\documentclass{amsart}%
\usepackage{amsfonts}
\usepackage{amsmath}
\usepackage{amssymb}
\usepackage{graphicx}
\usepackage{xcolor}
\usepackage[colorlinks, citecolor = blue, pagebackref]{hyperref}%
\setcounter{MaxMatrixCols}{30}

%\documentclass[12pt]{article}

%\usepackage[cp1251]{inputenc}
%\usepackage[russian]{babel}
%\usepackage{color}

%\usepackage{amsmath,amsthm,amsfonts,amssymb}
%\usepackage{array}

%\Large

\pagestyle{plain} 
\headheight=25mm 
\headsep=-15mm 
\oddsidemargin=1mm
\evensidemargin=1mm 
\textheight=200mm 
\textwidth=154mm
\theoremstyle{plain}
\newtheorem{theorem}{Theorem}
\theoremstyle{plain}

\theoremstyle{plain}
\newtheorem{lemma}{Lemma}
\theoremstyle{plain}
\newtheorem{cor}{Corollary}
\theoremstyle{definition}
\newtheorem{defin}{Definition}
\theoremstyle{remark}
\newtheorem{remark}{Remark}
\theoremstyle{remark}

\theoremstyle{remark}

\normalbaselineskip=12pt \baselineskip=20pt \normalbaselines

\begin{document}

\Large
\title[Random unconditional convergence of Rademacher chaos in $L_\infty$.]{Random unconditional convergence of Rademacher chaos in $L_\infty$ and sharp estimates for discrepancy of weighted graphs and hypergraphs.\\}
\author[S.V. Astashkin]{Sergey V. Astashkin}
\address[S.V. Astashkin]{Department of Mathematics, Samara National Research University, Moskovskoye shosse 34, 443086, Samara, Russia; Lomonosov Moscow State University, Moscow, Russia; Moscow Center of Fundamental and Applied Mathematics, Moscow, Russia; Department of Mathematics, Bahcesehir University, 34353, Istanbul, Turkey.}
\email{astash56@mail.ru}
\urladdr{www.mathnet.ru/rus/person/8713}

\author[K.V. Lykov]{Konstantin V. Lykov}
\address[K.V. Lykov]{Institute of Mathematics of the National Academy of Sciences of Belarus, Minsk, Belarus; Belarusian State University, Minsk, Belarus}
\email{alkv@list.ru}
\thanks{The work of the first named author was performed at Lomonosov Moscow
State University and was supported by the Russian Science
Foundation, project no. 23-71-30001. The work of the second named author was supported by the State Research Programme ''Convergence-2025''\;of the National Academy of Sciences of Belarus. }
\date{}
\subjclass{Primary 46B09; Secondary 05C15, 05C35,46E30}
\keywords{graph, hypergraph, discrepancy, edge-weighted graph, edge-weighted hypergraph, Rademacher functions, Rademacher chaos, multiple Rademacher system, random unconditional convergence, matrix norm, cut-norm, random matrix.}

\begin{abstract}
We prove that both multiple Rademacher system and Rademacher chaos possess the property of random unconditional convergence in the space $L_\infty$. This fact combined with some intimate connections between $L_\infty$-norms of linear combinations of elements of these systems and some special norms of matrices of their coefficients allows us to establish sharp two-sided estimates for the discrepancy of edge-weighted graphs and hypergraphs. Some of these results extend the classical theorem proved by Erd\"os and Spencer for the unweighted case.
\end{abstract}
\maketitle

%\begin{center}
%{\bf Random unconditional convergence of Rademacher chaos in $L_\infty$ and sharp estimates for discrepancy of weighted graphs and hyper graphs.\\[5mm]}
%\end{center}

%\begin{center}
%{\bf С.В. Асташкин\footnote[1]{Работа выполнена в рамках реализации программы развития Научно-образовательного математического центра Приволжского федерального округа, соглашение № 075-02-2023-931.}, К.В. Лыков\footnote[1]{Работа была поддержана НАН Беларуси в рамках ГПНИ “Конвергенция-2025” }\\[5mm]}
%\end{center}

%{\bf Ключевые слова}: функции Радемахера, хаос Радемахера, случайная безусловная сходимость, уклонение, граф, взвешенный гиперграф, матричные нормы, случайные матрицы.

%{\bf Key words}: Rademacher functions, Rademacher chaos, random unconditional convergence, discrepancy, graph, weighted hypergraph, matrix norms, random matrices.

%{\bf Аннотация}: Исследуются свойства 

%\begin{center}
%Асташкин С.В.\footnote{Самарский национальный исследовательский университет имени академика С. П. Королева, Московское шоссе, 34, 443086 Самара, Россия}, Лыков К.В.
%\footnote{
%Институт систем обработки изображений РАН - филиал Федерального государственного учреждения "Федеральный научно-исследовательский центр "Кристаллография и фотоника"\;Российской академии наук"\,, ул. Молодогвардейская, 151, 443001 Самара, Россия; 
%e-mail: alkv@list.ru}
%}
%\end{center}

\section{Introduction}
\label{Intro}

The discrepancy (or imbalance) of a hypergraph $H=(V,E)$ ($V$ is the vertex set and $E$ is the edge set) is defined by the formula
%{\color{red}
%$$
%$\mathrm{disc}(H_{n,d})=\min_{\theta}\max_{V'\subset V}\Bigl|\sum_{e\in E,\,e\subset V'}\theta(e)\Bigr |,
%$$
%}
%{\color{blue}
$$
\mathrm{disc}(H)=\min_{\theta}\max_{V'\subset V}\Bigl|\sum_{e\in E,\,e\subset V'}\theta(e)\Bigr |,
$$
where the minimum is taken over all functions (colorings) $\theta:\,E\to\{-1,1\}$.
In 1971, Erd\"os and Spencer proved for the complete $d$-homogeneous  hypergraph $H_{n,d}$ with $n$ vertices ($d\le n$) the following estimates:
%have introduced the notion of discrepancy (or imbalance) of the complete $d$-homogeneous  hypergraph $H_{n,d}=(V,E)$, $d\le n$, with $n$ vertices by the formula
%$$
%\mathrm{disc}(H_{n,d})=\min_{\theta}\max_{V'\subset V}\Bigl|\sum_{e\in E,\,e\subset V'}\theta(e)\Bigr |,
%$$
%where the minimum is taking over all functions (colorings) $\theta:\,E\to\{-1,1\}$. In the same classical paper \cite{ErSp}, they proved for this quantity the following estimates:
\begin{equation}
\label{Erd-Sp intr}
c_d n^{\frac{d+1}{2}}\leqslant \mathrm{disc}(H_{n,d})\leqslant C_d n^{\frac{d+1}{2}},
\end{equation}
with some constants $c_d$ and $C_d$ independent of $n$ \cite{ErSp}. This result marked the emergence of a new branch of the extremal graph theory, which is concerned with estimation of the discrepancy and its analogues for graphs and hypergraphs and which is rapidly developing at the present time (see, for instance,  \cite{AlonSpencer} or \cite{RCh}).

In this paper, among other results, we extend estimates \eqref{Erd-Sp intr} to a more general setting of weighted hypergraphs. It is worth to note that this generalization can be useful in solving various problems of Graph theory, because such weights might represent, for example, costs, lengths, capacities, depending on the problem at hand.

Let us assign to each edge $e\in E$ of a hypergraph $H=(V,E)$ a weight $w(e)\in\mathbb{R}$. The discrepancy of the resulting weighted hypergraph $H(W)$, where $W=\{w(e)\}_{e\in E}$, is defined as follows
$$
\mathrm{disc}(H(W))=\min_{\theta}\max_{V'\subset V}\Bigl|\sum_{e\in E,\,e\subset V'}\theta(e)w(e)\Bigr|.
$$
Here, we show that for every $d\in\mathbb{N}$ there exist constants $c_d'$ and $C_d'$ such that for each $d$-homogeneous hypergraph $H=(V,E)$ and any set of weights $W$ it holds
% with $n$ vertices, $2\le d\le n$, there exist constants $c_d'$ and $C_d'$ independent of $n$ and the set of weights $W$ such that
\begin{equation}
\label{extension of Erd-Sp intr}
c_d' \cdot \sum_{v\in V}\Bigl(\sum_{e\in E:\,v\in e}w(e)^2\Bigr)^{1/2}\leqslant \mathrm{disc}(H(W))\leqslant  C_d' \cdot\sum_{v\in V}\Bigl(\sum_{e\in E:\,v\in e}w(e)^2\Bigr)^{1/2}
\end{equation}
(see Theorem \ref{theor_weighted_hypergraph} below). Observe that in the special case when $H=H_{n,d}$ and $w(e)\equiv 1$ these estimates imply the above inequalities 
\eqref{Erd-Sp intr}. Moreover, discrepancy $\mathrm{disc}(H(W))$ is equivalent (up to constants depending only on $d$) to the expectation 
$$
\mathsf{E}_\theta  \max_{V'\subset V}\Bigl|\sum_{e\in E,\,e\subset V'}\theta(e)w(e)\Bigr|,$$
which is taken over all colorings $\theta$.

%{\color{blue} Then there exist constants $c_d'$ and $C_d'$, independent of $n$ and the set of weights $W$, such that
%\begin{equation}
%\label{extension of Erd-Sp intr}
%c_d' \cdot \sum_{v\in V}\Bigl(\sum_{e\in E:\,v\in e}w(e)^2\Bigr)^{1/2}\leqslant \mathrm{disc}(H(W))\leqslant  C_d' \cdot\sum_{v\in V}\Bigl(\sum_{e\in E:\,v\in e}w(e)^2\Bigr)^{1/2}
%\end{equation}
%(see Theorem \ref{theor_weighted_hypergraph} below). Observe that in the special case when $H=H_{n,d}$ and $w(e)\equiv 1$ these estimates imply the above inequalities 
%\eqref{Erd-Sp intr}. Observe additionally that discrepancy $\mathrm{disc}(H(W))$ is equivalent (up to universal constants) to the expectation 
%$$
%\mathsf{E}_\theta  \max_{V'\subset V}\Bigl|\sum_{e\in E,\,e\subset V'}\theta(e)w(e)\Bigr|$$
%over all colorings $\theta$.}

The main tools we use to prove equivalence \eqref{extension of Erd-Sp intr} and some other results are in fact beyond Graph theory.  Being mostly of functional-analytic and probabilistic nature, they are interesting we hope in their own right. More precisely, these tools are related to the specific properties of systems built on the basis of the classical Rademacher system (or, in other words, a sequence of independent, identically and symmetrically distributed random variables taking on values $\pm 1$). Let us describe briefly them. 

First of all, of key importance to us will be the fact that both multiple Rademacher system and Rademacher chaos possess the property of random unconditional convergence in the space $L_\infty$.
%\footnote{This concept was introduced by Billard, Kwapie\'n, Pe{\l}czy\'nski, and Samuel in the paper \cite{BKPS}.}.
Namely, focusing on the case of the second-order Rademacher chaos $\{r_ir_j\}_{1\le i<j\le n}$, for all $n\in\mathbb{N}$ and $a_{i,j}\in\mathbb{R}$, $1\leqslant i<j\leqslant n$, with universal constants we have
\begin{eqnarray}
\mathsf{E}\,_\theta\Bigl\|\sum_{i=1}^n\sum_{j=i+1}^n \theta_{i,j}a_{i,j}r_i r_j\Bigr\|_{L_\infty([0,1])}&\asymp&
\min_{\theta_{i,j}=\pm 1}{\Big\|\sum_{i=1}^n\sum_{j=i+1}^n \theta_{i,j}a_{i,j}r_i r_j\Big\|}_{L_\infty([0,1])}\nonumber\\
&\asymp&\max\Big\{\sum_{i=1}^{n-1}\Big(\sum_{j=i+1}^n a_{i,j}^2\Big)^{1/2},\sum_{j=2}^n\Big(\sum_{i=1}^{j-1} a_{i,j}^2\Big)^{1/2}\Big\}, 
\label{ RUC iin intr}
\end{eqnarray}
where by $\mathsf{E}\,_\theta$ is meant the expectation over all arrangements of signs $\theta_{i,j}=\pm 1$, $1\le i<j\le n$ (see Theorem \ref{theor_chaos}). Recall that the Rademacher functions $r_i(t)$, $t\in[0,1]$, can be defined by the formula
$$
r_i(t):=(-1)^{[2^i t]},\quad i=1,2,\dots,
$$
where $[x]$ denotes the integer part of a real number $x$. 
%In other words, $\{r_i\}_{i=1}^\infty$ is a sequence of independent, symmetrically distributed random variables taking values $\pm 1$.

It is worth to note that in the unimodular case, that is, when $a_{i,j}= \pm 1$ equivalence \eqref{ RUC iin intr} has been known for a long time. In 1977, Bennett considered the set $\mathcal{A}$ of matrices $A=(a_{i,j})_{1\le i\le n,1\le j\le m}$, $a_{i,j}= \pm 1$, treating them as random operators acting from $\ell_p^m$ into $\ell_q^n$.
%\{\color{blue} studying the set $\mathcal{A}$, consisting of matrices of the form $A=(a_{i,j})_{1\le i\le n,1\le j\le m}$, $a_{i,j}= \pm 1$, interpreted as random operators}, 
He showed that, for every $1\le p,q\le\infty$, it holds
%{\color{red}$$
%\mathsf{E}_A\|A:\,\ell_p^m\to \ell_q^n\|\asymp \min_{A\in \mathcal{A}}\|A:\,\ell_p^m\to \ell_q^n\|$$}
%{\color{blue}
$$
\mathsf{E}_\mathcal{A}\|A:\,\ell_p^m\to \ell_q^n\|\asymp \min_{A\in \mathcal{A}}\|A:\,\ell_p^m\to \ell_q^n\|\asymp\max\bigl\{n^{\frac{1}{q}}m^{\max\{\frac{1}{2}-\frac{1}{p},0\}},m^{\frac{p-1}{p}}n^{\max\{\frac{1}{q}-\frac{1}{2},0\}}\bigr\}$$
with constants that depend only on $p$ and $q$ \cite{Bennett} (see also \cite{BGN-75}). In view of the representation of the norm 
$$
\Big\|\sum_{i=1}^n\sum_{j=1}^m a_{i,j}r_i\otimes r_j\Big\|_{L_\infty([0,1]^2)}$$ (here, $(r_i\otimes r_j)(u,v)=r_i(u)r_j(v), (u,v)\in [0,1]^2$) as the norm $\|A:\,\ell_\infty^m\to \ell_1^n\|$ of the operator, generated by the matrix $A=(a_{i,j})_{i\leqslant n, j \leqslant m}$ (see \eqref{eq_operator_norm} below), the latter equivalence combined with a decoupling argument (see Subsect. \ref{decoupling} below) implies estimates \eqref{ RUC iin intr} in this partial case. Later on, in other terms, the same result has been rediscovered in \cite[Theorems 5 and 6]{As1998}.

The second tool, which is also crucial for us, is the existence of intimate connections between $L_\infty$-norms of linear combinations of elements of the multiple Rademacher system (resp. of the Rademacher chaos) and some special norms of matrices of their coefficients (see Subsect.  \ref{Sec_matrix_norm}). This fact reveals, in particular, interesting geometric properties of the subspaces generated by these  systems in $L_\infty$.

Let us describe briefly the structure of the paper. In Sect. \ref{Preliminaries} we gather some definitions and auxiliary results we will use in the sequel. In particular, in Subsect. \ref{decoupling}, we recall the decoupling results that will allow us to transfer estimates, obtained first for the multiple Rademacher system, to the Rademacher chaos. In the next subsection, as was said above, we introduce some special matrix norms and show their connection with $L_\infty$-norms of Rademacher sums, corresponding to considered systems.

%We show here also that considered Raemacher sums are determined (up to equivalence) by special matrix norms.

Sections \ref{Sec_mult_Rad} --- \ref{Sec_mult_d_Rad} contain the main results of the paper. In the first two sections, we prove that both second-order multiple Rademacher system and Rademacher chaos have in $L_\infty$ the property of random unconditional convergence. Then, in Sect. \ref{Sec_mult_d_Rad}, similar results are obtained for these  systems of an arbitrary order.

Finally, in Sect. \ref{Sec_disc_edges_graph}, we deal with applications of the results obtained to estimation of the discrepancy of edge-weighted graphs and hypergraphs. In particular, we are able to find sharp (up to equivalence) two-sided estimates for any such graphs and  homogeneous hypergraphs (see \eqref{extension of Erd-Sp intr}).
 
In conclusion, we would like to say a few words about another possible application of the property of random unconditional convergence of the Rademacher chaos  in the space $L_\infty$
(which is not reflected however in this paper).

As is known (see \cite{Talagrand, Talagrand2}), the exchange interaction energy of magnetic ions in the Sherrington-Kirkpatrick spin glass model is described by the  formula 
\begin{equation}\label{spin_glasses}
E=\sum_{i<j} g_{i,j}\sigma_i\sigma_j,
\end{equation}
where the coefficients $g_{i,j}$ are random (this is caused by the non-uniform distribution of impurities in the material and the behaviour of conduction electrons) and $\sigma_i\in\{-1,1\}$. For fixed values of the coefficients $g_{i,j}$, we are interested in the minimum of the energy $E$ with respect to all possible  $\sigma_i=\pm 1$, as well as in identification of specific configurations of the above signs at which this minimum is achieved. The problem of finding the corresponding extreme values, which is equivalent to that of quadratic constrained binary optimization (QUBO), is both difficult and important. In particular, many discrete optimization problems arising in applications (e.g., the travelling salesman problem) are reduced to it by means of 0/1 integer programming \cite{Karp, KHG, ABC}. Note that this problem is partially solved physically by using an adiabatic quantum computer \cite{VMKG, ZBDE, GKH}. However, theoretical results on the extreme values of the energy $E$ (see \eqref{spin_glasses}) are still very useful, because they allow to optimize  expensive quantum computations. Coming back finally to the content of this paper, observe that the problem of finding the maximum of the modulus of quantity \eqref{spin_glasses} over all $\sigma_i$ 
(closely related to the above-mentioned one of minimization of $E$) coincides with that of computing the $L_\infty$-norm of the second-order Rademacher chaos.

\section{Preliminaries and some auxiliary resuts.}
\label{Preliminaries}

In what follows, any expression of the form $F_1\asymp F_2$ means that $cF_1\leqslant F_2\leqslant CF_1$ for some constants $c>0$ and $C>0$, which do not depend on all or a part of the arguments of the functions (quasi-norms) $F_1$ and $F_2$. %and it should be clear from the context which arguments are being discussed.
%Also, the sign $|\cdot|$ denotes the modulus of a number or a function, as well as the cardinality of a set, depending on the context. 
Also, as usual, $[n]:=\{1,2,\dots,n\}$ for $n\in\mathbb{N}$, $L_p:=L_p[0,1]$ for $1\le p\le\infty$ and $\|a\|_2:=(\sum_{j=1}^\infty a_j^2)^{1/2}$.
%Finally, for a sequence $a=(a_j)_{j=1}^\infty\in\ell_2$ we put $\|a\|_2:=(\sum_{j=1}^\infty a_j^2)^{1/2}$.

\subsection{Khintchine's inequality}\label{Sec_Khintchine}

According to the classical Khintchine inequality \cite{khint} (see also \cite[Theorem~6.2.3]{AK} or \cite[Theorem 1.3]{AsBook}), for every $1\le p< \infty$ there exists a constant $C_p$ such that for any sequence $a=(a_j)_{j=1}^\infty\in\ell_2$ we have
\begin{equation}
\label{basic}
\Big\|\sum_{j=1}^\infty
a_jr_j\Big\|_{L_p}\leqslant C_p{\|a\|}_2.
\end{equation}
It is well known that $C_p\leqslant \sqrt{p}$ (the exact value of this constant for every $p$ was found by  Haagerup \cite{Haag}). Moreover, in \cite{Szarek}, Szarek proved that for all $p\geqslant 1$ and $a_k\in\mathbb{R}$, $k=1,2,\dots$, 
\begin{equation}
\label{basic1}
\frac{1}{\sqrt{2}}{\|a\|}_2\leqslant\Big\|\sum_{j=1}^\infty
a_jr_j\Big\|_{L_p},
\end{equation}
and in the case $p=1$ the constant $1/{\sqrt{2}}$ is exact. 

%{\color{red} 
These inequalities have given rise to a large number of investigations finding numerous applications in various areas of analysis. 
%It is known that Khintchine proved inequality \eqref{basic}, ''pursuing the goal of clarifying the correct rate of convergence in E. Borel's strong law of large numbers'' (see  \cite{PeshShir}). At the same time, 
In particular, inequalities \eqref{basic} and \eqref{basic1} show that each of the spaces $L_p[0,1]$, $1\leqslant p<\infty$, although not being Hilbert for $p\neq 2$, contains a subspace isomorphic to $\ell_2$. On the other hand, by the definition of the Rademacher functions, for every $n\in\mathbb{N}$ and $\theta_k=\pm 1$, $k=1,\dots,n$, there is $t\in [0,1]$ such that $r_k(t)=\theta_k$, $k=1,\dots,n$. Hence, we see that the $L_p$-norms of Rademacer sums "make a jump"\:when $p\to\infty$, because for every $a_k\in\mathbb{R}$ it holds
\begin{equation*}
\Big\|\sum_{k=1}^n a_kr_k\Big\|_{L_\infty}=\sum_{k=1}^n|a_k|,\;\;n\in\mathbb{N}.
\end{equation*}
This fact has motivated, in particular, investigations of the behaviour of Rademacher sums in the more general setting of rearrangement invariant spaces\footnote{For the definition of rearrangement invariant spaces and detailed information related to them we refer to the books \cite{LT} and \cite{KPS}.}. 
The problem of finding conditions, under which the sequence $\{r_j\}_{j=1}^\infty$ is equivalent in an rearrangement invariant space $X$ to the canonical basis in $\ell_2$, was resolved by Rodin and Semenov \cite{RS} (see also \cite[Theorem~2.b.4]{LT} or \cite[Theorem~2.3]{AsBook}). Namely, they proved that this holds if and only if $X$ contains the separable part of the exponential Orlicz space $\mathrm{Exp}L^{2}$ generated by the function $N_2(u)=e^{u^{2}}-1$\footnote{This space consists of all measurable functions $x(u)$ on $[0,1]$ such that $\lim_{t\to +0}x^*(t)\log^{-1/2}(2/t)=0$, where $x^*(t)$ is the nondecreasing rearrangement of $|x(u)|$.}.
% В работе \cite{As1998} аналогичный вопрос изучался для системы $\{r_{j_1}(t)\cdot r_{j_2}(t)\}_{j_1>j_2}$ произведений функций Радемахера, именуемой хаосом Радемахера второго порядка. Там было показано, что эта система эквивалентна в $X$ каноническому базису в $\ell_2$ тогда и только тогда, когда $X$ decreasing rearrangement of $|x|$содержит сепарабельную часть пространства Орлича $\mathrm{Exp}L$, построенного по функции $N_1(u)=e^u-1$. Кроме того, оба последних свойства оказались равносильными формально более слабому (чем эквивалентность каноническому базису в $\ell_2$)  свойству безусловной базисности системы $\{r_{j_1}(t)\cdot r_{j_2}(t)\}_{j_1>j_2}$ в $X$ \cite{As2000}. Отметим, что сама система Радемахера является безусловной (и даже симметричной с константой 1) базисной последовательностью в любом симметричном пространстве \cite[предложение~2.2]{AsBook}. 

\subsection{Systems of random unconditional convergence in Banach spaces.}

Recall that a sequence $\{x_k\}_{k=1}^\infty$ of elements of a Banach space X
is called {\it basic} if it is a basis in its closed linear span. A sequence $\{x_k\}_{k=1}^\infty$ is said to be an unconditional basic sequence if the sequence $\{x_{\pi(k)}\}_{k=1}^\infty$ is basic in $X$ for any permutation $\pi$ of the set of positive integers. It is well known that a basic sequence $\{x_k\}_{k=1}^\infty$  is unconditional in the Banach space $X$ if and only if multiplication by signs does not affect (up to a constant) the norm of a linear combination of elements of this sequence, that is, there exists a constant $D > 0$ such that, for every $n\in\mathbb{N}$, any signs $\theta_k=\pm1$ and all $a_k\in\mathbb{R}$, it holds 
$$
\Big\|\sum_{k=1}^n\theta_ka_kx_k\Big\|_X\leqslant D\Big\|\sum_{k=1}^na_kx_k\Big\|_X.
$$

It is clear that the Rademacher system is an unconditional sequence in every rearrangement invariant space.  More precisely,  for each $n\in\mathbb{N}$ and all $\theta_{k}=\pm 1$ the sequences $\{r_k\}_{k=1}^n$ and $\{\theta_{k}r_k\}_{k=1}^n$ are identically distributed on $[0,1]$. Therefore, if $X$ is an rearrangement invariant space, then for each $n\in\mathbb{N}$ and all $a_k\in\mathbb{R}$, $k=1,\dots,n$, we have 
$$
\Big\|\sum\nolimits_{k=1}^n\theta_ka_kr_k\Big\|_X=\Big\|\sum\nolimits_{k=1}^na_kr_k\Big\|_X.
$$

A detailed account of properties of basic and unconditional basic sequences can be found, for instance, in the books \cite{AK,KashinSaakyan,Bra}.

The next notion is a natural relaxation of the previous one.
%определения безусловной базисной последовательности. 

\begin{defin} 
%(см. \cite{LAT}) 
A basic sequence $\{x_k\}_{k=1}^\infty$ in a Banach space $X$ is called a {\it system of random unconditional convergence} (in brief, a RUC system) whenever there exists a constant $D>0$ such that for any $n\in\mathbb{N}$ and $a_k\in\mathbb{R}$, $k=1,2,\ldots,n,$ we have
$$
\mathsf{E}_\theta\Big\|\sum_{k=1}^n\theta_ka_kx_k\Big\|_X\leqslant D\Big\|\sum_{k=1}^na_kx_k\Big\|_X,
$$
where the expectation is taken over all $\theta_k=\pm 1$, $k=1,2,\dots,n$.
%{\it RUC} системой будем называть $D$-RUC систему с некоторой константой $D$, точное значение которой для нас не важно.
\end{defin}

%Обозначение RUC является аббревиатурой выражения "Random Unconditional Convergence". Понятие RUC системы было введено в работе \cite{BKPS}; там же были доказаны многие ва
This concept was introduced by Billard, Kwapie\'n, Pe{\l}czy\'nski, and Samuel in \cite{BKPS}. This paper contains also a number of interesting results related to the RUC-property of systems. Subsequently, this research was continued by many authors studied the behaviour of RUC systems in various function spaces (see e.g. \cite{wojtaszczyk86,garling-tomczak-jaegermann,dodds-semenov-sukochev, LAT, astashkin-curbera-tikhomirov,astashkin-curbera}).

\begin{remark} 
One can easily check that for each $n\in\mathbb{N}$, all $\eta_{k}=\pm 1$ and $a_k\in\mathbb{R}$, $k=1,\dots,n$,
$$
\mathsf{E}_\theta\Big\|\sum_{k=1}^n \theta_{k}\eta_ka_kx_k\Big\|_X=
\mathsf{E}_\theta\Big\|\sum_{k=1}^n \theta_ka_kx_k\Big\|_X.
$$
 Thus, $\{x_k\}_{k=1}^\infty$ is a RUC system in a Banach space $X$ if and only if for some constants independent of $n\in\mathbb{N}$ and $a_k\in\mathbb{R}$, $k=1,\dots,n$, 
$$
\mathsf{E}_\theta\Big\|\sum_{k=1}^n \theta_ka_kx_k\Big\|_X\asymp \min_{\theta_{k}=\pm 1}\Big\|\sum_{k=1}^n\theta_{k}a_kx_k\Big\|_X.
$$ 
\end{remark}

\subsection{Multiple Rademacher system, Rademacher chaos and decoupling.}
\label{decoupling}    
$\;$
%{\color{red} Let $d\in\mathbb{N}$. For every multi-index $\vec{j}=(j_1,j_2,\ldots,j_d)\in  \mathbb{N}^d$ we set 
%$$\mathbf{r}_{\vec{j}}^\otimes (t_1,\dots,t_d):=(r_{j_1}\otimes\ldots\otimes r_{j_d})(t_1,\dots,t_d):=r_{j_1}(t_1)\cdot\ldots\cdot r_{j_d}(t_d),\;\;(t_1,\dots,t_d)\in [0,1]^d.$$}
Let $d\in\mathbb{N}$ and let
$$
\mathbb{N}^d:=\{{\vec{j}}=(j_1,j_{2},\ldots,j_d):\, j_k\in\mathbb{N},\, k=1,2,\ldots,d \}.
$$
Further, for each ${\vec{j}}=(j_1,j_{2},\ldots,j_d)\in\mathbb{N}^d$ we set
$$
{\rm r}_{{\vec{j}}}^{\otimes}(\vec{t})=r_{j_1}(t_1)\cdot r_{j_2}(t_2)\cdot\dots\cdot r_{j_d}(t_d),\;\;\mbox{where}\;\vec{t}=(t_1,t_2,\dots,t_d)\in [0,1]^d.$$
Clearly, the sequence $\{\mathbf{r}_{\vec{j}}^\otimes\}_{\vec{j}\in \mathbb{N}^d}$ (which is referred as {\it the multiple Rademacher system of order $d$}) is  
uniformly bounded and orthonormal on the cube $[0,1]^d$. Moreover, ordered in a certain natural way this system is basic in any rearrangement invariant space on $[0,1]$ \cite[Theorem~2]{AL_AA}, and it satisfies for every $1\le p<\infty$, by the well-known Bonami result (see \cite{Bonami70} or \cite[Chapter VII, Exercize 32]{Blei}), the following Khintchine's type estimates:
\begin{equation}
\label{Bonami}
A_{p,d}\|(a_{\vec{\jmath}})_{{\vec{\jmath}}\in \mathbb{N}^d}\|_{{\ell^2}}
\leqslant \Big\|\sum_{\vec{\jmath}\in \mathbb{N}^d}a_{\vec{\jmath}}r_{\vec{\jmath}}^\otimes\Big\|_{L_p([0,1]^d)}\leqslant B_{p,d}\|(a_{\vec{\jmath}})_{{\vec{\jmath}}\in \mathbb{N}^d}\|_{{\ell^2}},
\end{equation}
where the constants $A_{p,d}$ and $B_{p,d}$ depend only on $d$ and $p$. 

In the more general setting, it is known (see \cite[Theorem 6]{AL_AA} and  earlier versions of this result in \cite{As2000}) that, for an rearrangement invariant space $X$,  the following three conditions are equivalent: 

(i) the sequence $\{\mathbf{r}_{\vec{j}}^\otimes\}_{\vec{j}\in \mathbb{N}^d}$ is equivalent in $X$ to the canonical basis in $\ell_2$; 

(ii) $\{\mathbf{r}_{\vec{j}}^\otimes\}_{\vec{j}\in \mathbb{N}^d}$ is an unconditional sequence in $X$; 

(iii) $X\supset G_{2/d}$. 

Here, $G_{r}$ is the separable part of the exponential Orlicz space 
$\mathrm{Exp}L^{r}$ generated by an Orlicz function $N_r(u)$ equivalent to the function $e^{u^{r}}$ for large $u>0$ (see footnote in Subsect. \ref{Sec_Khintchine}).
%(see \cite{AL_AA-24} and  earlier versions of this result in \cite{As2000} and \cite{AL_AA}). 
Moreover, each uniformly bounded orthonormal system (and hence the multiple Rademacher system) has the RUC property in an rearrangement invariant space $X$ such that $X\supset \mathrm{Exp}L^2$ \cite[Proposition 2]{AL_AA-24}.

Recall now the definition of the Rademacher chaos.  For each $d\in\mathbb{N}$ we introduce the following notation:
%{\color{red} 
%$$
%\bigtriangleup^d:=\{(j_1,j_2,\ldots,j_d)\in\mathbb{N}^d:\;j_1>j_2>\ldots>j_d\}.$$}
%{\color{blue} 
$$
\Delta^d:=\{(j_1,j_2,\ldots,j_d)\in\mathbb{N}^d:\;j_1<j_2<\ldots<j_d\}.$$
 The {\it (homogeneous) Rademacher chaos of order $d$} consists of all functions of the form
$$\mathbf{r}_{{{{\vec{j}}}}}(t):=r_{j_1}(t)\cdot r_{j_2}(t)\cdot\ldots\cdot r_{j_d}(t),\;\;t\in [0,1],$$ where ${{{\vec{j}}}}=(j_1,j_2,\ldots,j_d)\in \Delta^d$. As the multiple Rademacher system, the system $\{\mathbf{r}_{\vec{j}}\}_{\vec{j}\in \Delta^d}$ when naturally ordered (see \cite[Theorem~2]{AL_AA}) is basic in any rearrangement invariant space on $[0,1]$ and satisfies two-sided $L_p$-estimates for $1\le p<\infty$ similar to \eqref{Bonami}.
%Then, $\{\mathbf{r}_{\vec{j}}\}_{{{{\vec{j}}}}\in \bigtriangleup^d}$ is a RUC sequence in $L_\infty(I)$.

The study of properties of the Rademacher chaos can be reduced often to that of similar ones for the multiple Rademacher system by applying the decoupling techniques. Further, we will repeatedly use the following result of such a sort, where for all $d,n\in\mathbb{N}$ we set
$$
\mathbb{N}_n^d:=\{(i_1,i_2,\ldots,i_d):\,i_k\in\{1,2,\ldots,n\},\,k=1,2,\ldots, d\},$$
and, if $d\le n$,
$$
\widetilde{\mathbb{N}_n^d}:=\{(i_1,i_2,\ldots,i_d)\in \mathbb{N}_n^d:\,i_p\neq i_q\;\mbox{if}\;p\neq q\}
$$ 
 (see \cite[Theorem 3.1.1]{DG}).

\begin{theorem}
\label{real theor_decoupling, general} 
Let $(\xi_1,\xi_2,\ldots,\xi_n)$ be a sequence of $n$ real-valued independent random variables and let $(\xi_1^k,\xi_2^k,\ldots,\xi_n^k)$, $k=1,2,\dots,d$, be $d$ independent copies of this sequence, $d\le n$. Assume also that, for each  $(i_1,\dots,i_d)\in \widetilde{\mathbb{N}_n^d}$, measurable functions $h_{i_1,\dots,i_d}:\,\mathbb{R}^d\to \mathbb{R}$ satisfy the condition: $\mathsf{E}(|h_{i_1,\dots,i_d}(\xi_{i_1},\xi_{i_2},\ldots,\xi_{i_d})|)<\infty$. Let $\Phi:\,[0,\infty)\to [0,\infty)$ be a convex nondecreasing function such that 
$$\mathsf{E}\Phi(|h_{i_1,\dots,i_d}(\xi_{i_1},\xi_{i_2},\ldots,\xi_{i_d})|)<\infty$$ for each $(i_1,\dots,i_d)\in \widetilde{\mathbb{N}_n^d}$. Then,
\begin{equation}
\label{main decoupling}
\mathsf{E}\Phi\Big(\Big|\sum_{(i_1,\dots,i_d)\in\widetilde{\mathbb{N}_n^d}}h_{i_1,\dots,i_d}(\xi_{i_1},\ldots,\xi_{i_d})\Big|\Big)\le \mathsf{E}\Phi\Big(C_d(1)\Big|\sum_{(i_1,\dots,i_d)\in\widetilde{\mathbb{N}_n^d}}h_{i_1,\dots,i_d}(\xi_{i_1}^1,\ldots,\xi_{i_d}^d)\Big|\Big),
\end{equation}
where $C_d(1):=2^d(d^d-1)((d-1)^{(d-1)}-1)\times\dots\times 3$.

If, moreover, the functions $h_{i_1,\dots,i_d}$ are symmetric in the sense that  for an arbitrary permutation $\pi$ of the set $\{1,2,\dots,d\}$, each $(i_1,\dots,i_d)\in \widetilde{\mathbb{N}_n^d}$ and all $t_i\in\mathbb{R}$, $i=1,\dots,d$, we have 
$$
h_{i_1,\dots,i_d}(t_1,\dots,t_d)=h_{i_{\pi(1)},\dots,i_{\pi(d)}}(t_{\pi(1)},\dots,t_{\pi(d)}),$$
then inequality \eqref{main decoupling} can be reversed:
$$
\mathsf{E}\Phi\left(C_d(2)^{-1}\Big|\sum_{(i_1,\dots,i_d)\in\widetilde{\mathbb{N}_n^d}}h_{i_1,\dots,i_d}(\xi_{i_1}^1,\ldots,\xi_{i_d}^d)\Big|\right)\le \mathsf{E}\Phi\Big(\Big|\sum_{(i_1,\dots,i_d)\in\widetilde{\mathbb{N}_n^d}}h_{i_1,\dots,i_d}(\xi_{i_1},\ldots,\xi_{i_d})\Big|\Big),$$
where $C_d(2):=2^{(2d-2)}(d-1)!$.
\end{theorem}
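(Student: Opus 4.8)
This statement is, in substance, \cite[Theorem 3.1.1]{DG}, and I would follow that line of argument. Both inequalities hinge on a single device: a randomization converting a sum in which all kernels share one copy of $(\xi_i)$ into a sum over a product of pairwise disjoint blocks of indices, together with the observation that on disjoint blocks the shared copy may be relabelled into independent copies without changing the distribution. For the first (decoupling) inequality I would take a random partition $(I_1,\dots,I_d)$ of $\{1,\dots,n\}$, each $i$ placed in $I_k$ with probability $1/d$ independently, and write $\mathsf{E}_I$ for the corresponding expectation. Since the tuples in $\widetilde{\mathbb{N}_n^d}$ have pairwise distinct entries, $\mathsf{E}_I(\mathbf 1_{\{i_1\in I_1\}}\cdots\mathbf 1_{\{i_d\in I_d\}})=d^{-d}$ for each of them, so that
$$
\sum_{(i_1,\dots,i_d)\in\widetilde{\mathbb{N}_n^d}}h_{i_1,\dots,i_d}(\xi_{i_1},\dots,\xi_{i_d})=d^{d}\,\mathsf{E}_I\Big(\sum_{i_1\in I_1,\dots,i_d\in I_d}h_{i_1,\dots,i_d}(\xi_{i_1},\dots,\xi_{i_d})\Big).
$$
Applying Jensen's inequality (convexity and monotonicity of $\Phi$ together with $|\mathsf{E}_I(\cdot)|\le\mathsf{E}_I|\cdot|$), then taking $\mathsf{E}$ and using Fubini, I would bound the left-hand side of \eqref{main decoupling} by $\mathsf{E}_I\mathsf{E}\,\Phi\big(d^{d}|\sum_{i_1\in I_1,\dots,i_d\in I_d}h_{i_1,\dots,i_d}(\xi_{i_1},\dots,\xi_{i_d})|\big)$; and for a fixed partition the sets $I_k$ are disjoint, so the variables $\{\xi_i:\,i\in I_k\}$ enter only the $k$-th slot of the kernels, whence the inner sum has the same distribution as $\sum_{i_1\in I_1,\dots,i_d\in I_d}h_{i_1,\dots,i_d}(\xi^1_{i_1},\dots,\xi^d_{i_d})$, a restriction of the fully decoupled statistic.

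It then remains to pass from this block-restricted decoupled statistic to the full one over $\widetilde{\mathbb{N}_n^d}$, which is the technical heart. I would argue by induction on $d$, decoupling one coordinate at a time: once coordinates $1,\dots,k-1$ have been replaced by independent copies, the partial statistic is, conditionally on those copies, a sum $\sum_{i_k}g_{i_k}(\xi^k_{i_k})$ of terms independent across distinct $i_k$; a centering step (replacing each $g_{i_k}$ by $g_{i_k}-\mathsf{E}\,g_{i_k}$, at the cost of a factor $2$ and of lower-order $U$-type statistics which the induction hypothesis absorbs) makes the "missing" terms conditionally mean zero, so the range of $i_k$ can be enlarged from a block to all admissible indices; summing over the $k^{k}-1$ non-degenerate block-placements at level $k$ produces the combinatorial factors. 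Careful bookkeeping of these $d$ steps yields exactly $C_d(1)=2^{d}\prod_{k=2}^{d}(k^{k}-1)$.

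For the reverse inequality the symmetry hypothesis is used essentially. Here I would introduce i.i.d. labels $\eta_1,\dots,\eta_n$ uniform on $\{1,\dots,d\}$, independent of everything, and form the glued sample $\widetilde\xi_i:=\xi^{\eta_i}_i$, which is distributed as $(\xi_i)_{i=1}^{n}$; hence $\sum_{(i_1,\dots,i_d)\in\widetilde{\mathbb{N}_n^d}}h_{i_1,\dots,i_d}(\widetilde\xi_{i_1},\dots,\widetilde\xi_{i_d})$ is distributed as the undecoupled statistic. For a tuple whose label pattern $(\eta_{i_1},\dots,\eta_{i_d})$ is a permutation of $(1,\dots,d)$, the symmetry of the kernels lets one rewrite $h_{i_1,\dots,i_d}(\xi^{\eta_{i_1}}_{i_1},\dots,\xi^{\eta_{i_d}}_{i_d})$ as a term of the fully decoupled statistic; the $d!$ orderings of a given $d$-set carrying a full label pattern all give the same value, so summing the corresponding terms and taking $\mathsf{E}_\eta$ returns $d!\,d^{-d}$ times the decoupled statistic. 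Controlling the residual label patterns (those with a repeated label) again by a centering argument, and applying Jensen in the opposite direction, leads to $C_d(2)=2^{2d-2}(d-1)!$.

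The main obstacle in both parts is the same: a sub-sum of a multilinear form is not controlled by the whole form under a general convex $\Phi$, so the naive "restrict, then enlarge" scheme fails outright; it is salvaged only by the centering reductions, which in turn generate lower-order $U$-type statistics to be handled inductively, and it is precisely the bookkeeping of that induction that produces the explicit (and rather large) constants $C_d(1)$ and $C_d(2)$. Everything else — the partition randomization, the label gluing, and the two applications of Jensen — is soft.
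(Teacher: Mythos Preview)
Your identification is exactly right: the paper does not prove this theorem at all but simply quotes it as \cite[Theorem~3.1.1]{DG}, so your proposal to invoke that reference is precisely the paper's ``proof.'' Your additional sketch of the de~la~Pe\~na--Gin\'e argument (random partition plus Jensen for the forward direction, label gluing plus symmetry for the reverse, with the inductive centering to pass between block-restricted and full decoupled statistics) is accurate and goes beyond what the paper supplies.
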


In particular, applying this theorem in the special case when $\Phi(t)=t^p$, $1\le p<\infty$, and $h_{i_1,\dots,i_d}$ are multilinear functions, we obtain the following result (see also \cite[Theorem 6.4.1]{KwW}). %Further, we will use the following result of such a sort which is an   immediate consequence of Theorem 3.1.1 from the book \cite{DG}.

\begin{cor}
\label{real theor_decoupling} 
Let $d,n\in\mathbb{N}$, $d\le n$, and let sequences of real-valued random variables $(\xi_1,\xi_2,\ldots,\xi_n)$ and $(\xi_1^k,\xi_2^k,\ldots,\xi_n^k)$, $k=1,2,\dots,d$, defined on probability spaces $\Omega$ and $\Omega_k$, respectively, satisfy the conditions of Theorem \ref{real theor_decoupling, general}. Suppose that for every permutation $\pi$ of the set $\{1,2,\dots,d\}$ and each $(j_1,\dots,j_d)\in \widetilde{\mathbb{N}_n^d}$, the following holds: $d_{j_1,\dots,j_d}=d_{j_{\pi(1)},\dots,j_{\pi(d)}}$.

Then, we have
%{\color{red}
%$$
%C_d(2)^{-1}\Big\|\sum_{(i_1,\dots,i_d)\in\widetilde{\mathbb{N}_n^d}}d_{i_1,\dots,i_d}\xi_{i_1}^1\ldots\xi_{i_d}^d\Big\|_{L_\infty([0,1]^d)}\le \Big\|\sum_{(i_1,\dots,i_d)\in\widetilde{\mathbb{N}_n^d}}d_{i_1,\dots,i_d}\xi_{i_1}\ldots\xi_{i_d})\Big\|_{L_\infty([0,1])}
%$$
%$$
%\le C_d(1)\Big\|\sum_{(i_1,\dots,i_d)\in\widetilde{\mathbb{N}_n^d}}d_{i_1,\dots,i_d}\xi_{i_1}^1\ldots\xi_{i_d}^d\Big\|_{L_\infty([0,1]^d)},
%$$}
%{\color{blue}
$$
C_d(2)^{-1}\Big\|\sum_{(i_1,\dots,i_d)\in\widetilde{\mathbb{N}_n^d}}d_{i_1,\dots,i_d}\xi_{i_1}^1\ldots\xi_{i_d}^d\Big\|_{L_\infty(\prod_{k=1}^d\Omega_k)}\le \Big\|\sum_{(i_1,\dots,i_d)\in\widetilde{\mathbb{N}_n^d}}d_{i_1,\dots,i_d}\xi_{i_1}\ldots\xi_{i_d})\Big\|_{L_\infty(\Omega)}
$$
$$
\le C_d(1)\Big\|\sum_{(i_1,\dots,i_d)\in\widetilde{\mathbb{N}_n^d}}d_{i_1,\dots,i_d}\xi_{i_1}^1\ldots\xi_{i_d}^d\Big\|_{L_\infty(\prod_{k=1}^d\Omega_k)},
$$
where $C_d(1)$ and $C_d(2)$ are the constants from Theorem \ref{real theor_decoupling, general}. 

\end{cor}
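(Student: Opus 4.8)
The plan is to obtain the corollary as a direct specialization of Theorem \ref{real theor_decoupling, general}. First I would fix $d,n\in\mathbb{N}$ with $d\le n$ and, for each $(i_1,\dots,i_d)\in\widetilde{\mathbb{N}_n^d}$, set $h_{i_1,\dots,i_d}(t_1,\dots,t_d):=d_{i_1,\dots,i_d}\,t_1t_2\cdots t_d$. These are multilinear, hence measurable, and the symmetry hypothesis $d_{j_1,\dots,j_d}=d_{j_{\pi(1)},\dots,j_{\pi(d)}}$ on the coefficients translates verbatim into the symmetry condition $h_{i_1,\dots,i_d}(t_1,\dots,t_d)=h_{i_{\pi(1)},\dots,i_{\pi(d)}}(t_{\pi(1)},\dots,t_{\pi(d)})$ required for the reverse inequality, since permuting the scalar product $t_1\cdots t_d$ does nothing while the index permutation acts on the coefficient.

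Next I would address the choice of $\Phi$. Although Theorem \ref{real theor_decoupling, general} is stated for a convex nondecreasing $\Phi:[0,\infty)\to[0,\infty)$ with the relevant integrability, the $L_\infty$-norm is not literally of the form $\mathsf{E}\Phi(|\cdot|)$; so the cleanest route is to apply the theorem with $\Phi(t)=t^p$ for each finite $p\ge 1$, yielding
\begin{equation*}
C_d(2)^{-1}\Big\|\sum_{(i_1,\dots,i_d)\in\widetilde{\mathbb{N}_n^d}}d_{i_1,\dots,i_d}\xi_{i_1}^1\cdots\xi_{i_d}^d\Big\|_{L_p}\le \Big\|\sum_{(i_1,\dots,i_d)\in\widetilde{\mathbb{N}_n^d}}d_{i_1,\dots,i_d}\xi_{i_1}\cdots\xi_{i_d}\Big\|_{L_p}\le C_d(1)\Big\|\sum_{(i_1,\dots,i_d)\in\widetilde{\mathbb{N}_n^d}}d_{i_1,\dots,i_d}\xi_{i_1}^1\cdots\xi_{i_d}^d\Big\|_{L_p},
\end{equation*}
with constants independent of $p$, and then letting $p\to\infty$. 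For this passage to the limit to be legitimate I need the sums in question to lie in $L_\infty$ of the respective (finite or, after truncation, effectively finite) product spaces; in the application to Rademacher variables on $[0,1]$ the sums are bounded functions, so $\|\cdot\|_{L_p}\uparrow\|\cdot\|_{L_\infty}$ by the standard monotone-limit fact, and the $p$-free constants $C_d(1)$, $C_d(2)$ survive intact. I would state this integrability as part of the hypotheses inherited from Theorem \ref{real theor_decoupling, general} (the condition $\mathsf{E}\Phi(|h_{i_1,\dots,i_d}(\xi_{i_1},\dots,\xi_{i_d})|)<\infty$ for all $p$ forces finiteness of all moments, and essential boundedness is exactly what we use).

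The main obstacle, such as it is, is purely a matter of bookkeeping: justifying the interchange of the supremum (the $L_\infty$-norm) with the decoupling inequality. One must be careful that the finite sum over $\widetilde{\mathbb{N}_n^d}$ is genuinely bounded so that $L_p$-norms increase to the $L_\infty$-norm, and that the constants $C_d(1)=2^d(d^d-1)((d-1)^{(d-1)}-1)\times\cdots\times 3$ and $C_d(2)=2^{(2d-2)}(d-1)!$ from Theorem \ref{real theor_decoupling, general} do not depend on $p$ — both of which are immediate from the statement of that theorem. Beyond that, the reverse inequality simply requires invoking the symmetry clause of Theorem \ref{real theor_decoupling, general}, which we have already verified holds for the multilinear $h$'s under the stated symmetry of the coefficients $d_{i_1,\dots,i_d}$. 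No further estimates are needed.
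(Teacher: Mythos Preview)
Your proposal is correct and follows essentially the same route as the paper: define $h_{i_1,\dots,i_d}(t_1,\dots,t_d)=d_{i_1,\dots,i_d}\,t_1\cdots t_d$, verify the symmetry condition, apply Theorem~\ref{real theor_decoupling, general} with $\Phi(t)=t^p$ for each $p\ge 1$, take $p$-th roots, and let $p\to\infty$. The paper's proof is slightly terser about the passage to the limit (it simply raises to the power $1/p$ and lets $p\to\infty$ without discussing boundedness), but the argument is the same.
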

\begin{proof}

Observe that the functions
$$
h_{j_1,j_2,\dots,j_d}(t_1,t_2,\dots,t_d):=d_{j_1,j_2\dots,j_d}\cdot t_1\cdot t_2\cdot\ldots\cdot t_d$$
are symmetric in the sense indicated in the second part of Theorem \ref{real theor_decoupling, general}. Indeed, by the assumption, for every permutation $\pi$ of the set $\{1,2,\dots,d\}$ and each $(i_1,\dots,i_d)\in \widetilde{\mathbb{N}_n^d}$, we have
\begin{eqnarray*}
h_{j_{\pi(1)},j_{\pi(2)},\dots,j_{\pi(d)}}(t_{\pi(1)},t_{\pi(2)},\dots,t_{\pi(d)})&=&
d_{j_{\pi(1)},j_{\pi(2)}\dots,j_{\pi(d)}}\cdot t_{\pi(1)}\cdot t_{\pi(2)}\cdot\ldots\cdot t_{\pi(d)}\\
&=&
d_{j_{1},j_{2}\dots,j_{d}}\cdot t_1\cdot t_2\cdot\ldots\cdot t_d\\
&=& h_{j_1,j_2,\dots,j_d}(t_1,t_2,\dots,t_d).
\end{eqnarray*}

Thus, from Theorem \ref{real theor_decoupling, general} it follows that, for every $p\ge 1$,
\begin{eqnarray*}
C_d(2)^{-p}\mathsf{E}\Big(\Big|\sum_{(i_1,\dots,i_d)\in\widetilde{\mathbb{N}_n^d}}d_{i_1,\dots,i_d}\xi_{i_1}^1\ldots\xi_{i_d}^d\Big|\Big)^p&\le& \mathsf{E}\Big(\Big|\sum_{(i_1,\dots,i_d)\in\widetilde{\mathbb{N}_n^d}}d_{i_1,\dots,i_d}\xi_{i_1}\ldots\xi_{i_d})\Big|\Big)^p\\
&\le& C_d(1)^p\mathsf{E}\Big(\Big|\sum_{(i_1,\dots,i_d)\in\widetilde{\mathbb{N}_n^d}}d_{i_1,\dots,i_d}\xi_{i_1}^1\ldots\xi_{i_d}^d\Big|\Big)^p.
\end{eqnarray*}
Raising now all terms of this inequality to the power $1/p$ and passing then to the limit as $p\to\infty$, we arrive at the desired estimate. This completes the proof.

\end{proof}

%\begin{cor}
%\label{real theor_decoupling} 
%Suppose a random vector $\xi=(\xi_1,\xi_2,\ldots,\xi_n)$ consists of independent and bounded components $\xi_i$, $i\in\overline{[1,n]}$, and $\eta=(\eta_1,\eta_2,\ldots \eta_n)$ is a vector that is identically distributed with $\xi$ and is independent of $\xi$. Then for every $p>1$ and any real symmetric matrix $B=(b_{i,j})$ we have
%$$
%\frac{1}{12^{p}}\mathsf{E}\Bigl|\sum_{i\neq j} b_{i,j}\xi_i\xi_j\Bigr|^p\leqslant \mathsf{E}\Bigl|\sum_{i\neq j} b_{i,j}\xi_i\eta_j\Bigr|^p\leqslant 4^p \mathsf{E}\Bigl|\sum_{i\neq j} b_{i,j}\xi_i\xi_j\Bigr|^p.
%$$
%Hence,
%$$
%\frac{1}{12}\Bigl\|\sum_{i\neq j} b_{i,j}\xi_i\xi_j\Bigr\|_\infty\leqslant \Bigl\|\sum_{i\neq j} b_{i,j}\xi_i\eta_j\Bigr\|_\infty\leqslant 4 \Bigl\|\sum_{i\neq j} b_{i,j}\xi_i\xi_j\Bigr\|_\infty.
%$$

%\end{cor}

\subsection{Connections between $L_\infty$-norms of Rademacher sums and special matrix norms}
\label{Sec_matrix_norm}

It is crucial for what follows that $L_\infty$-norms of linear combinations of  elements of both sequences $\{\mathbf{r}_{\vec{j}}^\otimes\}_{\vec{j}\in \mathbb{N}^d}$ and $\{\mathbf{r}_{{{{\vec{j}}}}}\}_{\vec{j}\in \Delta^d}$ are determined (up to equivalence) by some special norms of matrices of their coefficients. This fact refines some classical results, in particular, the well-known Littlewood  $4/3$-inequality \cite{Litt1}, giving new information on geometric properties of the subspaces of $L_\infty$ spanned by the above systems. 

We start with the second-order multiple Rademacher system.
%$$
%\sum_{i=1}^n\sum_{j=1}^m a_{i,j}r_i(u)r_j(v)
%$$
%в терминах специальных норм матрицы
%$$
%A=(a_{i,j})_{i\leqslant n, j \leqslant m}.
%$$
Since the set of extreme points of the unit ball of the space $\ell_\infty^n$ consists of the vectors $(\epsilon_1,\epsilon_2,\ldots,\epsilon_n)$, $\epsilon_k=\pm1$, $k=1,2,\dots,n$, we obtain
\begin{eqnarray}
\Bigl\|\sum_{i=1}^n\sum_{j=1}^m a_{i,j}r_i\otimes r_j\Bigr\|_{L_\infty([0,1]^2)}&=&\max_{\epsilon_i, \epsilon'_j=\pm 1} \Bigl | \sum_{i=1}^n\sum_{j=1}^m a_{i,j}\epsilon_i\epsilon'_j\Bigr|\nonumber\\
&=& \sup_{{\|x\|}_{\ell_\infty^n}={\|y\|}_{\ell_\infty^m}=1}\Bigl \{  \sum_{i=1}^n\sum_{j=1}^m a_{i,j}x_iy_j\Bigr\},
\label{bilinear}
\end{eqnarray}
where $x=(x_i)_{i=1}^n$, $y=(y_j)_{j=1}^m$.
Consequently, from the fact that $(\ell_1^n)^*=\ell_\infty^n$ it follows 
%Следовательно, так как сопряженным к пространству $\ell_\infty^n$ является пространство $\ell_1^n$,
\begin{equation}\label{eq_operator_norm}
\Bigl\|\sum_{i=1}^n\sum_{j=1}^m a_{i,j}r_i\otimes r_j\Bigr\|_{L_\infty([0,1]^2)}=\|A:\,\ell_\infty^m\to \ell_1^n\|,
\end{equation}
where the expression in the right-hand side denotes the norm of the operator $A:\,\ell_\infty^m\to \ell_1^n$, generated by the matrix $A=(a_{i,j})_{i\leqslant n, j \leqslant m}$.

An important role in the study of the efficiency of approximative and computational algorithms is played by the so-called cut-norm of a matrix $A=(a_{i,j})_{i\leqslant n,j\leqslant m}$ defined as follows:
\begin{equation}\label{def_cut_norm}
{\|A\|}_{cut}:=\max\Big\{\Big|\sum_{i\in I,j\in J}a_{i,j}\Big|:\,I\subset[n], J\subset [m]\Big\}
\end{equation}
(see e.g. \cite{AVKK} and its references, \cite{Alon-Naor}, \cite{Alon-15}).
As was shown by Alon and Naor \cite[Lemma 3.1]{Alon-Naor}, for every $n,m\in\mathbb{N}$ and any matrix $A=(a_{i,j})_{i\le n,j\le m}$, one has
\begin{equation}\label{cut_norm_matrix}
{\|A\|}_{cut}\leqslant \|A:\,\ell_\infty^m\to \ell_1^n\|\leqslant 4{\|A\|}_{cut},
\end{equation}
and hence, by \eqref{eq_operator_norm}, it follows that
\begin{equation}\label{eq_rad_cut_norm}
{\|A\|}_{cut}\le \Bigl\|\sum_{i=1}^n\sum_{j=1}^m a_{i,j}r_i\otimes r_j\Bigr\|_{L_\infty([0,1]^2)}\le 4 {\|A\|}_{cut}.
\end{equation}
%с константами, не зависящими от $m,n$ и $\{a_{i,j}\}$.

In the case of the Rademacher chaos, we have to deal with $L_\infty$-norms of the sums
\begin{equation}\label{first def}
S_n:=\sum_{i=1}^n\sum_{j=i+1}^n a_{i,j}r_ir_j,\;\;n\in\mathbb{N}.
\end{equation}
Observe that 
$$
S_n=\sum_{i=1}^n\sum_{j=1}^n d_{i,j}r_ir_j,
$$
where $d_{i,j}=d_{j,i}=a_{i,j}/2$ if $j>i$, and $d_{i,i}=0$. Then, applying Corollary \ref{real theor_decoupling} in the case $d=2$ together with equivalence \eqref{eq_rad_cut_norm}, we obtain
\begin{equation}
\label{first appl dec}
\|S_n\|_{L_\infty([0,1])}\asymp \Bigl\|\sum_{i=1}^n\sum_{j=1}^n d_{i,j}r_i\otimes r_j\Bigr\|_{L_\infty([0,1]^2)}\asymp {\|D\|}_{cut},
\end{equation}
where $D=(d_{i,j})_{1\leqslant i,j\leqslant n}$. 

Now, we intend to replace the norm ${\|D\|}_{cut}$ in \eqref{first appl dec} with some modified version of the cut-norm of the matrix $A$. To this end, consider  Bernoulli random variables $b_{i}(t):=(r_i(t)+1)/2$, $t\in [0,1]$. Then, for each $(s,t)\in [0,1]\times[0,1]$ we have
$$
\sum_{i=1}^n\sum_{j=1}^n d_{i,j}b_i(s)b_j(t)=\sum_{i\in I(s)}\sum_{j\in J(t)}d_{i,j},$$
with $I(s):=\{i:\,b_i(s)=1\}$ and $J(t):=\{j:\,b_j(t)=1\}$.
This relation combined with the fact that, for each arrangement of signs $(\eta_1,\eta_2,\dots,\eta_n)$ there is $t\in [0,1]$ such that $r_k(t)=\eta_k$ for all $k=1,\dots,n$, implies the equality 
\begin{equation}
\label{just equality}
{\|D\|}_{cut}=\Bigl\|\sum_{i=1}^n\sum_{j=1}^n d_{i,j}b_i\otimes b_j\Bigr\|_{L_\infty([0,1]^2)}.
\end{equation}

On the other hand, since for each $s\in [0,1]$
$$
\sum_{i=1}^n\sum_{j=i+1}^n a_{i,j}b_i(s)b_j(s)=\sum_{i,j\in I(s),\, i<j}a_{i,j},$$
%where $I(s):=\{i:\,b_i(t)=1\}$. Hence,
we have
\begin{equation}
\label{second equality}
\Bigl\|\sum_{i=1}^n\sum_{j=i+1}^n a_{i,j}b_ib_j \Bigr\|_{L_\infty}={\|A\|}_{cut}^*,
\end{equation}
where we put
\begin{equation}\label{def_square}
{\|A\|}_{cut}^*:=\max\Big\{\Big|\sum_{i,j\in I,\, i<j}a_{i,j}\Big|:\,I\subset [n]\Big\}.
\end{equation}

Next, one can easily see that
$$
\sum_{i=1}^n\sum_{j=i+1}^n a_{i,j}b_ib_j=\sum_{i=1}^n\sum_{j=1}^n d_{i,j}b_ib_j.$$
Hence, applying Corollary \ref{real theor_decoupling} once more, but now to the  random variables $b_{i}$, $i=1,2,\dots,n$, we obtain
$$
\Bigl\|\sum_{i=1}^n\sum_{j=i+1}^n a_{i,j}b_ib_j \Bigr\|_{L_\infty}
%=\Bigl\|\sum_{i=1}^n\sum_{j=i}^n d_{i,j}b_ib_j\Bigr\|_{L_\infty([0,1])}
\asymp \Bigl\|\sum_{i=1}^n\sum_{j=1}^n d_{i,j}b_i\otimes b_j \Bigr\|_{L_\infty([0,1]^2)}.$$
%$$
%\Bigl\|\sum_{i=1}^n\sum_{j=1}^n d_{i,j}b_i(u)b_j(v)\Bigr\|_{L_\infty([0,1]^2)}\asymp \Bigl\|\sum_{i=1}^n\sum_{j=i+1}^n a_{i,j}b_i(t)b_j(t)\Bigr\|_{L_\infty([0,1])}={\|A\|}_{\square},
%$$
Combining this together with \eqref{just equality} and \eqref{second equality}, we conclude that ${\|D\|}_{cut}\asymp {\|A\|}_{cut}^*$ with universal constants.
Therefore, by \eqref{first appl dec} and \eqref{first def}, we arrive at the equivalence
\begin{equation}\label{eq_square_cut_norm}
\Bigl\|\sum_{i=1}^n\sum_{j=i+1}^n a_{i,j}r_ir_j\Bigr\|_{L_\infty}\asymp {\|A\|}_{cut}^*,
\end{equation}
with constants independent of $n\in\mathbb{N}$ and $A=(a_{i,j})_{i\le n,j\le n}$. 

Later on, we will use equivalences \eqref{eq_rad_cut_norm} and  \eqref{eq_square_cut_norm} to obtain sharp (up to universal constants) estimates  for the discrepancy of edge-weighted graphs. To prove similar results for hypergraphs we will need analogous equivalences for the multiple Rademacher system and the Rademacher chaos of an arbitrary order.

We show first that for every $d\in\mathbb{N}$, $n_1,n_2,\ldots n_d\in\mathbb{N}$ and each $d$-dimensional array $A=(a_{i_1,i_2,\ldots , i_d})_{i_k\leqslant n_k,\,1\le k\le d}$ the following inequalities hold:
\begin{equation}\label{eq_rad_cut_norm_mult}
{\|A\|}_{cut}\leqslant\Bigl\|\sum_{i_1=1}^{n_1}\sum_{i_2=1}^{n_2}\ldots \sum_{i_d=1}^{n_d} a_{i_1,i_2\ldots, i_d} r_{i_1}\otimes r_{i_2}\otimes\ldots r_{i_d}\Bigr\|_{L_\infty([0,1]^d)}\leqslant 2^d{\|A\|}_{cut},
\end{equation}
where the cut-norm ${\|A\|}_{cut}$ is defined by
\begin{equation}\label{eq_rad_cut_norm_mult def}
{\|A\|}_{cut}:=\max\Bigl\{\Bigl|\sum_{i_1\in I_1}\sum_{i_2\in I_2}\ldots \sum_{i_d\in I_d} a_{i_1,i_2,\ldots, i_d}\Bigr |:\;I_k\subset [n_k],\, k=1,2,\ldots, d\Bigr\}.
\end{equation}

Denoting
$$
S_{n_1,n_2,\ldots n_d}(u_1,u_2,\dots,u_d):=\sum_{i_1=1}^{n_1}\sum_{i_2=1}^{n_2}\ldots \sum_{i_d=1}^{n_d} a_{i_1,i_2\ldots, i_d} r_{i_1}\otimes r_{i_2}\otimes\ldots r_{i_d}(u_1,u_2,\dots,u_d),$$
for any fixed $u_k\in [0,1]$, $k=1,2,\dots,d$, we have
%{\color{red}$$
%S_{n_1,\ldots n_d}(u_1,\dots,u_d):=\sum_{i_1\in I_1(u_1)}\sum_{i_2\in I_2(u_2)}\ldots \sum_{i_d\in I_d(u_d)} a_{i_1,i_2,\ldots, i_d}\epsilon_1 \epsilon_2\dots \epsilon_d,$$}
$$
S_{n_1,\ldots n_d}(u_1,\dots,u_d)=\sum_{\epsilon_k\in\{-1,1\}}\epsilon_1\cdot\dots \cdot\epsilon_d\sum_{i_1\in I_1(u_1,\epsilon_1)}\sum_{i_2\in I_2(u_2,\epsilon_1)}\ldots \sum_{i_d\in I_d(u_d,\epsilon_d)} a_{i_1,i_2,\ldots, i_d},$$
where 
%{\color{red}$\epsilon_k=\pm 1$ and $I_k(u_k):=\{i_k\in [n_k]:\,r_{i_k}(u_k)=\epsilon_k\}$} {\color{blue}  
$I_k(u_k,\epsilon_k):=\{i_k\in [n_k]:\,r_{i_k}(u_k)=\epsilon_k\}$, $k=1,2,\dots,d$.
From this formula and definition \eqref{eq_rad_cut_norm_mult def} of the norm ${\|A\|}_{cut}$ it follows immediately that
$$
|S_{n_1,n_2,\ldots n_d}(u_1,u_2,\dots,u_d)|\le 2^d {\|A\|}_{cut}$$
for all $u_k\in [0,1]$, $k=1,2,\dots,d$, which implies the right-hand side inequality in \eqref{eq_rad_cut_norm_mult}.

To prove the left-hand side inequality, observe that, similarly as in the bilinear case (see equality \eqref{bilinear}), we have
%{\color{red}
%$$
%\|S_{n_1,n_2,\ldots n_d}\|_{L_\infty([0,1]^d)}=\left\{\sum_{i_1=1}^{n_1}\sum_{i_2=1}^{n_2}\ldots \sum_{i_d=1}^{n_d} a_{i_1,i_2\ldots, i_d} x_{i_1}x_{i_2}\ldots x_{i_d}:\,x_{i_k}\in [-1,1]\right\}
%$$}
%{\color{blue}
$$
\|S_{n_1,n_2,\ldots n_d}\|_{L_\infty([0,1]^d)}=\max\left\{\sum_{i_1=1}^{n_1}\sum_{i_2=1}^{n_2}\ldots \sum_{i_d=1}^{n_d} a_{i_1,i_2\ldots, i_d} x_{i_1}x_{i_2}\ldots x_{i_d}:\,x_{i_k}\in [-1,1]\right\}
$$
(cf. \cite[Theorem I.14]{Blei}\footnote{This equality is referred in the book \cite{Blei} as the multilinear Fr\'{e}chet theorem.}). Therefore, for arbitrary sets $I_k\subset [n_k]$, $k=1,2,\ldots, d$, letting $x_{i_k}=1$ if $i_k\in I_k$ and $x_{i_k}=0$ if $i_k\not\in I_k$, we get
$$
\|S_{n_1,n_2,\ldots n_d}\|_{L_\infty([0,1]^d)}\geq\Bigl|\sum_{i_1\in I_1}\sum_{i_2\in I_2}\ldots \sum_{i_d\in I_d} a_{i_1,i_2,\ldots, i_d}\Bigr |.
$$
%{\color{blue} setting $x_{i_k}=1$ if $i_k\in I_k$, and $x_{i_k}=-1$ if $i_k\not\in I_k$, we get
%$$
%\Bigl|\sum_{i_1\in I_1}\sum_{i_2\in I_2}\ldots \sum_{i_d\in I_d} a_{i_1,i_2,\ldots, i_d}\Bigr |=\Bigl|\sum_{i_1\in I_1}\sum_{i_2\in I_2}\ldots \sum_{i_d\in I_d} a_{i_1,i_2,\ldots, i_d}\frac{x_1+1}{2}\cdot\frac{x_2+1}{2}\cdot\ldots\cdot\frac{x_d+1}{2} \Bigr |
%$$
%$$
%\leq 2^{-d}\sum_{B\in 2^{[d]}}\Bigl|\sum_{i_1\in I_1}\sum_{i_2\in I_2}\ldots \sum_{i_d\in I_d} a_{i_1,i_2,\ldots, i_d} x_{i_1}^{[1\in B]}x_{i_2}^{[2\in B]}\ldots x_{i_d}^{[d\in B]} \Bigr |\leq \|S_{n_1,n_2,\ldots n_d}\|_{L_\infty([0,1]^d)},
%$$
%where $[i\in B]=1$ if $i \in B$ and $[i\in B]=0$ otherwise (Iverson bracket).}
As a result, taking the supremum over all sets $I_k\subset [n_k]$, $k=1,2,\ldots, d$, we arrive at the left-hand side inequality in \eqref{eq_rad_cut_norm_mult}.

To deal with the Rademacher chaos, for every $d,n\in\mathbb{N}$, $d\le n$, and each array $A=(a_{i_1,i_2\ldots, i_d})_{1\le i_1<i_2<\dots< i_d\le n }$ we introduce the following modification of the cut-norm:
\begin{equation}\label{quadrat_mult}
{\|A\|}_{cut}^*:=\max\Bigl\{\Bigl|\sum_{i_1\in I}\sum_{i_2\in I,\,i_2>i_1}\ldots \sum_{i_d\in I,\,i_d>i_{d-1}} a_{i_1,i_2\ldots, i_d}\Bigr |:\;I\subset [n]\Bigr\}.
\end{equation}
Then,
 \begin{equation}\label{eq_square_cut_norm_mult}
\Bigl\|\sum_{i_1=1}^{n}\sum_{i_2=i_1+1}^{n}\ldots \sum_{i_d=i_{d-1}+1}^{n} a_{i_1,i_2\ldots, i_d} r_{i_1}r_{i_2}\ldots r_{i_d}\Bigr\|_{L_\infty}\asymp {\|A\|}_{cut}^*,
\end{equation}
with constants independent of $n\in\mathbb{N}$ and $A$.
% where
%\begin{equation}\label{quadrat_mult}
%{\|A\|}_{cut}^*:=\max\Bigl\{\Bigl|\sum_{i_1\in I}\sum_{i_2\in I,\,i_2>i_1}\ldots \sum_{i_d\in I,\,i_d>i_{d-1}} a_{i_1,i_2\ldots, i_d}\Bigr |:\;I\subset [n]\Bigr\}.
%\end{equation}

The proof of \eqref{eq_square_cut_norm_mult} can be carried out by the same scheme as that of relation \eqref{eq_square_cut_norm} for the second-order chaos. Indeed, let $(j_1,\dots,j_d)\in\mathbb{N}_n^d$. If indices $j_1,\dots,j_d$ are pairwise different, we set 
$$
d_{j_1,\dots,j_d}:=\frac1{d!}a_{j_{\sigma_1},\dots,j_{\sigma_d}},$$ where $\sigma_1,\dots,\sigma_d$ is the permutation of the set $[d]$ such that $j_{\sigma_1}<j_{\sigma_2}<\dots <j_{\sigma_d}$. Otherwise, we put $d_{j_1,\dots,j_d}=0$. Then, the coefficients $d_{j_1,\dots,j_d}$ satisfy the symmetry assumption of Corollary \ref{real theor_decoupling}. Therefore, applying this corollary to the Rademacher sequence $\{r_{i}\}_{i=1}^n$ and its independent copies $\{r_{i}(t_k)\}_{i=1}^n$, $k=1,\dots,d$, we obtain
$$
\Bigl\|\sum_{i_1=1}^{n}\ldots \sum_{i_d=1}^{n} d_{i_1,\ldots, i_d} r_{i_1}\ldots r_{i_d}\Bigl\|_{L_\infty}
\asymp\Bigl\|\sum_{i_1=1}^{n}\ldots \sum_{i_d=1}^{n} d_{i_1,\ldots, i_d} r_{i_1}\otimes\ldots \otimes r_{i_d}\Bigr\|_{L_\infty([0,1]^d)}.
$$
%$$
%h_{j_1,j_2,\dots,j_d}(t_1,t_2,\dots,t_d):=d_{j_1,j_2\dots,j_d}r_{j_1}(t_1)\cdot r_{j_2}(t_2)\cdot\ldots\cdot r_{j_d}(t_d)$$
%are symmetric in the sense that, for all $t_k\in [0,1]$, $k=1,\dots,d$, and any permutation $s_1,s_2,\dots,s_d$ of the numbers $1,2,\dots,d$, we have
%\begin{equation}
%\label{symm}
%h_{j_{s_1},j_{s_2},\dots,j_{s_d}}(t_{s_1},t_{s_2},\dots,t_{s_d})=h_{j_1,j_2,\dots,j_d}(t_1,t_2,\dots,t_d).
%\end{equation}
%Indeed, if $j_1,\dots,j_d$ are pairwise different, then
%\begin{eqnarray*}
%h_{j_{s_1},j_{s_2},\dots,j_{s_d}}(t_{s_1},t_{s_2},\dots,t_{s_d}) &=&b_{j_{s_1},j_{s_2},\dots,j_{s_d}}r_{j_{s_1}}(t_{s_1})\cdot r_{j_{s_2}}(t_{s_2})\cdot\ldots\cdot r_{j_{s_d}}(t_{s_d})\\&=&a_{j_{\sigma_1},\dots,j_{\sigma_d}}r_{j_1}(t_1)\cdot r_{j_2}(t_2)\cdot\ldots\cdot r_{j_d}(t_d),
%\end{eqnarray*}
%for the above permutation $\sigma_1,\dots,\sigma_d$, and
%$$
%h_{j_{s_1},j_{s_2},\dots,j_{s_d}}(t_{s_1},t_{s_2},\dots,t_{s_d}) =0,
%$$
%otherwise. Combining this together with the definition of the functions $h_{j_{1},j_{2},\dots,j_{d}}$, we obtain \eqref{symm}. Therefore, we can apply Corollary \ref{real theor_decoupling}. 
Thus, taking into account inequality \eqref{eq_rad_cut_norm_mult} for the array $D=(d_{j_1,\dots,j_d})_{(j_1,\dots,j_d)\in\mathbb{N}_n^d}$ and observing that, by the definition of coefficients $d_{i_1,\dots,i_d}$, it holds
$$
\sum_{i_1=1}^{n}\ldots \sum_{i_d=1}^{n} d_{i_1,i_2,\ldots, i_d} r_{i_1}r_{i_2}\ldots r_{i_d}=\sum_{i_1=1}^{n}\sum_{i_2=i_1+1}^{n}\ldots \sum_{i_d=i_{d-1}+1}^{n} a_{i_1,i_2\ldots, i_d} r_{i_1}r_{i_2}\ldots r_{i_d},$$
we conclude that
\begin{equation}
\label{first appl dec multiple}
%\|S_n\|_{L_\infty([0,1])}\asymp 
\Bigl\|\sum_{i_1=1}^{n}\sum_{i_2=i_1+1}^{n}\ldots \sum_{i_d=i_{d-1}+1}^{n} a_{i_1,i_2\ldots, i_d} r_{i_1}r_{i_2}\ldots r_{i_d}\Bigr\|_{L_\infty}
\asymp {\|D\|}_{cut}.
\end{equation}
%where $D=(d_{j_1,\dots,j_d})_{(j_1,\dots,j_d)\in\mathbb{N}_n^d}$. 

Finally, precisely as in the case of the second-order chaos, by using Bernoulli random variables $b_{i}(t):=(r_i(t)+1)/2$, $t\in [0,1]$, $i=1,\dots,n$, we can prove that ${\|D\|}_{cut}\asymp {\|A\|}_{cut}^*$ with universal constants. Hence, by \eqref{first appl dec multiple}, we obtain \eqref{eq_square_cut_norm_mult}.

%$$
%{\|S_{b,n}\|}_{L_p([0,1]^d)} =\Big\|\sum_{{\vec{j}}\in \mathbb{N}^{d}_n}b_{\vec{j}} {\rm r}_{{\vec{j}}}\Big\|_{L_p([0,1]^d)}\le C_4(d) \Bigl\|\sum_{{\vec{j}}\in \mathbb{N}^{d}_n}b_{\vec{j}}\mathbf{r}_{\vec{j}}\Bigr\|_{p},$$
%где $C_4(d)=2^{2d-2}(d-1)!$. Переходя к пределу при $p\to\infty$, отсюда получим
%$$
%{\|S_{b,n}\|}_{L_\infty([0,1]^d)} \le C_4(d) \Bigl\|\sum_{{\vec{j}}\in \mathbb{N}^{d}_n}b_{\vec{j}}\mathbf{r}_{\vec{j}}\Bigr\|_{\infty}.$$

%$И, наконец, снова используя \cite[теорема 3.1.1]{DG}, получим
%$$
%\Bigl\|\sum_{i_1=1}^{n}\ldots \sum_{i_d=1}^{n} b_{i_1,\ldots, i_d} r_{i_1}(u_1)\ldots r_{i_d}(u_d)\Bigr\|_{L_\infty([0,1]^d)}
%$$
%\begin{equation}\label{eq_decoupling_ineq}\asymp \Bigl\|\sum_{i_1=1}^{n}\ldots \sum_{i_d=1}^{n} b_{i_1,\ldots, i_d} r_{i_1}(t)\ldots r_{i_d}(t)\Bigr\|_{L_\infty([0,1])},
%\end{equation}
%при выполнении следующих условий на коэффициенты $b_{i_1,\ldots, i_d}$:
%$$
%b_{i_1i_2\ldots i_k}=b_{i_{\pi(1)}i_{\pi(2)}\ldots i_{\pi(k)}}
%$$
%для каждой перестановки $\pi$ на множестве $\{1,2,\ldots,d\}$, и, кроме этого, $b_{i_1,i_2\ldots, i_d}=0$ при совпадении хотя бы двух индексов $i_l=i_m$, $1\leqslant l<m\leqslant n$.

\section{RUC-property of the second-order multiple Rademacher system in $L_\infty$}\label{Sec_mult_Rad}

It is instructive to prove, first, the property of random unconditional convergence in $L_\infty$ for the {\it second-order} multiple Rademacher system. In this case the proof will be substantially relied on representation \eqref{eq_operator_norm} of the $L_\infty$-norm of a Rademacher sum $\sum_{i=1}^n\sum_{j=1}^m a_{i,j}r_i\otimes r_j$ as the operator norm from $\ell_\infty^m$  into $\ell_1^n$ of the matrix $A=(a_{i,j})_{i\le n,j\le m}$ 
and also on some results of the very recent paper \cite{APSS} by Adamczak, Prochno, Strzelecka, and Strzelecki. Further, in Section \ref{Sec_mult_d_Rad}, we will extend this result to the multiple Rademacher system of an arbitrary order.

\begin{theorem}\label{theor_bi_Rad}
With universal constants, for all $n,m\in\mathbb{N}$ and $a_{i,j}\in\mathbb{R}$, $1\leqslant i\leqslant n$, $1\leqslant j\leqslant m$, the following two-sided estimates hold:
\begin{eqnarray*}
\mathsf{E}_\theta\,\Bigl\|\sum_{i=1}^n\sum_{j=1}^m \theta_{i,j} a_{i,j}r_i\otimes r_j\Bigr\|_{L_\infty([0,1]^2)}&\asymp&
\min_{\theta_{i,j}=\pm 1}{\Big\|\sum_{i=1}^n\sum_{j=1}^m \theta_{i,j}a_{i,j}r_i\otimes r_j\Big\|}_{L_\infty([0,1]^2)}\\
&\asymp&\max\Big\{\sum_{i=1}^n\Big(\sum_{j=1}^m a_{i,j}^2\Big)^{1/2},\sum_{j=1}^m\Big(\sum_{i=1}^n a_{i,j}^2\Big)^{1/2}\Big\}. 
\end{eqnarray*}

%\begin{equation*}
%\int\limits_0^1 \Bigl\|\sum_{i=1}^n\sum_{j=1}^m r_{i,j}(\omega) a_{i,j}r_i(u)r_j(v)\Bigr\|_{L_\infty([0,1]^2)}\,d\omega\asymp \min_{\theta_{i,j}=\pm 1}{\Big\|\sum_{i=1}^n\sum_{j=1}^m \theta_{i,j}a_{i,j}r_i(u)r_j(v)\Big\|}_{L_\infty([0,1]^2)}
%\end{equation*}
%\begin{equation*}
%\asymp\max\Big\{\sum_{i=1}^n\Big(\sum_{j=1}^m a_{i,j}^2\Big)^{1/2},\sum_{j=1}^m\Big(\sum_{i=1}^n a_{i,j}^2\Big)^{1/2}\Big\}. 
%\end{equation*}
\end{theorem}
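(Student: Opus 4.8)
The plan is to pass to operator norms via \eqref{eq_operator_norm} and then to prove two separate one-sided estimates. Write $A_\theta:=(\theta_{i,j}a_{i,j})_{i\leqslant n,\,j\leqslant m}$ and
$$
R:=\max\Big\{\sum_{i=1}^n\Big(\sum_{j=1}^m a_{i,j}^2\Big)^{1/2},\ \sum_{j=1}^m\Big(\sum_{i=1}^n a_{i,j}^2\Big)^{1/2}\Big\}.
$$
By \eqref{eq_operator_norm} the three expressions in the statement are, in order, $\mathsf{E}_\theta\|A_\theta:\ell_\infty^m\to\ell_1^n\|$, $\min_\theta\|A_\theta:\ell_\infty^m\to\ell_1^n\|$ and $R$. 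Since an average never drops below the corresponding minimum, and since $R$ is unchanged by the substitution $a_{i,j}\mapsto\theta_{i,j}a_{i,j}$, it is enough to show that $\mathsf{E}_\theta\|A_\theta:\ell_\infty^m\to\ell_1^n\|$ is at most a universal constant multiple of $R$, and that $\|B:\ell_\infty^m\to\ell_1^n\|$ is at least a universal constant multiple of $R(B)$ for an arbitrary real $n\times m$ matrix $B$, where $R(B)$ is the quantity $R$ formed from the entries of $B$. Indeed, the second assertion applied to $B=A_\theta$ then bounds $\min_\theta\|A_\theta:\ell_\infty^m\to\ell_1^n\|$ from below by a constant multiple of $R$, the first bounds $\mathsf{E}_\theta\|A_\theta:\ell_\infty^m\to\ell_1^n\|$ from above by a constant multiple of $R$, and since $\min_\theta\|A_\theta:\ell_\infty^m\to\ell_1^n\|\leqslant\mathsf{E}_\theta\|A_\theta:\ell_\infty^m\to\ell_1^n\|$, all three quantities are equivalent.

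For the lower bound I would test the operator on sign vectors. For each $x\in\{-1,1\}^m$ one has $\|B:\ell_\infty^m\to\ell_1^n\|\geqslant\|Bx\|_{\ell_1^n}$; averaging over $x$, using that for fixed $i$ the variable $\sum_{j=1}^m b_{i,j}r_j$ is distributed as $\sum_{j=1}^m b_{i,j}x_j$ with $x$ uniform on $\{-1,1\}^m$, and invoking Szarek's lower Khintchine estimate \eqref{basic1}, one gets
$$
\big\|B:\ell_\infty^m\to\ell_1^n\big\|\ \geqslant\ \sum_{i=1}^n\mathsf{E}\Big|\sum_{j=1}^m b_{i,j}r_j\Big|\ \geqslant\ \frac1{\sqrt2}\sum_{i=1}^n\Big(\sum_{j=1}^m b_{i,j}^2\Big)^{1/2}.
$$
By the bilinear representation \eqref{bilinear} the norm $\|B:\ell_\infty^m\to\ell_1^n\|$ is invariant under transposition of $B$, so the same argument applied to $B^{\top}$ yields the analogous inequality with the roles of rows and columns exchanged. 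Taking the larger of the two lower bounds gives $\|B:\ell_\infty^m\to\ell_1^n\|\geqslant\frac1{\sqrt2}R(B)$, which is the required lower estimate.

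For the upper bound I would first replace the signs $\theta_{i,j}$ by independent standard Gaussian variables $g_{i,j}$. Viewing the matrices $A_\theta$ as elements of the Banach space of $n\times m$ matrices equipped with the norm $\|\cdot:\ell_\infty^m\to\ell_1^n\|$, the classical domination of Rademacher averages by Gaussian averages (valid in every Banach space) gives
$$
\mathsf{E}_\theta\big\|A_\theta:\ell_\infty^m\to\ell_1^n\big\|\ \leqslant\ \sqrt{\pi/2}\;\mathsf{E}_g\big\|(g_{i,j}a_{i,j})_{i\leqslant n,\,j\leqslant m}:\ell_\infty^m\to\ell_1^n\big\|.
$$
It then remains to bound the expected $\ell_\infty^m\to\ell_1^n$ operator norm of the structured Gaussian matrix $(g_{i,j}a_{i,j})$ from above by a constant multiple of $\sum_i(\sum_j a_{i,j}^2)^{1/2}+\sum_j(\sum_i a_{i,j}^2)^{1/2}$, which is $\asymp R$; in fact this expected norm is equivalent to that quantity, and this is exactly the two-sided estimate for structured random matrices from \cite{APSS}, specialized to the endpoint exponents $p=\infty$ and $q=1$. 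I expect this last ingredient to be the main obstacle: a crude union bound over the $2^m$ extreme points of the unit ball of $\ell_\infty^m$ would cost a spurious factor $\sqrt m$, and removing it is precisely what the chaining-type analysis of \cite{APSS} achieves — in the unimodular case $a_{i,j}\equiv1$ this reduces to the classical theorem of Bennett recalled in the introduction. Combining the Gaussian estimate with the contraction step completes the proof of the upper bound and, together with the lower bound above, of the theorem.
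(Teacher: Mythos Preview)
Your proposal is correct and follows essentially the same route as the paper: the lower bound via Szarek's Khintchine inequality applied row-wise and column-wise is precisely the content of Lemma~\ref{Le1}, and the upper bound via Gaussian domination combined with the structured-matrix estimate of \cite[Proposition~1.8(i)]{APSS} is exactly Lemma~\ref{Le2}. The only cosmetic difference is the order in which you invoke the Rademacher--Gaussian comparison and the APSS bound.
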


In the proof of this theorem we will make use of two lemmas. The first of them is really well known; in particular, Littlewood applied similar argument in \cite{Litt1} to prove his famous $4/3$-inequality. However, for the convenience of the reader  we present its proof below.
%$$
%{\Big\|\sum_{i,j=1}^\infty a_{i,j}r_i \otimes r_j\Big\|}_{L_\infty([0,1]^2)}\geqslant\frac{1}{\sqrt{2}}\|(a_{i,j})\|_{\ell_{4/3}}.
%$$
%Но ради полноты изложения мы приведем полное доказательство леммы.

\begin{lemma}
\label{Le1}
For any $m,n\in\mathbb{N}$ and $a_{i,j}\in\mathbb{R}$, $1\leqslant i\leqslant n$, $1\leqslant j\leqslant m$, we have
$$
\Big\|\sum_{i=1}^n\sum_{j=1}^m a_{i,j} r_{i}\otimes r_j\Big\|_{L_\infty([0,1]^2)}\geqslant \frac{1}{\sqrt{2}}\max\Big\{\sum_{i=1}^n\Big(\sum_{j=1}^m a_{i,j}^2\Big)^{1/2},\sum_{j=1}^m\Big(\sum_{i=1}^n a_{i,j}^2\Big)^{1/2}\Big\}. 
$$
\end{lemma}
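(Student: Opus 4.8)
The plan is to bound the $L_\infty$-norm below by each of the two row- and column-type quantities separately; by symmetry in $(i,j)$ it suffices to handle the column sums $\sum_{j=1}^m\bigl(\sum_{i=1}^n a_{i,j}^2\bigr)^{1/2}$, and the bound for the row sums then follows by transposing the matrix. First I would fix, for each $j$, an independent copy of the Rademacher variables: more precisely, I would use the product structure of $[0,1]^2$, writing the norm as $\sup_{s,t}\bigl|\sum_{i}\sum_j a_{i,j} r_i(s) r_j(t)\bigr|$, and for a fixed outer point $s$ look at the inner sum $\sum_i a_{i,j} r_i(s)$ as the coefficient of $r_j(t)$.

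The key step is this: for \emph{every} fixed $s\in[0,1]$ we have, using the definition of the Rademacher functions on the $t$-variable (for any finite choice of signs $\theta_j=\pm 1$ there is a $t$ with $r_j(t)=\theta_j$),
$$
\Big\|\sum_{i=1}^n\sum_{j=1}^m a_{i,j} r_i\otimes r_j\Big\|_{L_\infty([0,1]^2)}
\;\geqslant\; \sup_{t}\Big|\sum_{j=1}^m\Big(\sum_{i=1}^n a_{i,j} r_i(s)\Big) r_j(t)\Big|
\;=\; \sum_{j=1}^m\Big|\sum_{i=1}^n a_{i,j} r_i(s)\Big|.
$$
Since this holds pointwise in $s$, I may integrate over $s\in[0,1]$ and pull the integral inside the (finite) sum over $j$:
$$
\Big\|\sum_{i=1}^n\sum_{j=1}^m a_{i,j} r_i\otimes r_j\Big\|_{L_\infty([0,1]^2)}
\;\geqslant\; \sum_{j=1}^m \int_0^1\Big|\sum_{i=1}^n a_{i,j} r_i(s)\Big|\,ds
\;=\; \sum_{j=1}^m \Big\|\sum_{i=1}^n a_{i,j} r_i\Big\|_{L_1}.
$$
Now I apply the lower Khintchine inequality \eqref{basic1} with $p=1$, namely $\|\sum_i a_{i,j} r_i\|_{L_1}\geqslant \tfrac{1}{\sqrt 2}\bigl(\sum_i a_{i,j}^2\bigr)^{1/2}$, to each term, obtaining $\Big\|\sum_{i}\sum_{j} a_{i,j} r_i\otimes r_j\Big\|_{L_\infty([0,1]^2)}\geqslant \tfrac{1}{\sqrt 2}\sum_{j=1}^m\bigl(\sum_i a_{i,j}^2\bigr)^{1/2}$. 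Repeating the argument with the roles of $s$ and $t$ (equivalently, of $i$ and $j$) interchanged gives the bound with $\sum_{i=1}^n\bigl(\sum_j a_{i,j}^2\bigr)^{1/2}$, and taking the maximum of the two bounds yields the claim.

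There is no real obstacle here: the only subtle point is the legitimacy of the pointwise-in-$s$ reduction, which rests solely on the elementary fact that the finite Rademacher family $\{r_j\}$ attains every sign pattern, so that the inner $L_\infty$-norm in $t$ equals $\ell_1$ of the coefficients; after that it is just Fubini (integrating a pointwise inequality) plus Szarek's sharp constant $1/\sqrt 2$ in \eqref{basic1}. If one wants to avoid invoking \eqref{basic1} one could alternatively integrate over both $s$ and $t$ and use the simplest $\|\cdot\|_{L_1}\le\|\cdot\|_{L_2}$ together with an $L_1$-$L_2$ comparison, but using \eqref{basic1} directly is cleanest and gives the stated constant.
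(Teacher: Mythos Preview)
Your proof is correct and follows essentially the same approach as the paper: take the supremum in one variable (using that the Rademacher family attains every sign pattern, so the inner $L_\infty$-norm equals the $\ell_1$-norm of the coefficients), integrate in the remaining variable, and apply Szarek's inequality \eqref{basic1} termwise. The only cosmetic difference is that the paper handles the row sums first while you handle the column sums first.
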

\begin{proof}
By Szarek's inequality \eqref{basic1},
%(with the optimal constant in Khintchine's inequality for $p=1$ ), for each $m\in\mathbb{N}$ and an arbitrary sequence $\{c_j\}_{j=1}^m$ 
it follows that
%$$
%\int_0^1\Big|\sum_{j=1}^m c_jr_j(v)\Big|\,dv\geqslant \frac{1}{\sqrt{2}}\Big(\sum_{j=1}^mc_j^2\Big)^{1/2}.
%$$
%Consequently,
\begin{eqnarray*}
\Big\|\sum_{i=1}^n\sum_{j=1}^m a_{i,j} r_{i}\otimes r_j\Big\|_{L_\infty([0,1]^2)} &=& \sup_{v\in [0,1]}\sum_{i=1}^n\Big|\sum_{j=1}^m a_{i,j} r_{j}(v)\Big|\\ &\geqslant& \sum_{i=1}^n\int_0^1\Big|\sum_{j=1}^m a_{i,j} r_{j}(v)\Big|\,dv\\ &\geqslant& \frac{1}{\sqrt{2}}\sum_{i=1}^n\Big(\sum_{j=1}^m a_{i,j}^2\Big)^{1/2}.
\end{eqnarray*}
Since similarly 
$$
\Big\|\sum_{i=1}^n\sum_{j=1}^m a_{i,j} r_{i}\otimes r_j\Big\|_{L_\infty([0,1]^2)}\geqslant \frac{1}{\sqrt{2}}\sum_{j=1}^m\Big(\sum_{i=1}^n a_{i,j}^2\Big)^{1/2}, 
$$
the proof is completed.
\end{proof}

%As was already said, we will make use of results of the paper \cite{APSS}. 
Further, we will need some additional notation. By $A\circ B$ we denote the Hadamard (i.e., coordinate-wise) multiplication of two matrices $A$ and $B$ of the same size. Let $r_{i,j}$ (respectively $g_{i,j}$) be the Rademacher (respectively, independent standard Gaussian) random variables, $1\leqslant i\leqslant n$, $1\leqslant j\leqslant m$. Setting $R:=(r_{i,j})_{i\leqslant n, j\leqslant m}$ and $G:=(g_{i,j})_{i\leqslant n, j\leqslant m}$, for an arbitrary matrix $A=(a_{i,j})_{i\leqslant n, j\leqslant m}$, we denote by $R_A$ (resp. $G_A$) the operator corresponding to the matrix $R\circ A=(r_{i,j}a_{i,j})_{i\leqslant n, j\leqslant m}$ (respectively, $G\circ A=(g_{i,j}a_{i,j})_{i\leqslant n, j\leqslant m}$). 
%In addition, letwe will use the matrix $A\circ A=(a_{i,j}^2)_{i\leqslant n, j\leqslant m}$. 
By $\mathsf{E}$ we will denote the expectation over the corresponding probability space.
% on which random variables $r_{i,j}$ (resp. $g_{i,j}$) are defined.

\begin{lemma}
\label{Le2}
There is a universal constant $C>0$ such that for any $m,n\in\mathbb{N}$ and $a_{i,j}\in\mathbb{R}$, $1\leqslant i\leqslant n$, $1\leqslant j\leqslant m$, we have
$$
\mathsf{E}\,\|R_A:\,\ell_\infty^m\to \ell_1^n\|\leqslant C\max\Big\{\sum_{i=1}^n\Big(\sum_{j=1}^m a_{i,j}^2\Big)^{1/2},\sum_{j=1}^m\Big(\sum_{i=1}^n a_{i,j}^2\Big)^{1/2}\Big\}.
$$

\end{lemma}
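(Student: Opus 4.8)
The plan is to estimate $\mathsf{E}\,\|R_A:\ell_\infty^m\to\ell_1^n\|$ by first passing from Rademacher to Gaussian randomness, then invoking a known bound on the expected $\ell_\infty\to\ell_1$ operator norm of a Gaussian matrix with prescribed variance profile. The contraction principle gives $\mathsf{E}\,\|R_A\|\le\mathsf{E}\,\|G_A\|$ up to a universal factor (comparing $\mathsf{E}|g_{i,j}|=\sqrt{2/\pi}$ with $\mathsf{E}|r_{i,j}|=1$, one in fact gets $\mathsf{E}\,\|R_A\|\le\sqrt{\pi/2}\,\mathsf{E}\,\|G_A\|$): conditionally on the signs, $R\circ A$ is distributed as $R\circ (\mathsf{E}|g|)^{-1}\,|G|\circ A$ after absorbing absolute values, and the operator norm $\|\cdot:\ell_\infty^m\to\ell_1^n\|$ of $B$ depends only on $B$ through the bilinear form $\sum b_{i,j}x_iy_j$ over the cube, so Jensen in the Gaussian magnitudes does the comparison. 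This reduces the lemma to the Gaussian statement
\[
\mathsf{E}\,\|G_A:\ell_\infty^m\to\ell_1^n\|\le C\max\Big\{\sum_{i=1}^n\Big(\sum_{j=1}^m a_{i,j}^2\Big)^{1/2},\sum_{j=1}^m\Big(\sum_{i=1}^n a_{i,j}^2\Big)^{1/2}\Big\}.
\]

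The Gaussian bound is exactly the kind of estimate supplied by the Adamczak--Prochno--Strzelecka--Strzelecki paper \cite{APSS} cited just before the theorem: for a Gaussian matrix with variance profile $(a_{i,j}^2)$, the expected norm as an operator between $\ell_p^m$ and $\ell_q^n$ is controlled, up to constants depending only on $p,q$, by an explicit deterministic quantity built from the row and column $\ell_2$-norms (and, in some ranges, Orlicz-type functionals) of the profile. In the endpoint case $p=\infty$, $q=1$ relevant here, that quantity collapses to the maximum of $\sum_i\|(a_{i,j})_j\|_2$ and $\sum_j\|(a_{i,j})_i\|_2$, which is precisely the right-hand side above. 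So the second step is simply to quote the appropriate specialization of their result; I would state it as a displayed consequence and reference the exact theorem number from \cite{APSS}.

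For completeness I would also indicate how one sees the Gaussian upper bound directly, since that makes the argument self-contained modulo a standard chaining or union-bound estimate. Write $\|G_A:\ell_\infty^m\to\ell_1^n\|=\sup_{x\in\{-1,1\}^m}\sum_{i=1}^n|\sum_{j=1}^m g_{i,j}a_{i,j}x_j|$. For each fixed $x$ the inner sum is $\sum_i|Z_i(x)|$ with $Z_i(x)$ centered Gaussian of variance $\sum_j a_{i,j}^2x_j^2=\sum_j a_{i,j}^2=:\rho_i^2$, independent across $i$, so its expectation is $\sqrt{2/\pi}\sum_i\rho_i$, and concentration of the $1$-Lipschitz-in-$G$ (with the Hilbert--Schmidt norm weighted by $|a_{i,j}|$) functional $x\mapsto\sum_i|Z_i(x)|$ gives subgaussian deviations with parameter $\max_j\sum_i a_{i,j}^2$ --- no, more carefully, the Lipschitz constant of $G\mapsto\sum_i|\langle G_i\,,\,a_i\circ x\rangle|$ is $\max_i\|a_i\|_2=\max_i\rho_i$; that alone is not enough to beat the $2^m$ points. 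The cleaner route is the symmetric one: bound the supremum over $x$ of $\sum_i|\langle G_i, a_i\circ x\rangle|$ using that for each $i$ the map $x\mapsto|\langle G_i,a_i\circ x\rangle|$ is a Gaussian-width term, $\mathsf{E}\sup_x\sum_i|\langle G_i,a_i\circ x\rangle|\le\sum_i\mathsf{E}\sup_x|\langle G_i,a_i\circ x\rangle|$ would be too lossy; instead dualize $\ell_1^n$ to $\ell_\infty^n$ and write the norm as $\sup_{x\in\{-1,1\}^m,\,\varepsilon\in\{-1,1\}^n}\sum_{i,j}\varepsilon_i x_j g_{i,j}a_{i,j}$, a Gaussian process indexed by $\{-1,1\}^n\times\{-1,1\}^m$, whose supremum one estimates by splitting the increments into an "$\varepsilon$-part" and an "$x$-part" and applying the standard bound $\mathsf{E}\sup\langle G,\cdot\rangle$ for a finite set of $2^{n+m}$ points with the variance of each coordinate change controlled; Slepian/comparison with a product process then yields the claimed maximum of the two column/row-sum quantities (this is the computation underlying \cite[Bennett]{Bennett} in the unimodular case).

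The main obstacle is getting the \emph{correct} deterministic upper bound --- the maximum of the two sums $\sum_i\rho_i$ and $\sum_j\sigma_j$, rather than something like $(\sum_i\rho_i)(\text{something})$ or a term involving $\max$'s instead of sums. A naive union bound over the $2^{m}$ (or $2^{n+m}$) sign vectors contributes a $\sqrt{\log}$ factor times $\max_i\rho_i$ or $\max_j\sigma_j$, which is of the wrong form; the point of the \cite{APSS} machinery (and of Bennett's original argument) is precisely that this logarithmic term is absorbed into the sum structure because of the high-dimensional geometry of the cube, and reproducing that absorption carefully is the delicate part. My recommendation for the paper is therefore to keep the proof short: do the Rademacher-to-Gaussian contraction explicitly, then cite the relevant theorem of \cite{APSS} for the Gaussian variance-profile estimate in the $\ell_\infty\to\ell_1$ regime, and relegate the direct chaining argument to a remark. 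The lower bound matching this (hence the two-sided estimate in Theorem \ref{theor_bi_Rad}) is already handled by Lemma \ref{Le1}, so Lemma \ref{Le2} together with the trivial inequality $\min_\theta\|\cdot\|\le\mathsf{E}_\theta\|\cdot\|$ and the unconditionality-type averaging will close the circle of equivalences.
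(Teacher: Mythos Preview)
Your proposal is correct and follows essentially the same route as the paper: pass from Rademacher to Gaussian via the standard domination $\mathsf{E}\|R_A\|\le\sqrt{\pi/2}\,\mathsf{E}\|G_A\|$, then invoke \cite{APSS} for the Gaussian variance-profile estimate. The paper does exactly this, citing specifically \cite[Proposition~1.8(i)]{APSS}, which gives $\mathsf{E}\,\|G_A:\ell_\infty^m\to\ell_1^n\|\asymp\|A\circ A:\ell_\infty^m\to\ell_{1/2}^n\|^{1/2}+\|(A\circ A)^T:\ell_\infty^n\to\ell_{1/2}^m\|^{1/2}$, and then computes these two quasi-norms explicitly to recover the two row/column $\ell_2$-sums; your long digression attempting a self-contained chaining argument is unnecessary (and, as you yourself note, does not easily yield the correct bound without extra work).
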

\begin{proof}
In view of \cite[Proposition~1.8(i)]{APSS} it holds
\begin{equation}
\label{gauss1}
\mathsf{E}\,\|G_A:\,\ell_\infty^m\to \ell_1^n\|\asymp \|A\circ A:\,\ell_\infty^m\to \ell_{1/2}^n\|^{1/2}+\|(A\circ A)^T:\,\ell_\infty^n\to \ell_{1/2}^m\|^{1/2}
\end{equation}
%where
%$$
%D_1:=\|A\circ A:\,\ell_\infty^m\to \ell_{1/2}^n\|^{1/2},$$
%$$
%D_2:=\|(A\circ A)^T:\,\ell_\infty^n\to \ell_{1/2}^m\|^{1/2}$$
(here, $B^T$ is the transposed matrix for a matrix $B$).
Observe that
$$
\|A\circ A:\,\ell_\infty^m\to \ell_{1/2}^n\|^{1/2}=\sup_{\|t\|_{\ell_\infty^m}\leqslant 1}\sum_{i=1}^n\Big|\sum_{j=1}^m t_ja_{i,j}^2\Big|^{1/2}=\sum_{i=1}^n\Big(\sum_{j=1}^m a_{i,j}^2\Big)^{1/2}$$
and similarly 
$$
\|(A\circ A)^T:\,\ell_\infty^n\to \ell_{1/2}^m\|^{1/2}=\sum_{j=1}^m\Big(\sum_{i=1}^n a_{i,j}^2\Big)^{1/2}.$$
Thus, from \eqref{gauss1} it follows that

\begin{equation}
\label{gauss2}
\mathsf{E}\,\|G_A:\,\ell_\infty^m\to \ell_1^n\|\asymp \max\Big\{\sum_{i=1}^n\Big(\sum_{j=1}^m a_{i,j}^2\Big)^{1/2},\sum_{j=1}^m\Big(\sum_{i=1}^n a_{i,j}^2\Big)^{1/2}\Big\}. 
\end{equation}

Next, we equip the linear space $F_{n,m}$ of matrices $X=(x_{i,j})_{i\leqslant n, j\leqslant m}$  with the norm 
$$
{\|X\|}_F:=\|X:\,\ell_\infty^m\to \ell_1^n\|.$$
Let $X^{k,l}=(x_{i,j}^{k,l})_{i\leqslant n, j\leqslant m}$, where $x_{i,j}^{k,l}=a_{k,l}$ if $i=k$, $j=l$, and $x_{i,j}^{k,l}=0$, otherwise. Then, it is easy to see that
$$
G_A=\sum_{k=1}^n\sum_{l=1}^m g_{k,l}X^{k,l}\quad\mbox{and}\quad R_A=\sum_{k=1}^n\sum_{l=1}^m r_{k,l}X^{k,l}.$$
As is well known (see, e.g., \cite[Chapter 7]{LedTal} or \cite[Chapter 7]{AsBook})), the distribution of Banach space norms of random vector Gaussian sums majorizes (up to a constant) that of these norms of the corresponding Rademacher sums. In particular, we have 
$$
\mathsf{E}\|R_A\|_F\leqslant \sqrt{\frac{\pi}{2}}\cdot\mathsf{E}\|G_A\|_F$$ 
(cf. \cite[Corollary 7.1]{AsBook}). Combining this together with the definition of the norm ${\|X\|}_F$ and equivalence \eqref{gauss2}, we obtain the required result.
\end{proof}

\begin{proof}[The proof of Theorem \ref{theor_bi_Rad}] On the one hand, by Lemma \ref{Le1}, we arrive at the inequality
\begin{equation*}
\mathsf{E}_\theta\,\Bigl\|\sum_{i=1}^n\sum_{j=1}^m \theta_{i,j}a_{i,j}r_i\otimes r_j\Bigr\|_{L_\infty([0,1]^2)}\geqslant
\min_{\theta_{i,j}=\pm 1}{\Big\|\sum_{i=1}^n\sum_{j=1}^m \theta_{i,j}a_{i,j}r_i\otimes r_j\Big\|}_{L_\infty([0,1]^2)}
\end{equation*}
\begin{equation*}
\geqslant\frac{1}{\sqrt{2}}\max\Big\{\sum_{i=1}^n\Big(\sum_{j=1}^m a_{i,j}^2\Big)^{1/2},\sum_{j=1}^m\Big(\sum_{i=1}^n a_{i,j}^2\Big)^{1/2}\Big\}. 
\end{equation*}
On the other hand, with a universal constant $C>0$, from equation \eqref{eq_operator_norm} and Lemma \ref{Le2} it follows that 
\begin{eqnarray*}
\mathsf{E}_\theta\,\Bigl\|\sum_{i=1}^n\sum_{j=1}^m \theta_{i,j}a_{i,j}r_i\otimes r_j\Bigr\|_{L_\infty([0,1]^2)}&=&\mathsf{E}\,\|R_A:\,\ell_\infty^m\to \ell_1^n\|\\
%\end{equation*}
%\begin{equation*}
&\le& C\max\Big\{\sum_{i=1}^n\Big(\sum_{j=1}^m a_{i,j}^2\Big)^{1/2},\sum_{j=1}^m\Big(\sum_{i=1}^n a_{i,j}^2\Big)^{1/2}\Big\},
\end{eqnarray*}
%\geqslant \mathsf{E}\,\|R_A:\,\ell_\infty^m\to \ell_1^n\|
%\end{equation*}
%\begin{equation*}
%=\int\limits_0^1 \Bigl\|\sum_{i=1}^n\sum_{j=1}^m r_{i,j}(\omega) a_{i,j}r_i(u)r_j(v)\Bigr\|_{L_\infty([0,1]^2)}\,d\omega,
%\end{equation*}
and the proof is completed.
\end{proof}

The following result is an immediate consequence of Theorem \ref{theor_bi_Rad}.

\begin{cor}
\label{Theorema_RUC_mult_Rad}
The sequence $\{r_i\otimes r_j\}_{1\leqslant i<  \infty, 1\leqslant j<\infty}$ is a system of random unconditional convergence in the space $L_\infty([0,1]^2)$, i.e., there exists a universal constant $C>0$ such that for all $n,m\in\mathbb{N}$ and $a_{i,j}\in\mathbb{R}$   
$$
\mathsf{E}_\theta \Bigl\|\sum\nolimits_{i=1}^{n}\sum\nolimits_{j=1}^{m} \theta_{i,j}a_{i,j}r_i\otimes r_j\Bigr\|_{L_\infty([0,1]^2)}\leqslant C
{\Big\|\sum\nolimits_{i=1}^{n}\sum\nolimits_{j=1}^{m}a_{i,j}r_i\otimes r_j\Big\|}_{L_\infty([0,1]^2)}.
$$
%(here, $r_{i,j}$ is the Rademacher sequence, numbered in an arbitrary order).
\end{cor}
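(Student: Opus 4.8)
The plan is to obtain the corollary by directly concatenating the upper estimate of Theorem~\ref{theor_bi_Rad} with the lower estimate of Lemma~\ref{Le1}; essentially no new argument is needed. First I would record the piece of Theorem~\ref{theor_bi_Rad} that controls the averaged signed norm: there is a universal constant $C_1>0$ such that, for all $n,m\in\mathbb{N}$ and all real $a_{i,j}$,
\[
\mathsf{E}_\theta\Bigl\|\sum_{i=1}^{n}\sum_{j=1}^{m}\theta_{i,j}a_{i,j}r_i\otimes r_j\Bigr\|_{L_\infty([0,1]^2)}\leqslant C_1\max\Bigl\{\sum_{i=1}^{n}\Bigl(\sum_{j=1}^{m}a_{i,j}^2\Bigr)^{1/2},\sum_{j=1}^{m}\Bigl(\sum_{i=1}^{n}a_{i,j}^2\Bigr)^{1/2}\Bigr\}.
\]

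Next I would apply Lemma~\ref{Le1} to the \emph{unsigned} sum $\sum_{i,j}a_{i,j}r_i\otimes r_j$, which gives
\[
\max\Bigl\{\sum_{i=1}^{n}\Bigl(\sum_{j=1}^{m}a_{i,j}^2\Bigr)^{1/2},\sum_{j=1}^{m}\Bigl(\sum_{i=1}^{n}a_{i,j}^2\Bigr)^{1/2}\Bigr\}\leqslant\sqrt{2}\,\Bigl\|\sum_{i=1}^{n}\sum_{j=1}^{m}a_{i,j}r_i\otimes r_j\Bigr\|_{L_\infty([0,1]^2)}.
\]
Chaining the two displays yields the claimed inequality with $C:=\sqrt{2}\,C_1$, and this constant depends neither on $n,m$ nor on the coefficients $a_{i,j}$, which is precisely what the RUC-property demands. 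For completeness I would add one sentence observing that, to speak of a RUC \emph{system}, the naturally ordered sequence $\{r_i\otimes r_j\}$ must be basic in the relevant subspace of $L_\infty([0,1]^2)$, which follows from the basicity of the multiple Rademacher system recorded in \cite[Theorem~2]{AL_AA} applied on $[0,1]^2$.

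I do not foresee any genuine obstacle: the substantive content — that the quantity majorizing $\mathsf{E}_\theta\|\cdot\|$ in Theorem~\ref{theor_bi_Rad} is, up to the factor $\sqrt{2}$, also a lower bound for the $L_\infty$-norm of the unsigned Rademacher sum — has already been isolated in Theorem~\ref{theor_bi_Rad} and Lemma~\ref{Le1}. Consequently the proof of the corollary is a one-line deduction, and the only care required is to track that the resulting constant is universal, which it is.
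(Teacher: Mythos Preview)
Your proposal is correct and matches the paper's intent: the paper states the corollary as ``an immediate consequence of Theorem~\ref{theor_bi_Rad}'' without further proof, and your chaining of the upper estimate from Theorem~\ref{theor_bi_Rad} with the lower bound of Lemma~\ref{Le1} is precisely the unpacking of that immediacy. (Equivalently, one could combine $\mathsf{E}_\theta\|\cdot\|\asymp\min_\theta\|\cdot\|$ from Theorem~\ref{theor_bi_Rad} with the trivial bound $\min_\theta\|\cdot\|\leqslant\|\cdot\|$ obtained by taking all $\theta_{i,j}=1$; this is the same argument since Lemma~\ref{Le1} is how the lower half of Theorem~\ref{theor_bi_Rad} was established.)
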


Moreover, in view of inequalities \eqref{eq_rad_cut_norm}, the statement of Theorem \ref{theor_bi_Rad} implies the analogous result for matrix cut-norms.
% of a random  matrix $(\theta_{i,j}a_{i,j})_{i\leqslant n, j\leqslant m}$.

\begin{cor}
\label{cor_cut_norm}
For every $n,m\in\mathbb{N}$ and all matrices $A=(a_{i,j})_{i\leqslant n, j\leqslant m}$, with universal constants it holds:
$$
\mathsf{E}_\theta{\|(\theta_{i,j}a_{i,j})\|}_{cut}\asymp \min_{\theta_{i,j}=\pm 1}{\|(\theta_{i,j}a_{i,j})\|}_{cut}\asymp \max\Big\{\sum_{i=1}^n\Big(\sum_{j=1}^m a_{i,j}^2\Big)^{1/2},\sum_{j=1}^m\Big(\sum_{i=1}^n a_{i,j}^2\Big)^{1/2}\Big\}. 
$$
\end{cor}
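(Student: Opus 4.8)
The plan is to derive Corollary \ref{cor_cut_norm} directly from Theorem \ref{theor_bi_Rad} by invoking the two-sided norm equivalence \eqref{eq_rad_cut_norm}, which says that for any matrix $B=(b_{i,j})_{i\le n,j\le m}$ one has $\|B\|_{cut}\le \bigl\|\sum_{i,j}b_{i,j}r_i\otimes r_j\bigr\|_{L_\infty([0,1]^2)}\le 4\|B\|_{cut}$, i.e. the cut-norm is equivalent to the $L_\infty$-norm of the associated Rademacher sum with universal constants. Applying this with $B=(\theta_{i,j}a_{i,j})$ transfers all three quantities in the statement, up to the universal factor $4$, into the corresponding quantities for $L_\infty$-norms of sign-randomized Rademacher sums, which are exactly those handled by Theorem \ref{theor_bi_Rad}.

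Concretely, first I would fix $n,m\in\mathbb{N}$, a matrix $A=(a_{i,j})_{i\le n,j\le m}$, and for each choice of signs $\theta=(\theta_{i,j})$ write $A_\theta:=(\theta_{i,j}a_{i,j})$. By \eqref{eq_rad_cut_norm} applied to $A_\theta$,
\[
\|A_\theta\|_{cut}\asymp \Bigl\|\sum_{i=1}^n\sum_{j=1}^m \theta_{i,j}a_{i,j}\,r_i\otimes r_j\Bigr\|_{L_\infty([0,1]^2)}
\]
with constants $1$ and $4$ independent of $\theta$, $n$, $m$, and $A$. Taking the expectation $\mathsf{E}_\theta$ of both sides and using monotonicity and linearity of the expectation gives
\[
\mathsf{E}_\theta\|A_\theta\|_{cut}\asymp \mathsf{E}_\theta\Bigl\|\sum_{i=1}^n\sum_{j=1}^m \theta_{i,j}a_{i,j}\,r_i\otimes r_j\Bigr\|_{L_\infty([0,1]^2)};
\]
taking instead the minimum over all sign patterns $\theta_{i,j}=\pm1$ (the equivalence holds termwise, so it survives the infimum) gives
\[
\min_{\theta_{i,j}=\pm1}\|A_\theta\|_{cut}\asymp \min_{\theta_{i,j}=\pm1}\Bigl\|\sum_{i=1}^n\sum_{j=1}^m \theta_{i,j}a_{i,j}\,r_i\otimes r_j\Bigr\|_{L_\infty([0,1]^2)}.
\]
Now Theorem \ref{theor_bi_Rad} asserts that both right-hand sides are equivalent, with universal constants, to $\max\bigl\{\sum_{i=1}^n(\sum_{j=1}^m a_{i,j}^2)^{1/2},\ \sum_{j=1}^m(\sum_{i=1}^n a_{i,j}^2)^{1/2}\bigr\}$. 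Chaining these equivalences yields the claimed double equivalence and completes the proof.

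There is essentially no obstacle here: the content is entirely contained in Theorem \ref{theor_bi_Rad} and in the already-established matrix-norm identity \eqref{eq_rad_cut_norm}. The only point requiring a word of care is that the equivalence in \eqref{eq_rad_cut_norm} must hold with the same pair of absolute constants for every sign pattern $\theta$, so that it may be transported under $\mathsf{E}_\theta$ and under $\min_\theta$; this is clear since \eqref{eq_rad_cut_norm} is stated for an arbitrary matrix and $A_\theta$ ranges over a subfamily of all real matrices. One could alternatively bypass \eqref{eq_rad_cut_norm} entirely and reprove the corollary from scratch, using the representation $\|A_\theta\|_{cut}=\bigl\|\sum_{i,j}\theta_{i,j}a_{i,j}\,b_i\otimes b_j\bigr\|_{L_\infty([0,1]^2)}$ with the Bernoulli variables $b_i=(r_i+1)/2$ together with a decoupling/undoing argument, but the route through Theorem \ref{theor_bi_Rad} is shorter and is the natural one given the ordering of results in the paper.
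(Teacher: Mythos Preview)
Your argument is correct and matches the paper's own justification exactly: the paper simply states that ``in view of inequalities \eqref{eq_rad_cut_norm}, the statement of Theorem \ref{theor_bi_Rad} implies the analogous result for matrix cut-norms,'' which is precisely the route you have spelled out. There is nothing to add.
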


%{\color{red} Let $\mathcal{M}_{cut}$ be the linear space of all infinite matrices $A=(a_{i,j})_{1\le i,j<\infty}$ such that 
%$$
%\|A\|_{\mathcal{M}_{cut}}:=\sup_{n,m=1,2,\dots}{\|(a_{i,j})_{i \leqslant n, j\leqslant m}\|}_{cut}<\infty.$$ Then, Corollary \ref{cor_cut_norm} means that the standard unit matrices $E^{k,l}=(e^{k,l}_{i,j})_{1\le i,j<\infty}$, $1\le k,l<\infty$ (that is, $e^{k,l}_{k,l}=1$   and $e^{k,l}_{i,j}=0$ if either $i\ne k$ or $j\ne l$), form a RUC sequence in the Banach space $\mathcal{M}_{cut}$.}

%{\color{blue} 
Let $\mathcal{M}_{cut}$ be the linear space of all infinite matrices $A=(a_{i,j})_{1\le i,j<\infty}$ such that 
$$
\|A\|_{\mathcal{M}_{cut}}:=\sup_{n,m=1,2,\dots}{\|(a_{i,j})_{i \leqslant n, j\leqslant m}\|}_{cut}<\infty.$$
One can check that $\mathcal{M}_{cut}$ equipped with this norm is a Banach space. Furthermore, $\mathcal{M}_{cut}$  coincides with the space of all bounded linear operators from $c_0$ to $\ell_1$, where $c_0$ is the space of all sequences converging to zero \cite[Lemma 1]{Lorentz}. Let  $E^{k,l}=(e^{k,l}_{i,j})_{1\le i,j<\infty}$, $1\le k,l<\infty$, be  the sequence of standard unit matrices (that is, $e^{k,l}_{k,l}=1$,   and $e^{k,l}_{i,j}=0$ if either $i\ne k$ or $j\ne l$), which is ordered as follows: $E^{k_1,l_1}\prec E^{k_2,l_2}$ if $\max\{k_1,l_1\}<\max\{k_2,l_2\}$, or if $\max\{k_1,l_1\}=\max\{k_2,l_2\}$ and $k_1<k_2$, or if $\max\{k_1,l_1\}=\max\{k_2,l_2\}=k_1=k_2$ and $l_1>l_2$. Then, Corollary \ref{cor_cut_norm} means that $\{E^{k,l}\}_{1\leqslant k,l<\infty}$ is a RUC sequence in the space $\mathcal{M}_{cut}$. Moreover, it can be shown that the sequence $\{E^{k,l}\}_{1\leqslant k,l<\infty}$ is a RUC basis in $\mathcal{M}_{cut}$. 
%{\color{blue} 
%This gives another proof that any bounded linear operator from $c_0$ to $\ell_1$ is compact (a special case of the famous Pitt theorem, see \cite[Proposition 2.c.3]{LT1}).}
%A similar result for Schatten-von Neumann ideals $S^p$, $2\le p<\infty$, was proved by Pisier (1985). 

  \section{RUC-property of the second-order Rademacher chaos in $L_\infty$}
\label{Sec_chaos_Rad}

In this section we prove, by using the decoupling techniques, a result similar to Theorem \ref{theor_bi_Rad} for the second-order Rademacher chaos.

%С помощью этой теоремы мы просто сведем утверждения о (неразделенном) хаосе к результатам о кратной системе Радемахера из предыдущего раздела.
\begin{theorem}\label{theor_chaos}
With universal constants, for all $n\in\mathbb{N}$ and $a_{i,j}\in\mathbb{R}$, $1\leqslant i<j\leqslant n$, we have
\begin{equation*}
\mathsf{E}\,_\theta\Bigl\|\sum_{i=1}^n\sum_{j=i+1}^n \theta_{i,j}a_{i,j}r_i r_j\Bigr\|_{L_\infty}\asymp
\min_{\theta_{i,j}=\pm 1}{\Big\|\sum_{i=1}^n\sum_{j=i+1}^n \theta_{i,j}a_{i,j}r_i r_j\Big\|}_{L_\infty}
\end{equation*}
\begin{equation*}
\asymp\max\Big\{\sum_{i=1}^{n-1}\Big(\sum_{j=i+1}^n a_{i,j}^2\Big)^{1/2},\sum_{j=2}^n\Big(\sum_{i=1}^{j-1} a_{i,j}^2\Big)^{1/2}\Big\}. 
\end{equation*}
\end{theorem}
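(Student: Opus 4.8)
The plan is to reduce Theorem \ref{theor_chaos} to the already-established Theorem \ref{theor_bi_Rad} by means of the decoupling technology packaged in Corollary \ref{real theor_decoupling}. Recall from \eqref{first def} that with $S_n=\sum_{i<j}a_{i,j}r_ir_j$ we may write $S_n=\sum_{i,j}d_{i,j}r_ir_j$ where $d_{i,j}=d_{j,i}=a_{i,j}/2$ for $j>i$ and $d_{i,i}=0$; this symmetric array $D=(d_{i,j})$ satisfies the hypotheses of Corollary \ref{real theor_decoupling} in the case $d=2$. Applying the corollary to the Rademacher sequence and an independent copy gives
\begin{equation*}
\Bigl\|\sum_{i<j}a_{i,j}r_ir_j\Bigr\|_{L_\infty}\asymp\Bigl\|\sum_{i=1}^n\sum_{j=1}^n d_{i,j}r_i\otimes r_j\Bigr\|_{L_\infty([0,1]^2)},
\end{equation*}
with universal constants, and the same equivalence holds with every coefficient $a_{i,j}$ (hence every $d_{i,j}$) replaced by $\theta_{i,j}a_{i,j}$ (resp. $\theta_{i,j}d_{i,j}$, keeping it symmetric by the convention $\theta_{j,i}=\theta_{i,j}$). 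Taking expectations over $\theta$ on both sides and likewise taking minima over $\theta$, one obtains
\begin{equation*}
\mathsf{E}_\theta\Bigl\|\sum_{i<j}\theta_{i,j}a_{i,j}r_ir_j\Bigr\|_{L_\infty}\asymp\mathsf{E}_\theta\Bigl\|\sum_{i,j}\theta_{i,j}d_{i,j}r_i\otimes r_j\Bigr\|_{L_\infty([0,1]^2)},
\end{equation*}
and the analogous statement for the minima; the decoupling constants are universal so they are harmless.

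Next I would invoke Theorem \ref{theor_bi_Rad} applied to the square array $D=(d_{i,j})_{1\le i,j\le n}$. It yields
\begin{equation*}
\mathsf{E}_\theta\Bigl\|\sum_{i,j}\theta_{i,j}d_{i,j}r_i\otimes r_j\Bigr\|_{L_\infty([0,1]^2)}\asymp\min_{\theta_{i,j}=\pm1}\Bigl\|\sum_{i,j}\theta_{i,j}d_{i,j}r_i\otimes r_j\Bigr\|_{L_\infty([0,1]^2)}\asymp\max\Bigl\{\sum_{i=1}^n\Bigl(\sum_{j=1}^n d_{i,j}^2\Bigr)^{1/2},\sum_{j=1}^n\Bigl(\sum_{i=1}^n d_{i,j}^2\Bigr)^{1/2}\Bigr\}.
\end{equation*}
Here I must be slightly careful: in Theorem \ref{theor_bi_Rad} the signs $\theta_{i,j}$ range over \emph{all} independent choices, whereas on the chaos side they are constrained to be symmetric ($\theta_{i,j}=\theta_{j,i}$). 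This only affects the direction ``$\mathsf{E}_\theta(\text{chaos})\gtrsim\text{RHS}$'' trivially (the lower bound comes from Lemma \ref{Le1} via a symmetric choice, so it is fine) and for the upper bound ``$\min_\theta(\text{chaos})\lesssim\text{RHS}$'' I note that a symmetric sign pattern is in particular one admissible pattern, so the minimum over symmetric patterns is at least the minimum over all patterns, which is $\asymp\text{RHS}$ — but I actually want the reverse inequality for the upper bound. The clean way around this is: the chain $\mathsf{E}_\theta\gtrsim\min_\theta\gtrsim\text{RHS}$ is immediate (Lemma \ref{Le1} again, after decoupling), and for the top inequality $\mathsf{E}_\theta\lesssim\text{RHS}$ I decouple first and then bound $\mathsf{E}_\theta\|\sum\theta_{i,j}d_{i,j}r_i\otimes r_j\|$ by the unrestricted expectation, which by Theorem \ref{theor_bi_Rad} (or directly Lemma \ref{Le2}) is $\asymp\text{RHS}$. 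Since restricting to symmetric signs can only decrease the expectation, $\mathsf{E}_\theta^{\mathrm{sym}}\le\mathsf{E}_\theta^{\mathrm{all}}\lesssim\text{RHS}$, closing the loop.

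Finally I would translate the $d$-quantities back into $a$-quantities. Since $d_{i,j}=a_{i,j}/2$ when $j>i$, $d_{i,j}=a_{j,i}/2$ when $j<i$ (with the convention $a_{j,i}:=a_{i,j}$ only as shorthand here — really $d_{i,j}=\tfrac12 a_{\min,\max}$), and $d_{i,i}=0$, one has for each fixed $i$ that $\sum_{j=1}^n d_{i,j}^2=\tfrac14\bigl(\sum_{j>i}a_{i,j}^2+\sum_{j<i}a_{j,i}^2\bigr)$, so $\sum_i(\sum_j d_{i,j}^2)^{1/2}=\tfrac12\sum_i\bigl(\sum_{j>i}a_{i,j}^2+\sum_{j<i}a_{j,i}^2\bigr)^{1/2}$; by symmetry of $D$ the row-sum and column-sum expressions are equal. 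A routine comparison (using $\sqrt{x+y}\asymp\sqrt{x}+\sqrt{y}$ up to the factor $\sqrt2$) shows
\begin{equation*}
\sum_{i=1}^n\Bigl(\sum_{j=1}^n d_{i,j}^2\Bigr)^{1/2}\asymp\sum_{i=1}^{n-1}\Bigl(\sum_{j=i+1}^n a_{i,j}^2\Bigr)^{1/2}+\sum_{j=2}^n\Bigl(\sum_{i=1}^{j-1}a_{i,j}^2\Bigr)^{1/2}\asymp\max\Bigl\{\sum_{i=1}^{n-1}\Bigl(\sum_{j=i+1}^n a_{i,j}^2\Bigr)^{1/2},\sum_{j=2}^n\Bigl(\sum_{i=1}^{j-1}a_{i,j}^2\Bigr)^{1/2}\Bigr\},
\end{equation*}
which is exactly the right-hand side of the theorem. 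Assembling the three equivalences — decoupling, Theorem \ref{theor_bi_Rad}, and this bookkeeping — yields the claim with universal constants. The only genuine subtlety, and the step I would write out most carefully, is the symmetric-versus-unrestricted-signs issue in the middle paragraph; everything else is a direct appeal to results already in hand plus elementary estimates.
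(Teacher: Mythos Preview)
Your overall reduction to Theorem \ref{theor_bi_Rad} via decoupling is the paper's strategy, and your lower bound (decouple in the direction $\|\text{tensor}\|\lesssim\|\text{chaos}\|$ via Corollary \ref{real theor_decoupling}, then apply Lemma \ref{Le1}, then translate $d$'s back to $a$'s) is exactly the paper's argument.

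For the upper bound, however, there is a gap: the assertion ``restricting to symmetric signs can only decrease the expectation'' ($\mathsf{E}_\theta^{\mathrm{sym}}\le\mathsf{E}_\theta^{\mathrm{all}}$) is not justified. Symmetric sign patterns form a subgroup of all sign patterns, and averaging a function over a subgroup is in general neither above nor below the full average; I do not see a quick Jensen or conditioning argument that gives the inequality here, and you have not supplied one. The paper avoids this issue entirely by \emph{not} symmetrizing for the upper direction. It uses the elementary diagonal inequality
\[
\Bigl\|\sum_{i<j}c_{i,j}\,r_ir_j\Bigr\|_{L_\infty[0,1]}\le\Bigl\|\sum_{i<j}c_{i,j}\,r_i\otimes r_j\Bigr\|_{L_\infty([0,1]^2)},
\]
valid for arbitrary $c_{i,j}$ since the chaos is the restriction of the tensor to the diagonal $u=v$, applies it with $c_{i,j}=\theta_{i,j}a_{i,j}$, and then invokes Theorem \ref{theor_bi_Rad} directly on the strictly upper-triangular matrix $(a_{i,j})_{i<j}$ with fully independent signs. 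If you prefer to salvage your route through the symmetrized array, the easy repair is the triangle inequality: after decoupling, split $\sum_{i,j}\theta_{i,j}d_{i,j}\,r_i\otimes r_j$ into its upper- and lower-triangular parts; each part involves only the independent signs $\theta_{i,j}$, $i<j$, and is bounded by Lemma \ref{Le2}, yielding $\mathsf{E}^{\mathrm{sym}}\lesssim\text{RHS}$ with a harmless extra factor of $2$.
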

\begin{proof} Setting $b_{i,j}=a_{i,j}/2$ if $i<j$, $b_{i,j}=b_{j,i}$ if $i>j$ and $b_{i,i}=0$, and applying Corollary \ref{real theor_decoupling} in the case $d=2$, we have 
%{\color{red}
%\begin{eqnarray*}
%\frac{1}{12}\Bigl\|\sum_{i=1}^n\sum_{j=1}^n b_{i,j}r_ir_j\Bigr\|_{L_\infty}&\leqslant& \Bigl\|\sum_{i=1}^n\sum_{j=1}^n b_{i,j}r_i\otimes r_j\Bigr\|_{L_\infty([0,1]^2)}\\&\leqslant& 4 \Bigl\|\sum_{i=1}^n\sum_{j=1}^n b_{i,j}r_ir_j\Bigr\|_{L_\infty}.
%\end{eqnarray*}}
%{\color{blue}
$$
\Bigl\|\sum_{i=1}^n\sum_{j=1}^n b_{i,j}r_i\otimes r_j\Bigr\|_{L_\infty([0,1]^2)}\leqslant 4 \Bigl\|\sum_{i=1}^n\sum_{j=1}^n b_{i,j}r_ir_j\Bigr\|_{L_\infty}.
$$
Hence, by Lemma \ref{Le1}
\begin{eqnarray*}
\Bigl\|\sum_{i=1}^n\sum_{j=1}^n b_{i,j}r_ir_j\Bigr\|_{L_\infty}&\geqslant&  \frac{1}{4\sqrt{2}}\max\Big\{\sum_{i=1}^n\Big(\sum_{j=1}^n b_{i,j}^2\Big)^{1/2},\sum_{j=1}^n\Big(\sum_{i=1}^{n} b_{i,j}^2\Big)^{1/2}\Big\}\\&=&\frac{1}{4\sqrt{2}}\sum_{i=1}^n\Big(\sum_{j=1}^n b_{i,j}^2\Big)^{1/2}.
\end{eqnarray*}
Since
\begin{eqnarray*}
\sum_{i=1}^n\Big(\sum_{j=1}^n b_{i,j}^2\Big)^{1/2}&\geqslant& \max\Big\{\sum_{i=1}^{n}\Big(\sum_{j=i+1}^n b_{i,j}^2\Big)^{1/2}, \sum_{i=1}^n\Big(\sum_{j=1}^{i-1} b_{i,j}^2\Big)^{1/2}\Big\}\\
&=&\frac{1}{4} \max\Big\{\sum_{i=1}^{n-1}\Big(\sum_{j=i+1}^n a_{i,j}^2\Big)^{1/2}, \sum_{j=2}^n\Big(\sum_{i=1}^{j-1} a_{i,j}^2\Big)^{1/2}\Big\}
\end{eqnarray*}
and, by the definition of coefficients $b_{i,j}$,
$$
\sum_{i=1}^n\sum_{j=1}^n b_{i,j}r_i(t)r_j(t)=\sum_{i=1}^n\sum_{j=i+1}^n a_{i,j}r_i(t) r_j(t),\;\;0\le t\le 1,$$
this implies that
\begin{equation}
\label{below est}
\Bigl\|\sum_{i=1}^n\sum_{j=i+1}^n a_{i,j}r_i r_j\Bigr\|_{L_\infty}\ge\frac{1}{16\sqrt{2}}\max\Big\{\sum_{i=1}^{n-1}\Big(\sum_{j=i+1}^n a_{i,j}^2\Big)^{1/2}, \sum_{j=2}^n\Big(\sum_{i=1}^{j-1} a_{i,j}^2\Big)^{1/2}\Big\}.
\end{equation}

On the other hand, obviously, for any $c_{i,j}\in\mathbb{R}$
$$
\Bigl\|\sum_{i=1}^n\sum_{j=1}^n c_{i,j}r_i r_j\Bigr\|_{L_\infty}\le \Bigl\|\sum_{i=1}^n\sum_{j=1}^n c_{i,j}r_i\otimes r_j\Bigr\|_{L_\infty([0,1]^2)}.$$
Therefore, by Theorem \ref{theor_bi_Rad},
\begin{eqnarray*}
\mathsf{E}\,_\theta\Bigl\|\sum_{i=1}^n\sum_{j=i+1}^n \theta_{i,j}a_{i,j}r_i r_j\Bigr\|_{L_\infty}&\le&\mathsf{E}\,_\theta\Bigl\|\sum_{i=1}^n\sum_{j=i+1}^n \theta_{i,j}a_{i,j}r_i\otimes r_j\Bigr\|_{L_\infty([0,1]^2)}\\&\le&
C\max\Big\{\sum_{i=1}^{n-1}\Big(\sum_{j=i+1}^n a_{i,j}^2\Big)^{1/2}, \sum_{j=2}^n\Big(\sum_{i=1}^{j-1} a_{i,j}^2\Big)^{1/2}\Big\}
\end{eqnarray*}
with some universal constant $C>0$. Combining this estimate with inequality \eqref{below est}, we complete the proof.

\end{proof}

Similarly, as for the multiple Rademacher system (see Corollary \ref{cor_cut_norm} in Section \ref{Sec_mult_Rad}), from Theorem \ref{theor_chaos} and equivalence \eqref{eq_square_cut_norm} we infer the following result for modified matrix $cut$-norms.

\begin{cor}
\label{cor_square_norm}
For each strictly upper triangular matrix $A=(a_{i,j})_{1\le i<j\leqslant n}$ with universal  constants it holds:
$$
\mathsf{E}_\theta{\|(\theta_{i,j}a_{i,j})\|}_{cut}^*\asymp \min_{\theta_{i,j}=\pm 1}{\|(\theta_{i,j}a_{i,j})\|}_{cut}^*\asymp \max\Big\{\sum_{i=1}^{n-1}\Big(\sum_{j={i+1}}^n a_{i,j}^2\Big)^{1/2},\sum_{j=2}^n\Big(\sum_{i=1}^{j-1} a_{i,j}^2\Big)^{1/2}\Big\}. 
$$
\end{cor}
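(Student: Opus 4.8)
The plan is to derive Corollary \ref{cor_square_norm} directly from the already-established equivalence \eqref{eq_square_cut_norm}, which identifies $\|\sum_{i<j}c_{i,j}r_ir_j\|_{L_\infty}$ with $\|C\|_{cut}^*$ up to universal constants, combined with the random-unconditionality estimates of Theorem \ref{theor_chaos}. The point is that all three quantities in the statement should be read off by substituting the matrix with random signs into these two facts.

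First I would observe that equivalence \eqref{eq_square_cut_norm} applies verbatim to the matrix $(\theta_{i,j}a_{i,j})$ for any fixed choice of signs $\theta_{i,j}=\pm1$: there are universal constants $c,C>0$ with
\begin{equation*}
c\,{\|(\theta_{i,j}a_{i,j})\|}_{cut}^*\le \Bigl\|\sum_{i=1}^n\sum_{j=i+1}^n \theta_{i,j}a_{i,j}r_ir_j\Bigr\|_{L_\infty}\le C\,{\|(\theta_{i,j}a_{i,j})\|}_{cut}^*.
\end{equation*}
Taking expectation over $\theta$ gives $\mathsf{E}_\theta{\|(\theta_{i,j}a_{i,j})\|}_{cut}^*\asymp \mathsf{E}_\theta\bigl\|\sum_{i<j}\theta_{i,j}a_{i,j}r_ir_j\bigr\|_{L_\infty}$, and taking the minimum over $\theta$ gives $\min_{\theta}{\|(\theta_{i,j}a_{i,j})\|}_{cut}^*\asymp \min_{\theta}\bigl\|\sum_{i<j}\theta_{i,j}a_{i,j}r_ir_j\bigr\|_{L_\infty}$. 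Then Theorem \ref{theor_chaos} asserts that both of these $L_\infty$-expressions are equivalent to $\max\bigl\{\sum_{i=1}^{n-1}(\sum_{j=i+1}^n a_{i,j}^2)^{1/2},\sum_{j=2}^n(\sum_{i=1}^{j-1} a_{i,j}^2)^{1/2}\bigr\}$. Chaining these equivalences yields the claim, with constants depending only on the absolute constants in \eqref{eq_square_cut_norm} and in Theorem \ref{theor_chaos}.

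The only genuine subtlety — and what I would flag as the main point to check — is that Theorem \ref{theor_chaos} is stated for a coefficient array $a_{i,j}$, whereas here I am applying the equivalence ``$L_\infty$-norm $\asymp\|\cdot\|_{cut}^*$'' to the signed array $\theta_{i,j}a_{i,j}$; one must make sure the $\ell^2$-type right-hand side is unchanged under the substitution $a_{i,j}\mapsto\theta_{i,j}a_{i,j}$. This is immediate: $(\theta_{i,j}a_{i,j})^2=a_{i,j}^2$, so the inner sums $\sum_{j}(\theta_{i,j}a_{i,j})^2=\sum_j a_{i,j}^2$ and likewise for the column sums are invariant, hence so is the $\max$ of the two mixed $\ell^1(\ell^2)$ norms. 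Thus no new estimate is needed — the corollary is a formal consequence of \eqref{eq_square_cut_norm} and Theorem \ref{theor_chaos}, exactly parallel to how Corollary \ref{cor_cut_norm} follows from \eqref{eq_rad_cut_norm} and Theorem \ref{theor_bi_Rad}. I would write the proof in two or three lines, simply citing \eqref{eq_square_cut_norm} and Theorem \ref{theor_chaos} and noting the sign-invariance of the right-hand side.
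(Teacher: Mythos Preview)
Your proposal is correct and matches the paper's approach exactly: the paper states that Corollary~\ref{cor_square_norm} follows from Theorem~\ref{theor_chaos} and equivalence~\eqref{eq_square_cut_norm}, in parallel with how Corollary~\ref{cor_cut_norm} was obtained from Theorem~\ref{theor_bi_Rad} and~\eqref{eq_rad_cut_norm}. Your observation about the sign-invariance of the $\ell^1(\ell^2)$ quantity is the only detail worth noting, and it is indeed trivial.
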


\section{RUC-property of the multiple Rademacher system and the Rademacher chaos of arbitrary order in $L_\infty$}
\label{Sec_mult_d_Rad}

Here, we extend the results proved in two previous sections to the systems of an arbitrary multiplicity.

%\begin{eqnarray}
%\label{upper est}
%&\;&\int_0^1\Big\|\sum_{\vec{j}\in \mathbb{N}^{d}_{n}}a_{\vec{j}}r_{\vec{j}}(\omega){\rm r}_{\vec{j}}^{\otimes}(\vec{t})\Big\|_{L_\infty([0,1]^d}\,d\omega\nonumber\\&\leqslant & \sum_{j_{d}=1}^n\Big(\sum_{\vec{j_d}'\in \mathbb{N}^{d-1}_{n}}a_{\vec{j}}^2\Big)^{1/2}+\dots + 2^{d-1}\cdot\sum_{j_{1}=1}^n\Big(\sum_{\vec{j_1}'\in \mathbb{N}^{d-1}_{n}}a_{\vec{j}}^2\Big)^{1/2}.
%\end{eqnarray}

We introduce first some additional notation. Let $d\in \mathbb{N}$. $d\ge 2$.
% we let
%$$
%\mathbb{N}^d:=\{{\vec{j}}=(j_1,j_{2},\ldots,j_d):\, j_k\in\mathbb{N},\, k=1,2,\ldots,d \},
%$$
%$$
%\mathbb{N}_n^d:=\{{\vec{j}}=(j_1,j_{2},\ldots,j_d)\in\mathbb{N}^d:\, j_k\leqslant n,\, k=1,2,\ldots,d \},
%$$
%and, for ${\vec{j}}=(j_1,j_{2},\ldots,j_d)\in\mathbb{N}^d$,
%$$
%{\rm r}_{{\vec{j}}}^{\otimes}(\vec{t})=r_{j_1}(t_1)\cdot r_{j_2}(t_2)\cdot\dots\cdot r_{j_d}(t_d),\;\;\mbox{where}\;\vec{t}=(t_1,t_2,\dots,t_d)\in [0,1]^d.$$
%Next, for each $k=1,\dots,d$,} {\color{blue} 
For every $\vec{j}\in\mathbb{N}^d$, $k=1,2,\ldots, d$ and $t\in[0,1]^d$ we define ${\vec{j_k'}}\in \mathbb{N}^{d-1}$ and ${\vec{t_k'}}\in [0,1]^{d-1}$ as follows:
$$
{\vec{j_k'}}=(j_1,\ldots,j_{k-1},j_{k+1},\ldots,j_d)\;\;\mbox{and}\;\;{\vec{t_k'}}=(t_1,\ldots,t_{k-1},t_{k+1},\ldots,t_d)$$
and
$$
{\rm r}_{{\vec{j_k}}'}^{\otimes}({\vec{t}_k}'):=r_{j_1}(t_1)\cdot \dots\cdot r_{j_{k-1}}(t_{k-1})\cdot r_{j_{k+1}}(t_{k+1})\cdot\dots\cdot r_{j_d}(t_d).
$$
Moreover, for every $d,n\in\mathbb{N}$, $k=1,2,\ldots, d$ and $l=1,2,\ldots, n$ we put
$$
\mathbb{N}_n^d(k,l):=\{\vec{j}=(j_1,\dots,j_d)\in \mathbb{N}_n^d:\,j_{k}=l\}.$$

\begin{theorem}
\label{theor_mult_Rad}
For every $d\in \mathbb{N}$ the multiple Rademacher sequence $\{{\rm r}_{{\vec{j}}}^{\otimes}\}_{\vec{j}\in \mathbb{N}^d}$ has the RUC property in the space $L_\infty([0,1]^d)$. More precisely, for all $n\in\mathbb{N}$ and $a_{\vec{j}}\in\mathbb{R}$, $\vec{j}\in\mathbb{N}_n^d$, the following inequalities hold
\begin{equation}
\label{lower est}
\Big\|\sum_{\vec{j}\in \mathbb{N}^{d}_{n}}a_{\vec{j}} {\rm r}_{\vec{j}}^{\otimes}\Big\|_{L_\infty([0,1]^d)}\geqslant c_d\cdot\max_{k\in [d]}\sum_{l=1}^n \Big(\sum_{{\vec{j}\in \mathbb{N}^{d}_{n}(k,l)}}a_{{\vec{j}}}^2\Big)^{1/2},
\end{equation}
where the constant $c_d$ depends only on $d$, and
\begin{eqnarray}
\label{upper est}
\mathsf{E}_{\theta}\Big\|\sum_{\vec{j}\in \mathbb{N}^{d}_{n}}a_{\vec{j}}\theta_{\vec{j}}{\rm r}_{\vec{j}}^{\otimes}\Big\|_{L_\infty([0,1]^d)}&\leqslant& \sum_{l=1}^n\Big(\sum_{{\vec{j}\in \mathbb{N}^{d}_{n}(d,l)}}a_{\vec{j}}^2\Big)^{1/2}+\dots\nonumber\\ &+& 2^{d-1}\cdot\sum_{l=1}^n\Big(\sum_{{\vec{j}\in \mathbb{N}^{d}_{n}(1,l)}}a_{\vec{j}}^2\Big)^{1/2}.
\end{eqnarray}

%\begin{eqnarray}
%\label{upper est}
%&\;&\int_0^1\Big\|\sum_{\vec{j}\in \mathbb{N}^{d}_{n}}a_{\vec{j}}r_{\vec{j}}(\omega){\rm r}_{\vec{j}}^{\otimes}(\vec{t})\Big\|_{L_\infty([0,1]^d}\,d\omega\nonumber\\&\leqslant & \sum_{j_{d}=1}^n\Big(\sum_{\vec{j_d}'\in \mathbb{N}^{d-1}_{n}}a_{\vec{j}}^2\Big)^{1/2}+\dots + 2^{d-1}\cdot\sum_{j_{1}=1}^n\Big(\sum_{\vec{j_1}'\in \mathbb{N}^{d-1}_{n}}a_{\vec{j}}^2\Big)^{1/2}.
%\end{eqnarray}

\end{theorem}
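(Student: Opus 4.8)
The strategy is to decouple once, pass to the $d$-linear (tensor) setting with the first variable singled out, use Fubini to reduce the $L_\infty$-norm in that variable to an $\ell_1^n$-norm of slices, and then iterate: each slice is itself a sum of Rademacher functions over $d-1$ variables, and on it the coefficients behave like the entries of a matrix whose two ``sides'' after the next application of Theorem \ref{theor_bi_Rad} produce the square-function expression over one of the remaining coordinate directions. The lower bound \eqref{lower est} should come essentially for free by the same Fubini-and-Khintchine argument that proves Lemma \ref{Le1}; the upper bound \eqref{upper est} is where the real work (and the constant $2^{d-1}$) lives, and I expect the induction bookkeeping there to be the main obstacle.

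For the lower bound, fix $k\in[d]$. For $\vec t\in[0,1]^d$ write
$$
\Big\|\sum_{\vec j\in\mathbb N_n^d}a_{\vec j}\,{\rm r}_{\vec j}^\otimes\Big\|_{L_\infty([0,1]^d)}
=\sup_{\vec t}\,\Big|\sum_{\vec j}a_{\vec j}\,{\rm r}_{\vec j}^\otimes(\vec t)\Big|
\ \geqslant\ \sup_{\vec t_k'}\ \sum_{l=1}^n\Big|\sum_{\vec j\in\mathbb N_n^d(k,l)}a_{\vec j}\,{\rm r}_{{\vec j_k'}}^\otimes(\vec t_k')\Big|,
$$
where the inequality is obtained by choosing, for each $l$, the sign $r_l(t_k)=\pm1$ that makes the $l$-th inner sum nonnegative (possible because the Rademacher functions in the $t_k$-variable realize every sign pattern). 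Replacing the supremum over $\vec t_k'$ by the integral over $[0,1]^{d-1}$ and applying Szarek's inequality \eqref{basic1} (in the form already used in Lemma \ref{Le1}, and valid with the same constant $1/\sqrt2$ for the multiple system by \eqref{Bonami}, or directly coordinate by coordinate) to each of the $n$ inner sums gives
$$
\Big\|\sum_{\vec j\in\mathbb N_n^d}a_{\vec j}\,{\rm r}_{\vec j}^\otimes\Big\|_{L_\infty([0,1]^d)}
\ \geqslant\ \frac1{\sqrt2}\sum_{l=1}^n\Big(\sum_{\vec j\in\mathbb N_n^d(k,l)}a_{\vec j}^2\Big)^{1/2},
$$
and taking the maximum over $k\in[d]$ yields \eqref{lower est} with $c_d=1/\sqrt2$ (or whatever the iterated constant turns out to be).

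For the upper bound I would argue by induction on $d$, the base case $d=2$ being Theorem \ref{theor_bi_Rad}. Fix a sign array $\theta_{\vec j}=\pm1$. By Fubini in the variable $t_1$,
$$
\Big\|\sum_{\vec j\in\mathbb N_n^d}a_{\vec j}\theta_{\vec j}\,{\rm r}_{\vec j}^\otimes\Big\|_{L_\infty([0,1]^d)}
=\sup_{t_1}\ \Big\|\sum_{l=1}^n r_l(t_1)\ \sum_{\vec j\in\mathbb N_n^d(1,l)}a_{\vec j}\theta_{\vec j}\,{\rm r}_{{\vec j_1'}}^\otimes\Big\|_{L_\infty([0,1]^{d-1})},
$$
and since the sup over $t_1$ only selects a sign pattern $(r_l(t_1))_{l=1}^n$, this is at most $\max_{\epsilon\in\{-1,1\}^n}\|\sum_l\epsilon_l\,F_l\|_{L_\infty([0,1]^{d-1})}$ where $F_l:=\sum_{\vec j\in\mathbb N_n^d(1,l)}a_{\vec j}\theta_{\vec j}\,{\rm r}_{{\vec j_1'}}^\otimes$. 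Now observe that $\sum_l\epsilon_l F_l$ is again a signed multiple-Rademacher sum of order $d-1$ in the $n$ remaining ``outer'' indices but with an extra index $l$ attached to the coefficients; the point is that, because the functions $r_l(t_1)$ can be reabsorbed, the whole expression $\max_\epsilon\|\sum_l\epsilon_l F_l\|$ is itself (up to the universal constant from Theorem \ref{theor_bi_Rad} applied with one ``matrix side'' being the index $l$ and the other being any fixed one of the $d-1$ remaining directions) controlled by a square function over the $l$-direction plus a term to which the inductive hypothesis applies in $d-1$ variables. Concretely: separate the coordinate $k=1$ from $k\in\{2,\dots,d\}$, use Theorem \ref{theor_bi_Rad} on the pair (first coordinate, $d$-th coordinate) to peel off the summand $2^{d-1}\sum_l(\sum_{\vec j\in\mathbb N_n^d(1,l)}a_{\vec j}^2)^{1/2}$, and apply the induction hypothesis to the ``diagonal'' part in the remaining $d-1$ coordinates to generate the terms $\sum_l(\sum_{\vec j\in\mathbb N_n^d(k,l)}a_{\vec j}^2)^{1/2}$ for $k=2,\dots,d$ with their respective powers of $2$. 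The main obstacle is organizing this peeling so that the recursion on the constants closes exactly to give the stated coefficients $1,2,\dots,2^{d-1}$: one must be careful that passing from $d$ to $d-1$ variables by fixing $t_1$ does not lose a factor in each of the $d-1$ inherited terms, and that the decoupling constant $C_d(1)$ (needed to return from the tensor statement to the chaos, if one wants the chaos corollary) is applied only once at the end rather than at every level. I would handle this by first proving the tensor statement \eqref{upper est} cleanly by the Fubini-plus-Theorem \ref{theor_bi_Rad} induction, and only afterward invoke Corollary \ref{real theor_decoupling} to transfer it to the chaos.
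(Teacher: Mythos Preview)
Your lower-bound argument is fine and coincides with the paper's: fix a coordinate~$k$, pull out the $t_k$-maximum to produce the $\ell_1$-sum over~$l$, replace the remaining sup by an integral, and apply the $L_1$--Khintchine inequality (the paper uses Bonami's constant $A_{1,d}$; your $1/\sqrt 2$ would need iteration, but either way one gets some $c_d$).

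The upper bound, however, has a real gap. After you peel off the variable $t_1$ you are looking at
\[
\mathsf E_\theta\,\max_{\epsilon\in\{\pm1\}^n}\Big\|\sum_{l=1}^n\epsilon_l\sum_{\vec j'\in\mathbb N_n^{d-1}}a_{l,\vec j'}\theta_{l,\vec j'}\,{\rm r}^\otimes_{\vec j'}\Big\|_{L_\infty([0,1]^{d-1})},
\]
and neither tool you invoke applies. The induction hypothesis (the $(d-1)$-case of~\eqref{upper est}) controls an expectation over signs attached to \emph{each} $\vec j'\in\mathbb N_n^{d-1}$, but here the signs are indexed by $(l,\vec j')\in\mathbb N_n^d$ and there is an \emph{outer} $\max_\epsilon$ sitting between $\mathsf E_\theta$ and the norm; you cannot push that max past the expectation, and you cannot absorb the extra index $l$ into a $(d-1)$-dimensional sign array. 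Likewise, Theorem~\ref{theor_bi_Rad} is a statement about $\mathsf E_\theta\|(\theta_{i,j}a_{i,j})\|$ for \emph{deterministic} $a_{i,j}$; here the ``coefficients'' in the $(t_1,t_d)$-matrix depend on $\theta$ and on $t_2,\dots,t_{d-1}$, so it does not apply either. In short, induction on the theorem statement itself does not close.

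The paper's proof recurses on a different quantity. Writing the norm as
\[
\mathsf E_\omega\max_{x^1,\dots,x^d\in\{\pm1\}^n}\sum_{\vec j}x_{j_1}^1\cdots x_{j_d}^d\,a_{\vec j}r_{\vec j}(\omega),
\]
it peels off the $x^d$-variable (not a $t$-variable) by centering, symmetrizing with an independent copy, and applying Talagrand's contraction principle twice: once to drop the absolute value $|\cdot|$, once to drop the auxiliary Rademacher signs $\epsilon_l$. This produces one square-function term $\sum_l(\sum_{\vec j\in\mathbb N_n^d(d,l)}a_{\vec j}^2)^{1/2}$ plus \emph{twice} the same expression with only $x^1,\dots,x^{d-1}$ left --- and that residual expression has exactly the same form, so the argument iterates and the powers of $2$ accumulate to $1,2,\dots,2^{d-1}$. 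The missing ingredient in your plan is precisely this symmetrization/contraction step, which is what lets one remove a maximizing sign-vector while keeping the randomness $\theta$ (equivalently $r_{\vec j}(\omega)$) intact for the next round.
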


\begin{proof}
First, we prove estimate \eqref{lower est}. 

Let $k\in [d]$ be arbitrary.
%Let
%$$
%S_{a,n}(\vec{t}):=\sum_{\vec{j}\in \mathbb{N}^{d}_{n}}a_{\vec{j}} {\rm r}_{\vec{j}}^{\otimes}(\vec{t}),\;\;{\vec t}\in [0,1]^{d},
%$$
%and let $k\in [d]$ be arbitrary. 
By choosing $t_{k}\in[0,1]$ properly, we obtain
$$
\Big\|\sum_{\vec{j}\in \mathbb{N}^{d}_{n}}a_{\vec{j}} {\rm r}_{\vec{j}}^{\otimes}\Big\|_{L_\infty([0,1]^d)} =\sup_{{\vec{t}}\in [0,1]^{d}}\Big\vert\sum_{\vec{j}\in \mathbb{N}^{d}_{n}}a_{\vec{j}} {\rm r}_{{\vec{j}}}^{\otimes}({\vec{t}})\Big\vert=
\sup_{{\vec{t}_k}'\in [0,1]^{d-1}}\sum_{l=1}^n \Big\vert\sum_{\vec{j}\in \mathbb{N}^{d}_{n}(k,l)}a_{\vec{j}} {\rm r}_{{\vec{j_k}}'}^{\otimes}({\vec{t}_k}')\Big\vert.
$$
Hence, by Bonami's inequality \eqref{Bonami} for the case $p=1$, we infer
%или, в варианте неразделенного хаоса, \cite[лемма 6]{AL_AA}),
% а в случае $d=2$, предложение~2.d.6 в \cite{LT} или задачу IX.3.6 в \cite{MLP})  
\begin{eqnarray}
\Big\|\sum_{\vec{j}\in \mathbb{N}^{d}_{n}}a_{\vec{j}} {\rm r}_{\vec{j}}^{\otimes}\Big\|_{L_\infty([0,1]^d)} &\geqslant &\int_0^1\ldots\int_0^1\sum_{l=1}^n \Big\vert\sum_{\vec{j}\in \mathbb{N}^{d}_{n}(k,l)}a_{\vec{j}}{\rm r}_{{\vec{j_k}}'}^{\otimes}({\vec{t}_k}')\Big\vert \,dt_1\ldots \,dt_{k-1}\,dt_{k+1}\ldots \,dt_d\nonumber\\
%&=&\sum_{j_{k}=1}^n\int_0^1\ldots\int_0^1\Big\vert\sum_{\vec{j}\in \mathbb{N}^{d}_{n}(k,l)}a_{\vec{j}} {\rm r}_{{\vec{j_k}}'}^{\otimes}({\vec{t}_k}')\Big\vert \,dt_1dt_2\ldots \,dt_{k-1}\,dt_{k+1}\ldots dt_d\nonumber\\
&\geqslant& A_{1,d}\sum_{l=1}^n \Big(\sum_{{\vec{j}\in \mathbb{N}^{d}_{n}(k,l)}}a_{{\vec{j}}}^2\Big)^{1/2},\;\;k=1,2,\dots,d,
\label{first est}
\end{eqnarray}
which implies inequality \eqref{lower est}.

Let us now proceed with the proof of estimate \eqref{upper est}. Further, for technical reasons, it will be more convenient to deal with averaging generated by Rademacher functions rather than that over arrangements of signs. To this end, we observe that
\begin{equation}
\label{extra not}
\mathsf{E}\,_{\theta}\Big\|\sum_{\vec{j}\in \mathbb{N}^{d}_{n}}a_{\vec{j}}\theta_{\vec{j}}{\rm r}_{\vec{j}}^{\otimes}\Big\|_{L_\infty([0,1]^d)}=
\mathsf{E}_\omega\|S_{n}(\cdot,\omega)\|_{L_\infty([0,1]^d)},
\end{equation}
where
$$
S_{n}(\vec{t},\omega):=\sum_{\vec{j}\in \mathbb{N}^{d}_{n}}a_{\vec{j}}r_{\vec{j}}(\omega){\rm r}_{\vec{j}}^{\otimes}(\vec{t}),\;\;\vec{t}\in [0,1]^d,\,\omega\in [0,1]$$
(here, $r_{\vec{j}}(\omega)$ are the first $n^d$ Rademacher functions enumerated arbitrarily by multi-indices $\vec{j}\in\mathbb{N}_n^d$).

Next, denoting $z_{\vec{j}}(\omega):=a_{\vec{j}}r_{\vec{j}}(\omega)$ if $\vec{j}=(j_1,\dots,j_d)\in \mathbb{N}^{d}_{n}$ and $\omega\in[0,1]$, we have
%$$
%\int_0^1\Big\|\sum_{\vec{j}\in \mathbb{N}^{d}_{n}}a_{\vec{j}}r_{\vec{j}}(\omega){\rm r}_{\vec{j}}^{\otimes}\Big\|_{L_\infty([0,1]^d)}\,d\omega,$$
%where $\{r_{\vec{j}}(\omega)\}_{\vec{j}\in\mathbb{N}_n^d}$ is the sequence of the first $n^d$ Rademacher functions, arbitrarily numbered by the vectors $\vec{j}\in\mathbb{N}_n^d$.
\begin{equation*}
{\|S_{n}(\cdot,\omega)\|}_{L_\infty([0,1]^d)}=\max_{x^i\in\mathcal{E}}\sum_{\vec{j}\in \mathbb{N}^{d}_{n}}x_{j_1}^1x_{j_2}^2\dots x_{j_d}^d z_{\vec{j}}(\omega),
\end{equation*}
%\begin{eqnarray*}
%{\|S_{n}(\vec{t},\omega)\|}_{L_\infty([0,1]^d)}&=&\sup_{\|x^i\|_{\ell_\infty^n}\leqslant 1,i\leqslant d}\sum_{\vec{j}\in \mathbb{N}^{d}_{n}}z_{\vec{j}}x_{j_1}^1x_{j_2}^2\dots x_{j_d}^d\\
%&=&\max_{x^i\in\mathcal{E},i\leqslant d}\sum_{\vec{j}\in \mathbb{N}^{d}_{n}}z_{\vec{j}}x_{j_1}^1x_{j_2}^2\dots x_{j_d}^d,
%\end{eqnarray*}
where each of the vectors $x^i=(x_l^i)_{l=1}^n$, $i=1,2,\dots,d$, runs through the set $\mathcal{E}$ of all arrangements of signs, i.e., $x_l^i=\pm 1$, $l=1,2,\dots,n$.
%the vectors of the form $(\eta_1, \eta_2,\ldots \eta_n)$, $\eta_j=\pm 1$, $j=1,2,\dots,n$. 
Hence,
\begin{equation}\label{EQ1}
\mathsf{E}_\omega\|S_{n}(\cdot,\omega)\|_{L_\infty([0,1]^d)}=\mathsf{E}_\omega \max_{x^i\in\mathcal{E}}\sum_{\vec{j}\in \mathbb{N}^{d}_{n}}x_{j_1}^1x_{j_2}^2\dots x_{j_d}^d z_{\vec{j}}(\omega),
\end{equation}
or, as above,
$$
\mathsf{E}_\omega\|S_{n}(\cdot,\omega)\|_{L_\infty([0,1]^d)}=\mathsf{E}_\omega \max_{x^i\in\mathcal{E}}\sum_{l=1}^n\Big|\sum_{\vec{j}\in \mathbb{N}^{d}_{n}(d,l)}x_{j_1}^1x_{j_2}^2\dots x_{j_{d-1}}^{d-1}z_{\vec{j}}(\omega)\Big|.$$
%$$
%\mathsf{E}_\omega\|S_{a,n}(\vec{t},\omega)\|_{L_\infty([0,1]^d)}=\mathsf{E}_\omega \max_{x^i\in\mathcal{E},i\leqslant d-1}\sum_{l=1}^n\Big|\sum_{\vec{j_d}'\in \mathbb{N}^{d-1}_{n}}z_{\vec{j}}x_{j_1}^1x_{j_2}^2\dots x_{j_{d-1}}^{d-1}\Big|.$$
Centering the expression from the right-hand side, we obtain 
%{\color{red}
%\begin{eqnarray}
%\label{general est}
%{\rm Ave}&\leqslant& \mathsf{E}_\omega \max_{x^i\in\mathcal{E},i\leqslant d-1}\sum_{l=1}^n\left(\Big|\sum_{\vec{j_d}'\in \mathbb{N}^{d-1}_{n}}x_{j_1}^1x_{j_2}^2\dots x_{j_{d-1}}^{d-1}z_{\vec{j}}(\omega)\Big|-\mathsf{E}_\omega\Big|\sum_{\vec{j}\in \mathbb{N}^{d-1}_{n}}x_{j_1}^1x_{j_2}^2\dots x_{j_{d-1}}^{d-1}z_{\vec{j}}(\omega)\Big|\right)\nonumber\\
%\label{general est}
%&+&\max_{x^i\in\mathcal{E},i\leqslant d-1}\sum_{l=1}^n\mathsf{E}_\omega\Big|\sum_{\vec{j_d}'\in \mathbb{N}^{d-1}_{n}}x_{j_1}^1x_{j_2}^2\dots x_{j_{d-1}}^{d-1}z_{\vec{j}}(\omega)\Big|=(I)+(II).
%\label{general est}
%\end{eqnarray}
%}
%{\color{blue}
\begin{equation}
\label{general est}
\mathsf{E}_\omega\|S_{n}(\cdot,\omega)\|_{L_\infty([0,1]^d)}\leqslant A_1+A_2,
\end{equation}
where
$$
A_1:=\mathsf{E}_\omega \max_{x^i\in\mathcal{E}}\sum_{l=1}^n\biggl(\Big|\sum_{\vec{j}\in \mathbb{N}^{d}_{n}(d,l)}x_{j_1}^1x_{j_2}^2\dots x_{j_{d-1}}^{d-1}z_{\vec{j}}(\omega)\Big|-\mathsf{E}_\omega\Big|\sum_{\vec{j}\in \mathbb{N}^{d}_{n}(d,l)}x_{j_1}^1x_{j_2}^2\dots x_{j_{d-1}}^{d-1}z_{\vec{j}}(\omega)\Big|\biggr)
$$
and
$$
A_2:=\max_{x^i\in\mathcal{E}}\sum_{l=1}^n\mathsf{E}_\omega\Big|\sum_{\vec{j}\in \mathbb{N}^{d}_{n}(d,l)}x_{j_1}^1x_{j_2}^2\dots x_{j_{d-1}}^{d-1}z_{\vec{j}}(\omega)\Big|.$$

Let us first estimate the term $A_2$. Since the system $\{r_{\vec{j}}\}$ is orthonormal, for each $l=1,2,\dots,n$ we obtain
\begin{eqnarray*}
\mathsf{E}_\omega\Big|\sum_{\vec{j}\in \mathbb{N}^{d}_{n}(d,l)}x_{j_1}^1x_{j_2}^2\dots x_{j_{d-1}}^{d-1}z_{\vec{j}}(\omega)\Big| &=&
\mathsf{E}_\omega\Big|\sum_{\vec{j}\in \mathbb{N}^{d}_{n}(d,l)}a_{\vec{j}}x_{j_1}^1x_{j_2}^2\dots x_{j_{d-1}}^{d-1}r_{\vec{j}}(\omega)\Big|\nonumber\\ &\leqslant &
\Big(\sum_{\vec{j}\in \mathbb{N}^{d}_{n}(d,l)}(a_{\vec{j}}x_{j_1}^1x_{j_2}^2\dots x_{j_{d-1}}^{d-1})^2\Big)^{1/2}\nonumber\\&=&
\Big(\sum_{\vec{j}\in \mathbb{N}^{d}_{n}(d,l)}a_{\vec{j}}^2\Big)^{1/2},
\end{eqnarray*}
which implies that
\begin{equation}
A_2\leqslant \sum_{l=1}^n\Big(\sum_{\vec{j}\in \mathbb{N}^{d}_{n}(d,l)}a_{\vec{j}}^2\Big)^{1/2}.
\label{general-1 est}
\end{equation}

%\begin{equation}
%\max_{x^i\in\mathcal{E},i\leqslant d-1}\sum_{l=1}^n\mathsf{E}_\omega\Big|\sum_{\vec{j_d}'\in \mathbb{N}^{d-1}_{n}}z_{\vec{j}}x_{j_1}^1x_{j_2}^2\dots x_{j_{d-1}}^{d-1}\Big| \leqslant \sum_{l=1}^n\Big(\sum_{\vec{j_d}'\in \mathbb{N}^{d-1}_{n}}a_{\vec{j}}^2\Big)^{1/2}.
%\label{general-1 est}
%\end{equation}

To estimate $A_1 $, we apply a symmetrization trick and the well-known Talagrand's contraction principle. Denote by $\tilde{z}_{\vec{j}}$ independent copies of the random variables $z_{\vec{j}}$, $\vec{j}\in \mathbb{N}^{d}_{n}$. We can assume that $\tilde{z}_{\vec{j}}={z}_{\vec{j}}(\tilde{\omega})$, where $(\omega,\tilde{\omega})\in [0,1]^2$. By $(\epsilon_{l})_{l=1}^n$ we denote a sequence of "random"\:signs (that is, a sequence of symmetrically and identically distributed independent random variables, taking on the values $+1$ and $-1$ with probability $1/2$), which are independent with respect to all other random variables involved. 
%{\color{red} Observe that the vectors
%$$
%\left\{\epsilon_{j_d}\Biggl(\Big|\sum_{\vec{j_d}'\in \mathbb{N}^{d-1}_{n}}x_{j_1}^1x_{j_2}^2\dots x_{j_{d-1}}^{d-1}{z}_{\vec{j}}({\omega})\Big|-\Big|\sum_{\vec{j_d}'\in \mathbb{N}^{d-1}_{n}}x_{j_1}^1x_{j_2}^2\dots x_{j_{d-1}}^{d-1}{z}_{\vec{j}}(\tilde{\omega})\Big|\Biggr)\right\}_{x^i\in\mathcal{E}},\;j_d\in[n],$$
%are symmetric and, for each $j_d=1,2,\dots,n$, have the same distribution as the vector
%$$
%\left\{\Big|\sum_{\vec{j_d}'\in \mathbb{N}^{d-1}_{n}}x_{j_1}^1x_{j_2}^2\dots x_{j_{d-1}}^{d-1}z_{\vec{j}}(\omega)\Big|-\Big|\sum_{\vec{j_d}'\in \mathbb{N}^{d-1}_{n}}x_{j_1}^1x_{j_2}^2\dots x_{j_{d-1}}^{d-1}{z}_{\vec{j}}(\tilde{\omega})\Big|\right\}_{x^i\in\mathcal{E}}.$$
%Moreover, these vectors are independent, because 
%$$
%\{\vec{j}\in \mathbb{N}^{d}_{n}:\,j_d=k\}\cap \{\vec{j}\in \mathbb{N}^{d}_{n}:\,j_d=l\}=\emptyset\;\;\mbox{if}\,1\le k\ne l\le n.$$
%Therefore, by the inequalities}
For each $l=1,2,\ldots, n$, we consider the $2^{n(d-1)}$-dimensional random vectors
$$
v_{l}:=\left\{\epsilon_{l}\Biggl(\Big|\sum_{\vec{j}\in \mathbb{N}^{d}_{n}(d,l)}x_{j_1}^1x_{j_2}^2\dots x_{j_{d-1}}^{d-1}{z}_{\vec{j}}({\omega})\Big|-\Big|\sum_{\vec{j}\in \mathbb{N}^{d}_{n}(d,l)}x_{j_1}^1x_{j_2}^2\dots x_{j_{d-1}}^{d-1}{z}_{\vec{j}}(\tilde{\omega})\Big|\Biggr)\right\}_{x^i\in\mathcal{E},i\leqslant d-1}$$
and
$$u_{l}:=\left\{\Big|\sum_{\vec{j}\in \mathbb{N}^{d}_{n}(d,l)}x_{j_1}^1x_{j_2}^2\dots x_{j_{d-1}}^{d-1}z_{\vec{j}}(\omega)\Big|-\Big|\sum_{\vec{j}\in \mathbb{N}^{d}_{n}(d,l)}x_{j_1}^1x_{j_2}^2\dots x_{j_{d-1}}^{d-1}{z}_{\vec{j}}(\tilde{\omega})\Big|\right\}_{x^i\in\mathcal{E}, i\leqslant d-1}.$$
As is easy to see, for every $l=1,2,\ldots, n$, the vectors $v_{l}$ and $u_{l}$ are symmetrically distributed and have the same distribution.
%$$
%u_{j_d}:=\left\{\Big|\sum_{\vec{j_d}'\in \mathbb{N}^{d-1}_{n}}x_{j_1}^1x_{j_2}^2\dots x_{j_{d-1}}^{d-1}z_{\vec{j}}(\omega)\Big|-\Big|\sum_{\vec{j_d}'\in \mathbb{N}^{d-1}_{n}}x_{j_1}^1x_{j_2}^2\dots x_{j_{d-1}}^{d-1}{z}_{\vec{j}}(\tilde{\omega})\Big|\right\}_{x^i\in\mathcal{E}, i\leqslant d-1}.$$
Moreover, each of the sequences $\{v_{l}\}_{l=1}^n$ and $\{u_{l}\}_{l=1}^n$ consists of independent random variables, because $z_{\vec{j}}(\omega)$ and ${z}_{\vec{j}}(\tilde{\omega})$, $\vec{j}\in \mathbb{N}^{d}_{n}$, are independent and  
$$
\mathbb{N}^{d}_{n}(d,k)\cap \mathbb{N}^{d}_{n}(d,l)=\emptyset\;\;\mbox{if}\;1\le k\ne l\le n.$$
%$$
%\{\vec{j}\in \mathbb{N}^{d}_{n}:\,j_d=k\}\cap \{\vec{j}\in \mathbb{N}^{d}_{n}:\,j_d=l\}=\emptyset\;\;\mbox{if}\,1\le k\ne l\le n.$$
%Similarly, vectors  $u_{1}, u_2,\ldots, u_n$ are independent. 
Consequently, the vectors $\sum_{l=1}^n v_{l}$ and $\sum_{l=1}^n u_{l}
$ also
%$$
%\sum_{l=1}^n v_{j_d}\quad\mbox{and}\quad \sum_{l=1}^n u_{j_d}
%$$
have the same distribution.
Hence, by using the inequalities
$$
\sup_\alpha \mathsf{E}\xi_\alpha\leqslant \mathsf{E}\sup_\alpha \xi_\alpha\quad\mbox{ and }\quad\sup_\alpha (\xi_\alpha +\eta_\alpha)\leqslant \sup_\alpha \xi_\alpha +\sup_\alpha \eta_\alpha,
$$
which hold for arbitrary families of random variables $\{\xi_\alpha\}_\alpha$ and $\{\eta_\alpha\}_\alpha$, we obtain
%где $J_1\subset \mathbb{N}^{1}_{n}$, $J_2\subset \mathbb{N}^{1}_{n}$, $J_1\cap J_2=\emptyset$, получим 
%{\color{red}
%\begin{eqnarray}
%(I) &=&\mathsf{E}_\omega \max_{x^i\in\mathcal{E}}\sum_{l=1}^n\left(\Big|\sum_{\vec{j_d}'\in \mathbb{N}^{d-1}_{n}}x_{j_1}^1x_{j_2}^2\dots x_{j_{d-1}}^{d-1}{z}_{\vec{j}}({\omega})\Big|-\mathsf{E}_{\tilde{\omega}}\Big|\sum_{\vec{j_d}'\in \mathbb{N}^{d-1}_{n}}x_{j_1}^1x_{j_2}^2\dots x_{j_{d-1}}^{d-1}{z}_{\vec{j}}(\tilde{\omega})\Big|\right)\nonumber\\&\leqslant&\mathsf{E}_{\omega,\tilde{\omega}}\max_{x^i\in\mathcal{E}}\sum_{l=1}^n \Big(\Big|\sum_{\vec{j_d}'\in \mathbb{N}^{d-1}_{n}}{z}_{\vec{j}}({\omega})x_{j_1}^1x_{j_2}^2\dots x_{j_{d-1}}^{d-1}\Big|-\Big|\sum_{\vec{j_d}'\in \mathbb{N}^{d-1}_{n}}x_{j_1}^1x_{j_2}^2\dots x_{j_{d-1}}^{d-1}{z}_{\vec{j}}(\tilde{\omega})\Big|\Big)\nonumber\\&\leqslant&
%\mathsf{E}_{\omega,\tilde{\omega},\epsilon} \max_{x^i\in\mathcal{E}}\sum_{l=1}^n \epsilon_{j_d}\left(\Big|\sum_{\vec{j_d}'\in \mathbb{N}^{d-1}_{n}}x_{j_1}^1x_{j_2}^2\dots x_{j_{d-1}}^{d-1}{z}_{\vec{j}}({\omega})\Big|-\Big|\sum_{\vec{j_d}'\in \mathbb{N}^{d-1}_{n}}x_{j_1}^1x_{j_2}^2\dots x_{j_{d-1}}^{d-1}{z}_{\vec{j}}(\tilde{\omega})\Big|\right)\nonumber\\&\leqslant& 2\cdot \mathsf{E}_{\omega,\epsilon} \max_{x^i\in\mathcal{E}}\sum_{l=1}^n \epsilon_{j_d}\Big|\sum_{\vec{j_d}'\in \mathbb{N}^{d-1}_{n}}x_{j_1}^1x_{j_2}^2\dots x_{j_{d-1}}^{d-1}{z}_{\vec{j}}({\omega})\Big|.
%\label{long ineq}
%\end{eqnarray}
%}
%{\color{blue}
\begin{eqnarray}
A_1 &=&\mathsf{E}_\omega \max_{x^i\in\mathcal{E}}\sum_{l=1}^n\biggl(\Big|\sum_{\vec{j}\in \mathbb{N}^{d}_{n}(d,l)}x_{j_1}^1x_{j_2}^2\dots x_{j_{d-1}}^{d-1}{z}_{\vec{j}}({\omega})\Big|-\mathsf{E}_{\tilde{\omega}}\Big|\sum_{\vec{j}\in \mathbb{N}^{d}_{n}(d,l)}x_{j_1}^1x_{j_2}^2\dots x_{j_{d-1}}^{d-1}{z}_{\vec{j}}(\tilde{\omega})\Big|\biggr)\nonumber\\
&\leqslant&\mathsf{E}_{\omega,\tilde{\omega}}\max_{x^i\in\mathcal{E}}\sum_{l=1}^n \Big(\Big|\sum_{\vec{j}\in \mathbb{N}^{d}_{n}(d,l)}x_{j_1}^1x_{j_2}^2\dots x_{j_{d-1}}^{d-1}{z}_{\vec{j}}({\omega})\Big|-\Big|\sum_{\vec{j}\in \mathbb{N}^{d}_{n}(d,l)}x_{j_1}^1x_{j_2}^2\dots x_{j_{d-1}}^{d-1}{z}_{\vec{j}}(\tilde{\omega})\Big|\Big)\nonumber\\
&=&
\mathsf{E}_{\omega,\tilde{\omega},\epsilon} \max_{x^i\in\mathcal{E}}\sum_{l=1}^n \epsilon_{l}\biggl(\Big|\sum_{\vec{j}\in \mathbb{N}^{d}_{n}(d,l)}x_{j_1}^1x_{j_2}^2\dots x_{j_{d-1}}^{d-1}{z}_{\vec{j}}({\omega})\Big|-\Big|\sum_{\vec{j}\in \mathbb{N}^{d}_{n}(d,l)}x_{j_1}^1x_{j_2}^2\dots x_{j_{d-1}}^{d-1}{z}_{\vec{j}}(\tilde{\omega})\Big|\biggr)\nonumber
\end{eqnarray}
\begin{equation}
\leqslant 2\cdot \mathsf{E}_{\omega,\epsilon} \max_{x^i\in\mathcal{E}}\sum_{l=1}^n \epsilon_{l}\Big|\sum_{\vec{j}\in \mathbb{N}^{d}_{n}(d,l)}x_{j_1}^1x_{j_2}^2\dots x_{j_{d-1}}^{d-1}{z}_{\vec{j}}({\omega})\Big|.
\label{long ineq}
\end{equation}
%}

Next, since the function $\phi(u)=|u|$ is 1-Lipschitz on $\mathbb{R}$ and $\phi(0)=0$, by Talagrand's contraction inequality (see \cite[Formula (4.20) in the proof of Theorem 4.12]{LedTal} or \cite[Lemma 2.8]{APSS}), for a fixed $\omega\in[0,1]$ it follows that
$$
\mathsf{E}_{\epsilon} \max_{x^i\in\mathcal{E}}\sum_{l=1}^n \epsilon_{l}\Big|\sum_{\vec{j}\in \mathbb{N}^{d}_{n}(d,l)}x_{j_1}^1x_{j_2}^2\dots x_{j_{d-1}}^{d-1}{z}_{\vec{j}}({\omega})\Big|
\leqslant \mathsf{E}_{\epsilon} \max_{x^i\in\mathcal{E}}\sum_{l=1}^n \epsilon_{l}\sum_{\vec{j}\in \mathbb{N}^{d}_{n}(d,l)}x_{j_1}^1x_{j_2}^2\dots x_{j_{d-1}}^{d-1}{z}_{\vec{j}}({\omega}).$$
Taking then the expectation over $\omega\in [0,1]$ and applying Fubini theorem, we get
$$
\mathsf{E}_{\omega,\epsilon} \max_{x^i\in\mathcal{E}}\sum_{l=1}^n \epsilon_{l}\Big|\sum_{\vec{j}\in \mathbb{N}^{d}_{n}(d,l)}x_{j_1}^1x_{j_2}^2\dots x_{j_{d-1}}^{d-1}{z}_{\vec{j}}({\omega})\Big|
\leqslant \mathsf{E}_\epsilon\mathsf{E}_\omega \max_{x^i\in\mathcal{E}}\sum_{l=1}^n\sum_{\vec{j}\in \mathbb{N}^{d}_{n}(d,l)}\epsilon_{l}x_{j_1}^1x_{j_2}^2\dots x_{j_{d-1}}^{d-1}{z}_{\vec{j}}({\omega}).
$$
Observe now that the function $\psi_{l}(u)=\epsilon_{l} u$, $u\in \mathbb{R}$, is 1-Lipschitz for each $l=1,2,\dots,n$.
%a fixed collection of signs $\{\epsilon_{j_d}\}$, the functions $f_{\vec{j}}(t_{\vec{j}})=\epsilon_{j_d} t_{\vec{j}}$, $\vec{j}\in \mathbb{N}^{d}_{n}$ are 1-Lipschitz. 
Therefore, applying Talagrand's inequality once more (this time, to the expectation over $\omega$), we obtain
% and 1-Lipschitz functions $f_{\vec{j}}(t_{\vec{j}})=\epsilon_{j_d} t_{\vec{j}}$, $\vec{j}\in \mathbb{N}^{d}_{n}$. Then, 
$$
\mathsf{E}_\omega \max_{x^i\in\mathcal{E}}\sum_{\vec{j}\in \mathbb{N}^{d}_{n}(d,l)}\epsilon_{l}x_{j_1}^1x_{j_2}^2\dots x_{j_{d-1}}^{d-1}{z}_{\vec{j}}({\omega})\leqslant\mathsf{E}_\omega \max_{x^i\in\mathcal{E}}\sum_{\vec{j}\in \mathbb{N}^{d}_{n}(d,l)}x_{j_1}^1x_{j_2}^2\dots x_{j_{d-1}}^{d-1}{z}_{\vec{j}}({\omega}),
$$
and then from the preceding estimate it follows
\begin{eqnarray*}
\mathsf{E}_{\omega,\epsilon} \max_{x^i\in\mathcal{E}}\sum_{l=1}^n \epsilon_{l}\Big|\sum_{\vec{j}\in \mathbb{N}^{d}_{n}(d,l)}x_{j_1}^1x_{j_2}^2\dots x_{j_{d-1}}^{d-1}{z}_{\vec{j}}({\omega})\Big|
&\leqslant&
\mathsf{E}_\omega \max_{x^i\in\mathcal{E}}\sum_{l=1}^n\sum_{\vec{j}\in \mathbb{N}^{d}_{n}(d,l)}x_{j_1}^1x_{j_2}^2\dots x_{j_{d-1}}^{d-1}{z}_{\vec{j}}({\omega})\\
&=&
\mathsf{E}_\omega \max_{x^i\in\mathcal{E}}\sum_{\vec{j}\in \mathbb{N}^{d}_{n}}x_{j_1}^1x_{j_2}^2\dots x_{j_{d-1}}^{d-1}{z}_{\vec{j}}({\omega}).
\end{eqnarray*}

%\mathsf{E}_\omega \max_{x^i\in\mathcal{E},i\leqslant d-1}\sum_{\vec{j}\in \mathbb{N}^{d}_{n}}r_{\vec{j}} \epsilon_{j_d} a_{\vec{j}} x_{j_1}^1x_{j_2}^2\dots x_{j_{d-1}}^{d-1}
%$$
%$$
%\leqslant\mathsf{E}_\omega \max_{x^i\in\mathcal{E},i\leqslant d-1}\sum_{\vec{j}\in \mathbb{N}^{d}_{n}}r_{\vec{j}}  a_{\vec{j}}x_{j_1}^1x_{j_2}^2\dots x_{j_{d-1}}^{d-1}.$$
Thus, in view of \eqref{long ineq}, we conclude that
\begin{equation}
A_1\leqslant 2\cdot \mathsf{E}_\omega \max_{x^i\in\mathcal{E}}\sum_{\vec{j}\in \mathbb{N}^{d}_{n}}x_{j_1}^1x_{j_2}^2\dots x_{j_{d-1}}^{d-1}{z}_{\vec{j}}({\omega}).
\label{final second est}
\end{equation}
As a result, from \eqref{general est}, \eqref{general-1 est}  and \eqref{final second est} it follows 
\begin{eqnarray}
\mathsf{E}_\omega\|S_{n}(\cdot,\omega)\|_{L_\infty([0,1]^d)}&\leqslant& \sum_{l=1}^n\Big(\sum_{\vec{j}\in \mathbb{N}^{d}_{n}(d,l)}a_{\vec{j}}^2\Big)^{1/2}\nonumber\\
&+& 2\cdot \mathsf{E}_\omega \max_{x^i\in\mathcal{E}}\sum_{\vec{j}\in \mathbb{N}^{d}_{n}}x_{j_1}^1x_{j_2}^2\dots x_{j_{d-1}}^{d-1}{z}_{\vec{j}}({\omega}).
\label{final first step}
\end{eqnarray}

%$$
%\mathsf{E}_\omega \max_{x^i\in\mathcal{E},i\leqslant d-1}\sum_{l=1}^n\left(\Big|\sum_{\vec{j_d}'\in \mathbb{N}^{d-1}_{n}}z_{\vec{j}}x_{j_1}^1x_{j_2}^2\dots x_{j_{d-1}}^{d-1}\Big|-\mathsf{E}_\omega\Big|\sum_{\vec{j_d}'\in \mathbb{N}^{d-1}_{n}}z_{\vec{j}}x_{j_1}^1x_{j_2}^2\dots x_{j_{d-1}}^{d-1}\Big|\right)$$
%$$
%\leqslant 2\cdot \mathsf{E}_\omega \max_{x^i\in\mathcal{E},i\leqslant d-1}\sum_{\vec{j}\in \mathbb{N}^{d}_{n}}z_{\vec{j}} x_{j_1}^1x_{j_2}^2\dots x_{j_{d-1}}^{d-1}.$$\
Next, comparing relations \eqref{final second est} and
%$\mathsf{E}_\omega\|S_{a,n}(\vec{t},\omega)\|_{L_\infty([0,1]^d)}$
\eqref{EQ1}, we see that the same reasoning (with replacement of the vector $x^{d}$ with $x^{d-1}$) can be applied also to estimate the right-hand side of inequality  \eqref{final second est}. Then, precisely in the same way as before, we deduce that
\begin{eqnarray*}
\mathsf{E}_\omega \max_{x^i\in\mathcal{E}}\sum_{\vec{j}\in \mathbb{N}^{d}_{n}} x_{j_1}^1x_{j_2}^2\dots x_{j_{d-1}}^{d-1}{z}_{\vec{j}}({\omega})
&\leqslant&
\sum_{l=1}^n\Big(\sum_{\vec{j}\in \mathbb{N}^{d}_{n}(d-1,l)}a_{\vec{j}}^2\Big)^{1/2}\\
&+&2\cdot \mathsf{E}_\omega \max_{x^i\in\mathcal{E}}\sum_{\vec{j}\in \mathbb{N}^{d}_{n}}x_{j_1}^1x_{j_2}^2\dots x_{j_{d-2}}^{d-2}{z}_{\vec{j}}({\omega}).
\end{eqnarray*}
By combining this together with \eqref{final first step}, we obtain that
\begin{eqnarray*}
\mathsf{E}_\omega\|S_{n}(\cdot,\omega)\|_{L_\infty([0,1]^d)}&\leqslant& \sum_{l=1}^n\Big(\sum_{\vec{j}\in \mathbb{N}^{d}_{n}(d,l)}a_{\vec{j}}^2\Big)^{1/2}+ 2\cdot\sum_{l=1}^n\Big(\sum_{\vec{j}\in \mathbb{N}^{d}_{n}(d-1,l)}a_{\vec{j}}^2\Big)^{1/2}\\
&+& 2^2\cdot \mathsf{E}_\omega \max_{x^i\in\mathcal{E}}\sum_{\vec{j}\in \mathbb{N}^{d}_{n}}x_{j_1}^1x_{j_2}^2\dots x_{j_{d-2}}^{d-2}{z}_{\vec{j}}({\omega}).
\end{eqnarray*}
Proceeding in the same way, we get
\begin{eqnarray*}
\mathsf{E}_\omega\|S_{n}(\cdot,\omega)\|_{L_\infty([0,1]^d)}&\leqslant& \sum_{l=1}^n\Big(\sum_{\vec{j}\in \mathbb{N}^{d}_{n}(d,l)}a_{\vec{j}}^2\Big)^{1/2}+\dots + 2^{d-2}\cdot\sum_{l=1}^n\Big(\sum_{\vec{j}\in \mathbb{N}^{d}_{n}(2,l)}a_{\vec{j}}^2\Big)^{1/2}\\
&+& 2^{d-1}\cdot \mathsf{E}_\omega \max_{x^1\in\mathcal{E}}\sum_{\vec{j}\in \mathbb{N}^{d}_{n}}x_{j_1}^1{z}_{\vec{j}}({\omega}).
\end{eqnarray*}
Since
\begin{eqnarray*}
\mathsf{E}_\omega\max_{x^1\in\mathcal{E}}\sum_{\vec{j}\in \mathbb{N}^{d}_{n}} x_{j_1}^1{z}_{\vec{j}}({\omega}) =\mathsf{E}_\omega
\sum_{l=1}^n\Big|\sum_{\vec{j}\in \mathbb{N}^{d}_{n}(1,l)}a_{\vec{j}}r_{\vec{j}}(\omega)\Big|\leqslant
\sum_{l=1}^n\Big(\sum_{\vec{j}\in \mathbb{N}^{d}_{n}(1,l)}a_{\vec{j}}^2\Big)^{1/2},
\end{eqnarray*}
we arrive at estimate \eqref{upper est}, which completes the proof of the theorem.
%$$
%\mathsf{E}_\omega\|S_{a,n}(\vec{t},\omega)\|_{L_\infty([0,1]^d)}\leqslant \sum_{j_{d}=1}^n\Big(\sum_{\vec{j_d}'\in \mathbb{N}^{d-1}_{n}}a_{\vec{j}}^2\Big)^{1/2}+\dots + 2^{d-1}\cdot\sum_{j_{1}=1}^n\Big(\sum_{\vec{j}_1'\in \mathbb{N}^{d-1}_{n}}a_{\vec{j}}^2\Big)^{1/2},$$
%и оценка \eqref{upper est} доказана.
\end{proof}

Applying Theorem \ref{theor_mult_Rad}, we immediately obtain the following result. 

\begin{cor}
\label{Theorema_RUC_mult_Rad multiple}
The multiple Rademacher system of an arbitrary order $d$ possesses the RUC property  in the space $L_\infty([0,1]^d)$, that is, there is a constant $C_d>0$ such that for all $n\in\mathbb{N}$ and $a_{\vec{j}}\in\mathbb{R}$, $\vec{j}\in\mathbb{N}_n^d$, we have 
\begin{equation*}
\mathsf{E}_\theta\Big\|\sum_{\vec{j}\in \mathbb{N}^{d}_{n}}\theta_{\vec{j}}a_{\vec{j}}{\rm r}_{\vec{j}}^{\otimes}\Big\|_{L_\infty([0,1]^d)}\leqslant C_d\Big\|\sum_{\vec{j}\in \mathbb{N}^{d}_{n}}a_{\vec{j}}{\rm r}_{\vec{j}}^{\otimes}\Big\|_{L_\infty([0,1]^d)}.
\end{equation*}
%with some universal constant $C>0$ and all $n\in\mathbb{N}$ and $a_{\vec{j}}\in\mathbb{R}$, $\vec{j}\in\mathbb{N}_n^d$.
\end{cor}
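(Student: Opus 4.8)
The plan is to read off Corollary \ref{Theorema_RUC_mult_Rad multiple} directly from the two estimates already contained in Theorem \ref{theor_mult_Rad}. The key observation is that the lower bound \eqref{lower est} for the $L_\infty$-norm of the unsigned Rademacher sum and the upper bound \eqref{upper est} for the averaged norm of the signed sum are both expressed through one and the same family of quantities
$$
x_k:=\sum_{l=1}^n\Big(\sum_{\vec{j}\in \mathbb{N}^{d}_{n}(k,l)}a_{\vec{j}}^2\Big)^{1/2},\qquad k=1,2,\dots,d,
$$
so it suffices to compare the two inequalities termwise.

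First I would bound the right-hand side of \eqref{upper est}. Since the coefficient of the $k$-th summand there is $2^{d-k}$, we get
$$
\mathsf{E}_{\theta}\Big\|\sum_{\vec{j}\in \mathbb{N}^{d}_{n}}\theta_{\vec{j}}a_{\vec{j}}{\rm r}_{\vec{j}}^{\otimes}\Big\|_{L_\infty([0,1]^d)}\le \sum_{k=1}^d 2^{d-k}x_k\le\Big(\sum_{k=1}^d 2^{d-k}\Big)\max_{1\le k\le d}x_k=(2^d-1)\max_{1\le k\le d}x_k.
$$
Next, inequality \eqref{lower est} reads exactly $\max_{1\le k\le d}x_k\le c_d^{-1}\big\|\sum_{\vec{j}\in \mathbb{N}^{d}_{n}}a_{\vec{j}}{\rm r}_{\vec{j}}^{\otimes}\big\|_{L_\infty([0,1]^d)}$. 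Chaining the last two displays yields the required bound with the constant $C_d:=(2^d-1)/c_d$, which depends only on $d$; together with the fact (recalled right after \eqref{Bonami}) that the multiple Rademacher system, suitably ordered, is basic in every rearrangement invariant space on $[0,1]$, this is precisely the RUC property.

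I do not expect any genuine obstacle at this stage: the corollary is a formal consequence of Theorem \ref{theor_mult_Rad}, all the analytic effort — the symmetrization trick and the iterated use of Talagrand's contraction principle — having already been spent in proving that theorem. The only minor point worth noting is that the sum in the statement runs over the whole index set $\mathbb{N}^d_n$ (tuples with possible repeated coordinates), which is exactly the range handled by Theorem \ref{theor_mult_Rad}, so no separate treatment of "diagonal" terms is needed here; that subtlety becomes relevant only when one passes from the multiple Rademacher system to the Rademacher chaos via the decoupling results of Subsection \ref{decoupling}.
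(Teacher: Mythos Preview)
Your proposal is correct and follows essentially the same approach as the paper, which simply states that the corollary is obtained by ``applying Theorem \ref{theor_mult_Rad}''; you have merely made the chaining of \eqref{upper est} and \eqref{lower est} explicit, with the correct constant $C_d=(2^d-1)/c_d$.
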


Furthermore, in view of inequalities \eqref{eq_rad_cut_norm_mult} and definition \eqref{eq_rad_cut_norm_mult def} of the multidimensional cut-norm, the following assertion holds:

\begin{cor}
\label{cor_cut_norm multiple}
For every $d\in\mathbb{N}$, $n_1,n_2,\dots,n_d\in\mathbb{N}$ and all arrays $(a_{\vec{j}})_{j_k\leqslant n_k}$, with constants depending only on $d$, we have
$$
\mathsf{E}_{\theta}{\|(\theta_{\vec{j}}a_{\vec{j}})\|}_{cut}\asymp \min_{\theta_{\vec{j}}=\pm 1}{\|(\theta_{\vec{j}}a_{\vec{j}})\|}_{cut}\asymp \max_{1\le k\le d}\sum_{j_{k}=1}^{n_k} \Big(\sum_{j_1=1}^{n_1}\dots \sum_{j_{k-1}=1}^{n_{k-1}} \sum_{j_{k+1}=1}^{n_{k+1}}\dots \sum_{j_{d}=1}^{n_{d}}a_{{\vec{j}}}^2\Big)^{1/2}.
$$
\end{cor}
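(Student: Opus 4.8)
The plan is to transfer everything, via the two-sided bound \eqref{eq_rad_cut_norm_mult}, to $L_\infty$-norms of multiple Rademacher sums and then to invoke the estimates \eqref{lower est} and \eqref{upper est} of Theorem \ref{theor_mult_Rad}.

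First I would reduce to the case $n_1=n_2=\dots=n_d$. Setting $n:=\max_{1\le k\le d}n_k$ and extending the array by $a_{\vec j}:=0$ whenever $j_m>n_m$ for some $m$, one checks that neither $\|(\theta_{\vec j}a_{\vec j})\|_{cut}$ (for any signs $\theta_{\vec j}=\pm1$, since deleting zero slices does not change the cut-norm) nor the quantity $\max_{1\le k\le d}\sum_{j_k}\big(\sum_{j_1}\dots\sum_{j_{k-1}}\sum_{j_{k+1}}\dots\sum_{j_d}a_{\vec j}^2\big)^{1/2}$ is affected: for $1\le l\le n_k$ the inner sum over $\vec j\in\mathbb N_n^d(k,l)$ coincides with $\sum_{j_1\le n_1}\dots\sum_{j_{k-1}\le n_{k-1}}\sum_{j_{k+1}\le n_{k+1}}\dots\sum_{j_d\le n_d}a_{\vec j}^2$, while for $l>n_k$ it is zero. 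From now on all indices run in $[n]$, and I write $S_k:=\sum_{l=1}^n\big(\sum_{\vec j\in\mathbb N_n^d(k,l)}a_{\vec j}^2\big)^{1/2}$, so that the right-hand side of the asserted equivalence equals $\max_{1\le k\le d}S_k$.

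Applying \eqref{eq_rad_cut_norm_mult} to the array $(\theta_{\vec j}a_{\vec j})$ gives, with constants $1$ and $2^d$, that $\|(\theta_{\vec j}a_{\vec j})\|_{cut}\asymp\big\|\sum_{\vec j\in\mathbb N_n^d}\theta_{\vec j}a_{\vec j}{\rm r}_{\vec j}^{\otimes}\big\|_{L_\infty([0,1]^d)}$; averaging over $\theta$, respectively taking the minimum over $\theta$, therefore reduces the first two quantities in the corollary to $\mathsf E_\theta\big\|\sum_{\vec j}\theta_{\vec j}a_{\vec j}{\rm r}_{\vec j}^{\otimes}\big\|_{L_\infty}$ and $\min_{\theta}\big\|\sum_{\vec j}\theta_{\vec j}a_{\vec j}{\rm r}_{\vec j}^{\otimes}\big\|_{L_\infty}$. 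By \eqref{upper est} the former is at most $\sum_{k=1}^d 2^{d-k}S_k\le(2^d-1)\max_{1\le k\le d}S_k$. For the lower bound, apply \eqref{lower est} to the array $(\theta_{\vec j}a_{\vec j})$: since $(\theta_{\vec j}a_{\vec j})^2=a_{\vec j}^2$, its right-hand side is exactly $c_d\max_{1\le k\le d}S_k$, uniformly in the signs, whence $\min_{\theta}\big\|\sum_{\vec j}\theta_{\vec j}a_{\vec j}{\rm r}_{\vec j}^{\otimes}\big\|_{L_\infty}\ge c_d\max_{1\le k\le d}S_k$. Combining these with the trivial inequality $\min_{\theta}\le\mathsf E_\theta$ squeezes all three expressions between constant multiples of $\max_{1\le k\le d}S_k$, which is the claim (for $d=1$ the statement is elementary and follows in the same way from the one-dimensional case).

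Given Theorem \ref{theor_mult_Rad}, the argument is pure bookkeeping; the only points needing a little attention are the padding reduction to a common dimension $n$ and the observation that the $\ell_2$-type quantities $S_k$ on the right of \eqref{lower est}--\eqref{upper est} do not change under $a_{\vec j}\mapsto\theta_{\vec j}a_{\vec j}$ — it is precisely this sign-invariance that lets the averaged upper bound \eqref{upper est} and the pointwise lower bound \eqref{lower est} pin down $\mathsf E_\theta\|\cdot\|_{cut}$ and $\min_\theta\|\cdot\|_{cut}$ simultaneously.
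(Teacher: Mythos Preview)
Your argument is correct and follows exactly the route the paper indicates: transfer the cut-norm to the $L_\infty$-norm of the multiple Rademacher sum via \eqref{eq_rad_cut_norm_mult} and then invoke the two-sided estimates \eqref{lower est}--\eqref{upper est} of Theorem \ref{theor_mult_Rad}, exploiting the sign-invariance of the $\ell_2$-type quantities $S_k$. The padding reduction to a common $n$ is a detail the paper glosses over (Theorem \ref{theor_mult_Rad} is only stated for $\mathbb{N}_n^d$), so your write-up is in fact more complete than the paper's one-line justification.
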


Corollary \ref{cor_cut_norm multiple} implies that the standard unit arrays
% $e^{k,l}=(e^{k,l}_{i,j})_{1\le i,j<\infty}$, $1\le k,l<\infty$, such that $e^{k,l}_{k,l}=1$   and $e^{k,l}_{i,j}=0$ if either $i\ne k$ or $j\ne l$, 
form a RUC sequence in the Banach space ${\mathcal M}_{cut}^d$ consisting of all infinite-dimensional arrays $A=(a_{\vec{j}})_{\vec{j}\in \mathbb{N}^d}$ such that
$$
\|A\|_{{\mathcal M}_{cut}^d}:=\sup_{n=1,2,\dots}{\|(a_{\vec{j}})_{j_k\le n}\|}_{cut}<\infty.$$

Now, precisely in the same way as in the case of the second-order Rademacher chaos  in Section \ref{Sec_chaos_Rad}, by using Theorem \ref{theor_mult_Rad} and decoupling  tools (see Corollary \ref{real theor_decoupling}), we can obtain the following results, where $\Delta^{d}_{n}:=\{ \vec{j}\in\Delta^d:\,j_d\leqslant n\}$.

\begin{cor}
\label{Theorema_RUC_mult_Rad chaos}
The Rademacher chaos of an arbitrary order $d$ has the RUC property  in the space $L_\infty$, that is, there is a constant $C_d'>0$ such that for all $n\in\mathbb{N}$ and $a_{\vec{j}}\in\mathbb{R}$, $\vec{j}\in\Delta_n^d$,   
\begin{equation*}
\mathsf{E}_{\theta}\Big\|\sum_{\vec{j}\in \Delta^{d}_{n}}\theta_{\vec{j}}a_{\vec{j}}{\rm r}_{\vec{j}}\Big\|_{L_\infty}\leqslant C_d'\Big\|\sum_{\vec{j}\in \Delta^{d}_{n}}a_{\vec{j}}{\rm r}_{\vec{j}}\Big\|_{L_\infty}
\end{equation*}
%with some universal constant $C>0$ and all $n\in\mathbb{N}$ and $a_{\vec{j}}\in\mathbb{R}$, $\vec{j}\in\Delta_n^d$.
\end{cor}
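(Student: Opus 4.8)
The plan is to carry out, for the Rademacher chaos $\{{\rm r}_{\vec j}\}_{\vec j\in\Delta^d}$ of order $d$, exactly the two-sided argument used for the second-order chaos in Theorem~\ref{theor_chaos}, with Theorem~\ref{theor_mult_Rad} now playing the role played there by Theorem~\ref{theor_bi_Rad}. Fix $n\in\mathbb{N}$ and coefficients $a_{\vec j}\in\mathbb{R}$, $\vec j\in\Delta^d_n$. As in the passage leading to \eqref{eq_square_cut_norm_mult}, I would first symmetrize: for $\vec j=(j_1,\dots,j_d)\in\mathbb{N}^d_n$ with pairwise distinct entries set $d_{\vec j}:=\frac1{d!}\,a_{\vec j^{\uparrow}}$, where $\vec j^{\uparrow}\in\Delta^d_n$ is the increasing rearrangement of $\vec j$, and $d_{\vec j}:=0$ otherwise. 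Then the array $D=(d_{\vec j})_{\vec j\in\mathbb{N}^d_n}$ satisfies the symmetry hypothesis of Corollary~\ref{real theor_decoupling}, and, grouping the $d!$ orderings of each index set, $\sum_{\vec j\in\mathbb{N}^d_n}d_{\vec j}\,r_{j_1}\cdots r_{j_d}=\sum_{\vec j\in\Delta^d_n}a_{\vec j}\,{\rm r}_{\vec j}$ pointwise on $[0,1]$; the same identity holds with $\theta_{\vec j}a_{\vec j}$ in place of $a_{\vec j}$ for arbitrary signs, since the sign attached to $a_{\vec j^{\uparrow}}$ does not depend on the ordering of $\vec j$.

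For the lower bound I would apply Corollary~\ref{real theor_decoupling} to $\{r_i\}_{i=1}^n$ and its $d$ independent copies $\{r_i(t_k)\}_{i=1}^n$, which decouples the chaos sum into the multiple Rademacher sum built from $D$ and gives $\big\|\sum_{\vec j\in\mathbb{N}^d_n}d_{\vec j}{\rm r}_{\vec j}^{\otimes}\big\|_{L_\infty([0,1]^d)}\le C_d(2)\,\big\|\sum_{\vec j\in\Delta^d_n}a_{\vec j}{\rm r}_{\vec j}\big\|_{L_\infty}$; then estimate \eqref{lower est} of Theorem~\ref{theor_mult_Rad} bounds the left side from below by $c_d\max_{k\in[d]}\sum_{l=1}^n\big(\sum_{\vec j\in\mathbb{N}^d_n(k,l)}d_{\vec j}^2\big)^{1/2}$. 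For the upper bound I would use the elementary fact that ${\rm r}_{\vec j}(t)={\rm r}_{\vec j}^{\otimes}(t,\dots,t)$ for $\vec j\in\Delta^d_n$, so that for \emph{every} arrangement of signs $\big\|\sum_{\vec j\in\Delta^d_n}\theta_{\vec j}a_{\vec j}{\rm r}_{\vec j}\big\|_{L_\infty}\le\big\|\sum_{\vec j\in\Delta^d_n}\theta_{\vec j}a_{\vec j}{\rm r}_{\vec j}^{\otimes}\big\|_{L_\infty([0,1]^d)}$; taking $\mathsf{E}_\theta$ and invoking \eqref{upper est} of Theorem~\ref{theor_mult_Rad} (with the coefficients extended by $0$ to all of $\mathbb{N}^d_n$, which changes nothing, as the vanishing ones are unaffected by signs) bounds $\mathsf{E}_\theta\big\|\sum_{\vec j\in\Delta^d_n}\theta_{\vec j}a_{\vec j}{\rm r}_{\vec j}\big\|_{L_\infty}$ by $\sum_{k=1}^d 2^{d-k}\sum_{l=1}^n\big(\sum_{\vec j\in\Delta^d_n:\,j_k=l}a_{\vec j}^2\big)^{1/2}$.

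The final, and only genuinely delicate, step is to compare these two square-function expressions uniformly in $n$. A short count shows that each slice of $D$ satisfies $\sum_{\vec j\in\mathbb{N}^d_n(k,l)}d_{\vec j}^2=\frac1{d\cdot d!}\sum_{\vec m\in\Delta^d_n:\,l\in\vec m}a_{\vec m}^2$ --- in particular independent of $k$ --- and this dominates $\frac1{d\cdot d!}\sum_{\vec m\in\Delta^d_n:\,m_k=l}a_{\vec m}^2$ for every $k$; summing over $k$ and using $\sum_{k=1}^d 2^{d-k}\le 2^d$ then shows that the upper-bound expression is at most $2^d\sqrt{d\cdot d!}\cdot\max_{k\in[d]}\sum_{l=1}^n\big(\sum_{\vec j\in\mathbb{N}^d_n(k,l)}d_{\vec j}^2\big)^{1/2}$. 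Chaining the upper bound, this comparison, inequality \eqref{lower est}, and the decoupling estimate then yields $\mathsf{E}_\theta\big\|\sum_{\vec j\in\Delta^d_n}\theta_{\vec j}a_{\vec j}{\rm r}_{\vec j}\big\|_{L_\infty}\le C_d'\,\big\|\sum_{\vec j\in\Delta^d_n}a_{\vec j}{\rm r}_{\vec j}\big\|_{L_\infty}$ with, say, $C_d'=2^d\sqrt{d\cdot d!}\,C_d(2)/c_d$, which is the claim. The main obstacle I anticipate is exactly this passage between the indexing of the symmetrized array $D$ over the sets $\mathbb{N}^d_n(k,l)$ and the indexing of the original chaos coefficients, supported on $\Delta^d_n$ --- it must be done so that all constants depend on $d$ alone; everything else is a verbatim transcription of the $d=2$ argument of Section~\ref{Sec_chaos_Rad}.
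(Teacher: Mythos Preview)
Your proposal is correct and follows exactly the route the paper indicates: it transcribes the argument of Theorem~\ref{theor_chaos} to order $d$, using the lower bound \eqref{lower est} of Theorem~\ref{theor_mult_Rad} together with decoupling (Corollary~\ref{real theor_decoupling}) on the symmetrized array, and the upper bound \eqref{upper est} via the diagonal restriction ${\rm r}_{\vec j}(t)={\rm r}_{\vec j}^{\otimes}(t,\dots,t)$. The slice computation $\sum_{\vec j\in\mathbb{N}^d_n(k,l)}d_{\vec j}^2=\frac{1}{d\cdot d!}\sum_{\vec m\in\Delta^d_n:\,l\in\vec m}a_{\vec m}^2$ and the ensuing comparison of the two square-function expressions are correct, so the chaining yields the stated constant $C_d'$; the paper itself supplies no details beyond the sentence ``precisely in the same way as in the case of the second-order Rademacher chaos''.
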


\begin{cor}
\label{cor_cut_norm chaos}
For every $d,n\in\mathbb{N}$, $d\le n$, and all strictly upper triangular arrays $(a_{\vec{j}})_{\vec{j}\in\Delta_n^d}$, with constants depending only on $d$, we have
$$
\mathsf{E}_{\theta}{\|(\theta_{\vec{j}}a_{\vec{j}})\|}_{cut}^*\asymp \min_{\theta_{\vec{j}}=\pm 1}{\|(\theta_{\vec{j}}a_{\vec{j}})\|}_{cut}^*
\asymp  \max_{1\le k\le d}\sum_{l=1}^{n} \Big(\sum_{\vec{j}\in\Delta^d_n\cap\mathbb{N}_n^d(k,l)}a_{{\vec{j}}}^2\Big)^{1/2},
$$
%{\color{blue}
%$$
%\mathsf{E}_{\theta}{\|(\theta_{\vec{j}}a_{\vec{j}})\|}_{\square}\asymp \min_{\theta_{\vec{j}}=\pm 1}{\|(\theta_{\vec{j}}a_{\vec{j}})\|}_{\square}\asymp  \max_{1\le k\le d}\sum_{j_{k}=1}^{n} \Big(\sum_{\vec{j_k}'\in\mathbb{N}^d_n:\,\vec{j}\in\Delta^d_n}a_{{\vec{j}}}^2\Big)^{1/2},
%$$
%}
\end{cor}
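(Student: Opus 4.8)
The plan is to follow the route used for the second-order chaos in Section~\ref{Sec_chaos_Rad}: first reduce the assertion about the modified cut-norm to a statement about $L_\infty$-norms of Rademacher chaos sums, and then establish the latter by combining decoupling with the two-sided estimates \eqref{lower est}--\eqref{upper est} of Theorem~\ref{theor_mult_Rad}. Concretely, equivalence \eqref{eq_square_cut_norm_mult}, applied to the array $(\theta_{\vec{j}}a_{\vec{j}})_{\vec{j}\in\Delta_n^d}$, gives for \emph{each} sign pattern $\theta$ that
$$
\|(\theta_{\vec{j}}a_{\vec{j}})\|_{cut}^*\asymp\Big\|\sum_{\vec{j}\in\Delta_n^d}\theta_{\vec{j}}a_{\vec{j}}\mathbf{r}_{\vec{j}}\Big\|_{L_\infty},
$$
with constants depending only on $d$. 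Taking $\mathsf{E}_\theta$ and then $\min_\theta$ on both sides, it suffices to prove that, writing $M:=\max_{1\le k\le d}\sum_{l=1}^n\big(\sum_{\vec{j}\in\Delta_n^d\cap\mathbb{N}_n^d(k,l)}a_{\vec{j}}^2\big)^{1/2}$, one has $\mathsf{E}_\theta\|\sum_{\vec{j}\in\Delta_n^d}\theta_{\vec{j}}a_{\vec{j}}\mathbf{r}_{\vec{j}}\|_{L_\infty}\asymp_d\min_\theta\|\sum_{\vec{j}\in\Delta_n^d}\theta_{\vec{j}}a_{\vec{j}}\mathbf{r}_{\vec{j}}\|_{L_\infty}\asymp_d M$. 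Since $\min_\theta\le\mathsf{E}_\theta$ is automatic, this splits into an upper bound $\mathsf{E}_\theta(\cdots)\lesssim_d M$ and a lower bound $\|\sum_{\vec{j}\in\Delta_n^d}\theta_{\vec{j}}a_{\vec{j}}\mathbf{r}_{\vec{j}}\|_{L_\infty}\gtrsim_d M$ valid for \emph{every individual} $\theta$.

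For the upper bound I would restrict the associated multiple Rademacher sum to the diagonal: since $\mathbf{r}_{\vec{j}}(t)=\mathbf{r}_{\vec{j}}^{\otimes}(t,\dots,t)$, we get $\|\sum_{\vec{j}\in\Delta_n^d}\theta_{\vec{j}}a_{\vec{j}}\mathbf{r}_{\vec{j}}\|_{L_\infty}\le\|\sum_{\vec{j}\in\Delta_n^d}\theta_{\vec{j}}a_{\vec{j}}\mathbf{r}_{\vec{j}}^{\otimes}\|_{L_\infty([0,1]^d)}$, exactly as in the proof of Theorem~\ref{theor_chaos}. Extending $(a_{\vec{j}})$ by zero to an array on all of $\mathbb{N}_n^d$ and applying \eqref{upper est} of Theorem~\ref{theor_mult_Rad}, and observing that for such a zero-padded array $\sum_{\vec{j}\in\mathbb{N}_n^d(k,l)}a_{\vec{j}}^2=\sum_{\vec{j}\in\Delta_n^d\cap\mathbb{N}_n^d(k,l)}a_{\vec{j}}^2$, we obtain directly
$$
\mathsf{E}_\theta\Big\|\sum_{\vec{j}\in\Delta_n^d}\theta_{\vec{j}}a_{\vec{j}}\mathbf{r}_{\vec{j}}\Big\|_{L_\infty}\le\sum_{k=1}^d 2^{d-k}\sum_{l=1}^n\Big(\sum_{\vec{j}\in\Delta_n^d\cap\mathbb{N}_n^d(k,l)}a_{\vec{j}}^2\Big)^{1/2}\le(2^d-1)\,M.
$$
This half is short and uses no decoupling.

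For the lower bound, fix $\theta$ and symmetrize as in the derivation of \eqref{eq_square_cut_norm_mult}: put $d_{\vec{j}}^{(\theta)}:=\frac{1}{d!}\theta_{\vec{i}}a_{\vec{i}}$ for $\vec{j}\in\mathbb{N}_n^d$ with pairwise distinct coordinates, $\vec{i}\in\Delta_n^d$ being the increasing rearrangement of $\vec{j}$, and $d_{\vec{j}}^{(\theta)}:=0$ otherwise. These coefficients satisfy the symmetry hypothesis of Corollary~\ref{real theor_decoupling}, and $\sum_{\vec{j}\in\mathbb{N}_n^d}d_{\vec{j}}^{(\theta)}r_{j_1}\cdots r_{j_d}=\sum_{\vec{i}\in\Delta_n^d}\theta_{\vec{i}}a_{\vec{i}}\mathbf{r}_{\vec{i}}$ on $[0,1]$, since each increasing tuple is hit by exactly its $d!$ permutations. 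Applying the left-hand (decoupling) inequality of Corollary~\ref{real theor_decoupling} with the Rademacher sequence and its $d$ independent copies, followed by the lower estimate \eqref{lower est} of Theorem~\ref{theor_mult_Rad} for the array $(d_{\vec{j}}^{(\theta)})$, yields
$$
\Big\|\sum_{\vec{i}\in\Delta_n^d}\theta_{\vec{i}}a_{\vec{i}}\mathbf{r}_{\vec{i}}\Big\|_{L_\infty}\ge C_d(2)^{-1}c_d\max_{k\in[d]}\sum_{l=1}^n\Big(\sum_{\vec{j}\in\mathbb{N}_n^d(k,l)}(d_{\vec{j}}^{(\theta)})^2\Big)^{1/2}.
$$
Now $(d_{\vec{j}}^{(\theta)})^2=\frac{1}{(d!)^2}a_{\vec{i}}^2$ does not depend on $\theta$, and a short count — for a fixed increasing tuple $\vec{i}$ with $l\in\vec{i}$ there are exactly $(d-1)!$ indices $\vec{j}\in\mathbb{N}_n^d(k,l)$ whose rearrangement is $\vec{i}$ — gives $\sum_{\vec{j}\in\mathbb{N}_n^d(k,l)}(d_{\vec{j}}^{(\theta)})^2=\frac{1}{d\cdot d!}\sum_{\vec{i}\in\Delta_n^d,\,l\in\vec{i}}a_{\vec{i}}^2\ge\frac{1}{d\cdot d!}\sum_{\vec{i}\in\Delta_n^d\cap\mathbb{N}_n^d(k,l)}a_{\vec{i}}^2$. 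Summing over $l$ and maximizing over $k$ gives $\|\sum_{\vec{i}\in\Delta_n^d}\theta_{\vec{i}}a_{\vec{i}}\mathbf{r}_{\vec{i}}\|_{L_\infty}\gtrsim_d M$, uniformly in $\theta$, hence also for $\min_\theta$.

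Combining the two bounds with $\min_\theta\le\mathsf{E}_\theta$ and passing back through \eqref{eq_square_cut_norm_mult} finishes the proof; the RUC statement for the chaos of order $d$ (Corollary~\ref{Theorema_RUC_mult_Rad chaos}) falls out along the way. I expect the only genuine subtlety to be the bookkeeping in the lower bound: tracking the decoupling constants and the combinatorial multiplicities, and noticing that the ``$\max_{1\le k\le d}$'' in \eqref{lower est} is redundant once the estimate is phrased via the symmetrized coefficients but is recovered, up to a factor depending only on $d$, upon returning to the strictly-upper-triangular indexing of $\Delta_n^d$.
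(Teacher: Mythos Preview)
Your proposal is correct and follows precisely the route indicated in the paper, which does not give an explicit proof but states that one proceeds ``precisely in the same way as in the case of the second-order Rademacher chaos in Section~\ref{Sec_chaos_Rad}, by using Theorem~\ref{theor_mult_Rad} and decoupling tools (see Corollary~\ref{real theor_decoupling})''; your diagonal restriction for the upper bound and symmetrization-plus-decoupling for the lower bound mirror exactly the proof of Theorem~\ref{theor_chaos}, with Lemma~\ref{Le1} replaced by \eqref{lower est} and Theorem~\ref{theor_bi_Rad} by \eqref{upper est}. The combinatorial bookkeeping you carry out (the $(d-1)!$ multiplicity count and the resulting $1/(d\cdot d!)$ factor) is accurate and is the only detail the paper leaves implicit.
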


\section{Estimates for discrepancy of edge-weighted graphs and hypergraphs}
\label{Sec_disc_edges_graph}

Recall that an {\it edge-weighted graph} is a triple $G=(V, E, W)$, where $V$ is the set of {\it vertices}, $E\subset \{(v_1,v_2):\,v_1,v_2\in V,\,v_1\neq v_2\}$ is the set of {\it edges}, and $W=\{w(e)\}_{e\in E}$, $w(e)\in\mathbb{R}$, is the set of {\it weights}. For every function (coloring) $\theta:\,E\to\{-1,1\}$ we define the {\it discrepancy} of $G$ with respect to $\theta$ by
$$
\mathrm{disc}(G,\theta):=\max_{V'\subset V}\Bigl|\sum_{e=(v_1,v_2)\in E,\,v_i\in V'}\theta(e)w(e)\Bigr|.
$$
Then, the {\it discrepancy} of the graph $G$ is defined as follows:
$$
\mathrm{disc}(G)=\min_{\theta}\mathrm{disc}(G,\theta),
$$
where the minimum is taken over all colorings $\theta$. 

Problems related to the estimation of discrepancy and its analogues are very popular and important (see, for instance, \cite{AlonSpencer} or \cite{RCh}). Recall that historically one of the first results in this direction is the classical  theorem by Erd\"{o}s and Spencer about discrepancy of the complete graph $K_n$ with $n$ vertices in the unweighted case, i.e., when $w(e)=1$ for every $e\in E$. Namely, from a more general result, proved in \cite{ErSp} and related to homogeneous complete hypergraphs (see \eqref{Erd-Sp intr}), it follows that 
$$
\mathrm{disc}(K_n)\asymp n^{\frac{3}{2}},\;\;n\in\mathbb{N},
$$
with universal constants.
%Also, Erd\"{o}s and Spencer established similar result for homogeneous complete hypergraphs (see Section \ref{Intro}).

As we will see further, the results on the multiple Rademacher system and the Rademacher chaos, proved in the preceding sections, allow to extend estimates from \cite{ErSp} to arbitrary edge-weighted graphs and (homogeneous)  hypergraphs.

%obtain far-reaching development of those of the paper \cite{ErSp},  because they involve sharp (up to order) two-sided estimates for the discrepancy of edge-weighted graphs and hypergraphs.

(a) {\bf Bipartite complete graphs.} 
Let $K_{n,m}$ be the complete bipartite graph with the vertex set divided into two parts: $V=\{v_1,v_2,\ldots,v_n\}$ and $U=\{u_1,u_2,\ldots,u_m\}$. Then, the edge  set $E$ of $K_{n,m}$ consists of the pairs $(v,u)$, $v\in V$, $u\in U$. Let us assign to each edge $e=(v_i, u_j)\in E$ a weight $w(e)=a_{i,j}\in\mathbb{R}$, and consider the problem of estimation of the discrepancy of the edge-weighted graph 
%$\mathrm{disc}(G)$. 
$(K_{n,m},\{a_{i,j}\})$. 

%Thus, in our problem we are talking about an {\it edge} coloring of a graph with weights on the edges, i.e. about placing $\pm$-signs in front of the weights, and about finding (in the constructive version of the problem) such a set of these signs $\pm$, or, equivalently, choosing a number $\omega\in[0,1]$ that specifies the signs $r_{i,j}(\omega)$, so that for each bipartite subgraph $K_{I,J}$ of the graph $K_{n,m}$ the total weight of this subgraph, taking into account the placed signs,

%Denote by $K_{I,J}$ the complete bipartite graph with parts $I\subset V$ and $J\subset U$. 
Let $\theta= (\theta_{i,j})$ be an edge-coloring of $(K_{n,m},\{a_{i,j}\})$, that is, $\theta_{i,j}:=\theta(e)$ if $e=(v_i, u_j)$. Observe that to each set of vertices of this graph can be assigned a certain pair of sets $I\subset V$ and $J\subset U$. Consequently, 
%Since every subset of vertices of the graph $K_{n,m}$ corresponds to some set of edges $\{(v,u):\,v\in I,u\in J\}$ for appropriate $I\subset V$ and $J\subset U$, 
the discrepancy of the graph $(K_{n,m},\{a_{i,j}\})$ with respect to a coloring $\theta$ is defined by 
$$
\mathrm{disc}(K_{n,m},\{a_{i,j}\},\theta):=\max_{I\subset V,\,J\subset U}\Big|\sum_{i\in I,j\in J}\theta_{i,j} a_{i,j}\Big|.
$$
%Припишем каждому ребру $e=(v_i, u_j)\in E(K_{n,m})$ вес $w(e)=a_{i,j}\in\mathbb{R}$, и зададимся вопросом отыскания $\mathrm{disc}(G)$. Таким образом, в нашей задаче речь идет о {\it реберной} раскраске графа с весами на ребрах, т.е. о расстановке $\pm$-знаков перед весами, и о поиске (в конструктивном варианте задачи) такого набора этих знаков $\pm$, или, равносильно, выборе числа $\omega\in[0,1]$, задающего знаки $r_{i,j}(\omega)$, чтобы для каждого двудольного подграфа $K_{I,J}$ графа $K_{n,m}$ суммарный вес этого подграфа с учетом расставленных знаков
%$$
%W(K_{I,J}, \omega):=\Big|\sum_{i\in I,j\in J}r_{i,j}(\omega) a_{i,j}\Big|
%$$
%был ограничен сверху оптимальным (минимальным) числом. Это число, согласно определению, и обозначается 
%$\mathrm{disc}(K_{n,m})=\mathrm{disc}(K_{n,m}, \{a_{i,j}\})$.
Recalling definition \eqref{def_cut_norm}, we see that this is exactly the cut-norm of the matrix $\big(\theta_{i,j}a_{i,j}\big)_{i\leqslant n, j\leqslant m}$, and hence
$$
\mathrm{disc}(K_{n,m}, \{a_{i,j}\})=\min_{\theta_{i,j}=\pm 1}{\|(\theta_{i,j}a_{i,j})\|}_{cut}.
$$
This relation combined with Corollary \ref{cor_cut_norm} implies the following:

\begin{theorem} For every edge-weighted complete bipartite graph $(K_{n,m},\{a_{i,j}\})$, where $n,m\in\mathbb{N}$ and $a_{i,j}\in\mathbb{R}$, $1\le i\le m$, $1\le j\le n$, we have
\begin{eqnarray*}
\mathrm{disc}(K_{n,m}, \{a_{i,j}\}) &\asymp& \mathsf{E}_\theta \mathrm{disc}(K_{n,m},\{a_{i,j}\},\theta)\\
&\asymp&
\max\Big\{\sum_{i=1}^n\Big(\sum_{j=1}^m a_{i,j}^2\Big)^{1/2},\sum_{j=1}^m\Big(\sum_{i=1}^n a_{i,j}^2\Big)^{1/2}\Big\}
\end{eqnarray*}
with equivalence constants independent of $n,m$ and $\{a_{i,j}\}$.
\end{theorem}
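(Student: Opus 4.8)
The plan is to reduce the statement directly to Corollary~\ref{cor_cut_norm}. The first step is to record the correspondence between vertex subsets of $K_{n,m}$ and pairs of index sets: a subset of the vertex set $V\cup U$ decomposes uniquely as $I\cup J$ with $I\subset V$ and $J\subset U$, and an edge $e=(v_i,u_j)$ is contained in this subset precisely when $v_i\in I$ and $u_j\in J$. Hence, for any edge-coloring $\theta=(\theta_{i,j})$ of $(K_{n,m},\{a_{i,j}\})$ one has
$$
\mathrm{disc}(K_{n,m},\{a_{i,j}\},\theta)=\max_{I\subset V,\,J\subset U}\Bigl|\sum_{i\in I,\,j\in J}\theta_{i,j}a_{i,j}\Bigr|,
$$
which by definition~\eqref{def_cut_norm} is exactly ${\|(\theta_{i,j}a_{i,j})\|}_{cut}$.

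Next I would take the minimum over all colorings $\theta$ on both sides of this identity; since it holds for each fixed $\theta$, this gives
$$
\mathrm{disc}(K_{n,m},\{a_{i,j}\})=\min_{\theta_{i,j}=\pm1}{\|(\theta_{i,j}a_{i,j})\|}_{cut},
$$
and, taking instead the uniform expectation over $\theta$, likewise $\mathsf{E}_\theta\,\mathrm{disc}(K_{n,m},\{a_{i,j}\},\theta)=\mathsf{E}_\theta{\|(\theta_{i,j}a_{i,j})\|}_{cut}$. At this point both quantities appearing in the theorem have been rewritten in terms of the minimal and the averaged cut-norms of the matrix $A=(a_{i,j})_{i\le n,\,j\le m}$.

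Finally I would invoke Corollary~\ref{cor_cut_norm}, which asserts precisely that
$$
\mathsf{E}_\theta{\|(\theta_{i,j}a_{i,j})\|}_{cut}\asymp\min_{\theta_{i,j}=\pm1}{\|(\theta_{i,j}a_{i,j})\|}_{cut}\asymp\max\Bigl\{\sum_{i=1}^n\Bigl(\sum_{j=1}^m a_{i,j}^2\Bigr)^{1/2},\ \sum_{j=1}^m\Bigl(\sum_{i=1}^n a_{i,j}^2\Bigr)^{1/2}\Bigr\}
$$
with universal constants; combining this with the two identities of the previous step yields the theorem. There is essentially no obstacle here: all the analytic content has already been absorbed into Corollary~\ref{cor_cut_norm} (and through it into Theorem~\ref{theor_bi_Rad} and the RUC-property of the second-order multiple Rademacher system), so the only point requiring a small amount of care is the combinatorial bookkeeping of the vertex-subset/index-set correspondence together with the observation that the cut-norm identity holds coloring-by-coloring, which is what permits $\min_\theta$ and $\mathsf{E}_\theta$ to be applied to it term by term.
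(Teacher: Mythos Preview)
Your proposal is correct and follows essentially the same approach as the paper: you identify $\mathrm{disc}(K_{n,m},\{a_{i,j}\},\theta)$ with the cut-norm ${\|(\theta_{i,j}a_{i,j})\|}_{cut}$ via the vertex-subset/index-set correspondence, then apply Corollary~\ref{cor_cut_norm}. The paper does exactly this, with your write-up actually spelling out the combinatorial bookkeeping and the passage to $\min_\theta$ and $\mathsf{E}_\theta$ a bit more explicitly than the paper itself.
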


(b) {\bf Edge-weighted graphs.} 
Assume first that $K_n$ is the complete graph with $n$ vertices and $a_{i,j}$ are arbitrary weights assigned to the edges $(v_i,v_j)$, $1\leqslant i<j\leqslant n$. In this case, the formula for the discrepancy with respect to an edge-coloring $\theta= (\theta_{i,j})_{1\le i<j\le n}$ takes the form:
$$
\mathrm{disc}(K_n,\{a_{i,j}\}_{i<j},\theta):=\max\Big\{\Big|\sum_{i,j\in I,\atop i<j}\theta_{i,j} a_{i,j}\Big|:\,I\subset [n]\Big\}
%\max_{V'\subset V}\Bigl|\sum_{e=(v_1,v_2)\in E,\,v_i\in V'}\theta(e)w(e)\Bigr|,
$$
and hence it coincides with the modified cut-norm \eqref{def_square} of the matrix $(\theta_{i,j} a_{i,j})_{1\le i<j\le n}$. Therefore, Corollary \ref{cor_square_norm} implies the following:
%$$
%\mathrm{disc}(K_{n}, \{a_{i,j}\}_{i<j}):=\min_{\omega\in[0,1]}\max\Big\{\Big|\sum_{i,j\in I,\atop i<j}r_{i,j}(\omega) a_{i,j}\Big|:\,I\subset\overline{[1,n]}\Big\}.
%$$
%Максимум под знаком минимума совпадает с $\square$-нормой. Поэтому, согласно %следствию \ref{cor_square_norm}, 
\begin{theorem} 
\label{complete graph}
Let $(K_{n},\{a_{i,j}\}_{i<j})$ be an edge-weighted complete graph, where $n\in\mathbb{N}$ and $a_{i,j}\in\mathbb{R}$, $1\le i<j\le n$, are arbitrary. Then,
\begin{eqnarray*}
\mathrm{disc}(K_{n}, \{a_{i,j}\}_{i<j}) &\asymp& \mathsf{E}_\theta \mathrm{disc}(K_{n},\{a_{i,j}\}_{i<j},\theta)\\
&\asymp&
\max\Big\{\sum_{i=1}^{n-1}\Big(\sum_{j=i+1}^n a_{i,j}^2\Big)^{1/2},\sum_{j=2}^n\Big(\sum_{i=1}^{j-1} a_{i,j}^2\Big)^{1/2}\Big\}
\end{eqnarray*}
with equivalence constants independent of $n$ and $\{a_{i,j}\}$.
\end{theorem}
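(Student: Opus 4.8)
The plan is to reduce Theorem \ref{complete graph} to the already-established Corollary \ref{cor_square_norm}, exactly as the discussion preceding the statement suggests. First I would fix an edge-coloring $\theta=(\theta_{i,j})_{1\le i<j\le n}$ and observe that a subset of vertices of $K_n$ is nothing but a subset $I\subset [n]$, and an edge $(v_i,v_j)$ lies "inside" this vertex set precisely when $i,j\in I$. Hence
\begin{equation*}
\mathrm{disc}(K_n,\{a_{i,j}\}_{i<j},\theta)=\max\Big\{\Big|\sum_{i,j\in I,\,i<j}\theta_{i,j}a_{i,j}\Big|:\,I\subset[n]\Big\}={\|(\theta_{i,j}a_{i,j})\|}_{cut}^*,
\end{equation*}
by the very definition \eqref{def_square} of the modified cut-norm. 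Taking the minimum over all colorings $\theta$ then gives $\mathrm{disc}(K_n,\{a_{i,j}\}_{i<j})=\min_{\theta_{i,j}=\pm1}{\|(\theta_{i,j}a_{i,j})\|}_{cut}^*$.

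Next I would invoke Corollary \ref{cor_square_norm}, which asserts that for any strictly upper triangular matrix $A=(a_{i,j})_{1\le i<j\le n}$,
\begin{equation*}
\mathsf{E}_\theta{\|(\theta_{i,j}a_{i,j})\|}_{cut}^*\asymp\min_{\theta_{i,j}=\pm1}{\|(\theta_{i,j}a_{i,j})\|}_{cut}^*\asymp\max\Big\{\sum_{i=1}^{n-1}\Big(\sum_{j=i+1}^n a_{i,j}^2\Big)^{1/2},\sum_{j=2}^n\Big(\sum_{i=1}^{j-1}a_{i,j}^2\Big)^{1/2}\Big\},
\end{equation*}
with universal constants. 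Combining the displayed identifications for $\mathrm{disc}(K_n,\{a_{i,j}\}_{i<j},\theta)$ and $\mathrm{disc}(K_n,\{a_{i,j}\}_{i<j})$ with this three-term equivalence immediately yields all three quantities in the theorem are comparable with universal constants, and in particular $\mathsf{E}_\theta\mathrm{disc}(K_n,\{a_{i,j}\}_{i<j},\theta)\asymp\mathrm{disc}(K_n,\{a_{i,j}\}_{i<j})\asymp\max\{\dots\}$, which is exactly the claim.

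The only genuine content beyond Corollary \ref{cor_square_norm} is the translation between graph-theoretic and matrix-theoretic language, and this is essentially bookkeeping: the point is that vertex subsets of $K_n$ correspond bijectively to index subsets $I\subset[n]$, and the edges internal to such a subset are indexed by $\{(i,j):i,j\in I,\ i<j\}$, so the discrepancy sum and the modified cut-norm sum are literally the same object. There is no analytic obstacle here — the heavy lifting (the RUC property of the second-order Rademacher chaos and the equivalence \eqref{eq_square_cut_norm} relating $L_\infty$-norms of chaos sums to ${\|\cdot\|}_{cut}^*$) was already done in Section \ref{Sec_chaos_Rad}. If I wanted to be fully explicit I might also remark that the expectation version follows because $\mathsf{E}_\theta\mathrm{disc}(K_n,\{a_{i,j}\}_{i<j},\theta)=\mathsf{E}_\theta{\|(\theta_{i,j}a_{i,j})\|}_{cut}^*$ by the same identification applied inside the expectation; no further argument is needed. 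Thus the proof is short, and I expect no step to present a real difficulty.
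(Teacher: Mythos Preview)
Your proposal is correct and follows exactly the paper's approach: the paper likewise observes that $\mathrm{disc}(K_n,\{a_{i,j}\}_{i<j},\theta)={\|(\theta_{i,j}a_{i,j})\|}_{cut}^*$ by definition \eqref{def_square}, and then simply invokes Corollary \ref{cor_square_norm}. There is nothing to add.
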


%(c) {\bf General graphs.} 
Denoting the weights as $w(e)$, $e\in E$, rather than  $a_{i,j}$. $i<j$, one can readily check that
$$
\max\Big\{\sum_{i=1}^{n-1}\Big(\sum_{j=i+1}^n a_{i,j}^2\Big)^{1/2},\sum_{j=2}^n\Big(\sum_{i=1}^{j-1} a_{i,j}^2\Big)^{1/2}\Big\}\asymp \sum_{v\in V}\Bigl(\sum_{e\in E:\,v\in e}w(e)^2\Bigr)^{1/2}.
$$
Hence, the result of Theorem \ref{complete graph} can be re-written as follows:
%Taking into account equality \eqref{for general}, we can re-write the result of Theorem \ref{complete graph} as follows:
%Так как выражение справа эквивалентно правой части формулы \eqref{pre_decoupling} (см. рассуждение после этой формулы), то полученному выражению для $\mathrm{disc}(K_{n})$ можно придать следующую форму:
$$
\mathrm{disc}(K_{n}, \{w(e)\}_{e\in E})\asymp \sum_{v\in V}\Bigl(\sum_{e\in E:\,v\in e}w(e)^2\Bigr)^{1/2},
$$
where $V$ and $E$ are the sets of vertices and edges of the graph $K_n$, respectively. Since every edge-weighted graph with $n$ vertices can be obtained from $K_n$ by assigning zero weights to some edges, we arrive at the following general result which covers also the above-considered case of  bipartite complete graphs.
% Введем еще для произвольного графа с весами на ребрах $G=(V,E,W)$ и каждого фиксированного $\omega\in[0,1]$ величину
%$$
%W(G, \omega):=\Big|\sum_{e\in E}r_{i,j}(\omega) w(e)\Big|.
%$$
%В частности, такое обозначение будем использовать и для порожденных подграфов $G'=G[V']$ графа $G$, т.е. для графов $G'=(V',E', W')$ с условием $V'\subset V$, $E'= (V'\times V')\cap E$, $w'(e')=w(e')$ для $e'\in E'$. Теперь, используя следствие \ref{cor_square_norm}, мы можем сформулировать теорему об уклонении для произвольного графа с весами на ребрах.
\begin{theorem}\label{theor_weighted_graph} Let $G=(V,E,W)$ be an  arbitrary edge-weighted graph. Then, with universal constants we have
$$
\mathrm{disc}(G)\asymp \mathsf{E}_\theta \max_{V'\subset V}\Big|\sum_{e=(v_1,v_2)\in E, v_i\in V'}\theta(e)w(e)\Big|\asymp
 \sum_{v\in V}\Bigl(\sum_{e\in E:\,v\in e}w(e)^2\Bigr)^{1/2},
$$
where the expectation is taken over all colorings $\theta:\,E\to \{\pm 1\}$.
\end{theorem}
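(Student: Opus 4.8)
The plan is to reduce the statement to the already-proved Theorem~\ref{complete graph} on edge-weighted complete graphs. Write $V=\{v_1,\dots,v_n\}$, $n=|V|$, and for $1\le i<j\le n$ set $a_{i,j}:=w(e)$ when $e=(v_i,v_j)\in E$ and $a_{i,j}:=0$ otherwise. I would first observe that edges carrying zero weight are inert: whatever sign a coloring assigns to them, they contribute nothing to any sum $\sum_{e=(v_1,v_2)\in E,\,v_i\in V'}\theta(e)w(e)$. Hence, identifying $V'\subset V$ with $I=\{i:\,v_i\in V'\}\subset[n]$, one has $\mathrm{disc}(G)=\mathrm{disc}(K_n,\{a_{i,j}\}_{i<j})$, and likewise $\mathsf{E}_\theta\max_{V'\subset V}|\sum\theta(e)w(e)|$ equals the averaged discrepancy $\mathsf{E}_\theta\,\mathrm{disc}(K_n,\{a_{i,j}\}_{i<j},\theta)$, the expectation on the right taken over all $\binom n2$ edge-signs of $K_n$ (the extra signs being irrelevant on zero-weight edges). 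Theorem~\ref{complete graph} applied to $(a_{i,j})$ then gives
\[
\mathrm{disc}(G)\asymp\mathsf{E}_\theta\max_{V'\subset V}\Big|\sum_{e=(v_1,v_2)\in E,\,v_i\in V'}\theta(e)w(e)\Big|\asymp\max\Big\{\sum_{i=1}^{n-1}\Big(\sum_{j=i+1}^n a_{i,j}^2\Big)^{1/2},\ \sum_{j=2}^n\Big(\sum_{i=1}^{j-1} a_{i,j}^2\Big)^{1/2}\Big\}.
\]

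It then remains to match the right-hand side, up to a factor $2$, with $\sum_{v\in V}(\sum_{e\in E:\,v\in e}w(e)^2)^{1/2}$. I would use that for a vertex $v_i$ one has $\sum_{e\in E:\,v_i\in e}w(e)^2=\sum_{j>i}a_{i,j}^2+\sum_{j<i}a_{j,i}^2$, so that the target sum is $\sum_{i=1}^n(\sum_{j>i}a_{i,j}^2+\sum_{j<i}a_{j,i}^2)^{1/2}$. Applying the elementary bound $\max\{\sqrt s,\sqrt t\}\le\sqrt{s+t}\le\sqrt s+\sqrt t$ for $s,t\ge0$ termwise, then splitting the square root and summing over $i$ — together with the relabeling $\sum_{i=2}^n(\sum_{j<i}a_{j,i}^2)^{1/2}=\sum_{j=2}^n(\sum_{i=1}^{j-1}a_{i,j}^2)^{1/2}$ — shows that $\sum_{i=1}^n(\sum_{j>i}a_{i,j}^2+\sum_{j<i}a_{j,i}^2)^{1/2}$ lies between the maximum of the two sums in the display above and their sum, i.e. between that maximum and twice it. Chaining the two equivalences finishes the proof; specializing to zero weights off $E$ also subsumes the complete bipartite case treated earlier.

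I do not expect a genuine obstacle here: all the analytic content sits in Theorem~\ref{complete graph}, which rests on the RUC property of the second-order Rademacher chaos in $L_\infty$ (via Corollary~\ref{cor_square_norm}). The only points requiring care are (i) the verification that passing from $G$ to $K_n$ alters neither the discrepancy nor its average — immediate because zero-weight edges are inert — and (ii) the comparison of $\sqrt{s+t}$ with $\max\{\sqrt s,\sqrt t\}$, which is an absolute-constant affair. As a sanity check, taking $w(e)\equiv1$ on $K_n$ recovers $\mathrm{disc}(K_n)\asymp n^{3/2}$, the graph case of the Erd\"os--Spencer estimate.
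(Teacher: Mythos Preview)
Your proposal is correct and follows exactly the route taken in the paper: reduce to the complete-graph case (Theorem~\ref{complete graph}) by assigning zero weights to the missing edges, and then verify the elementary equivalence between the $\max$ of the two row/column sums and $\sum_{v\in V}(\sum_{e\ni v}w(e)^2)^{1/2}$. Your write-up in fact supplies more detail on both points than the paper itself does.
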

%\begin{theorem}\label{theor_weighted_graph} Пусть $G=(V,E,W)$ --- произвольный граф с весами $W=\{w(e)\}_{e\in E}$ на ребрах. Тогда
%$$
%\mathrm{disc}(G)\asymp \sum_{v\in V}\Bigl(\sum_{e\in E:\,v\in e}w(e)^2\Bigr)^{1/2} \asymp \mathsf{E}_\omega \max_{V'\subset V}W(G[V'],\omega)
%$$
%с константами эквивалентности, не зависящими от графа $G$.
%\end{theorem}

%В невесовом случае ($w(e)\equiv 1$) соответствующая задача для полного графа $K_n$ была решена Эрдешем и Спенсером \cite{ErSp}, которые показали, что для такого графа
%$$
%\mathrm{disc}(K_n)\asymp n^{\frac{3}{2}}.
%$$

(c) {\bf Homogeneous edge-weighted hypergraphs.} 

%One can easily see that Theorem \ref{theor_mult_Rad}, декаплинга и соотношения \eqref{eq_square_cut_norm_mult} imply sharp (up to constants) two-sided estimates for the discrepancy of weighted complete $d$-homogeneous hypergraphs. 

Let $n,d\in\mathbb{N}$, $2\le d\le n$ and let $H_{n,d}$ be the {\it complete $d$-homogeneous}  hypergraph with $n$ vertices, i.e., the edge set $E$ of $H_{n,d}$ consists of all subsets of the vertex set $V$ of cardinality $d$. 
%Let us introduce the discrepancy of $H_{n,d}$ by 
%$$
%\mathrm{disc}(H_{n,d})=\min_{\theta}\max_{V'\subset V}\Bigl|\sum_{e\in E,\,e\subset V'}\theta(e)\Bigr |,
%$$
%where the minimum is taking over all colorings $\theta:\,E\to\{-1,1\}$. As is already mentioned, in \cite{ErSp}, Erd\"os and Spencer proved the following estimates:
%\begin{equation}
%\label{Erd-Sp}
%c_d n^{\frac{d+1}{2}}\leqslant \mathrm{disc}(H_{n,d})\leqslant C_d n^{\frac{d+1}{2}},
%\end{equation}
%with some constants $c_d$ and $C_d$ independent of $n$.
Let us assign to each edge $e\in E$ of $H_{n,d}$ a weight $w(e)\in\mathbb{R}$. We define the discrepancy of resulting edge-weighted hypergraph $H_{n,d}(W)$ by the formula
$$
\mathrm{disc}(H_{n,d}(W)):=\min_{\theta}\max_{V'\subset V}\Bigl|\sum_{e\in E,\,e\subset V'}\theta(e)w(e)\Bigr|,
$$
where the minimum is taken over all edge-colorings $\theta$. Then, recalling the definition of the multi-dimensional modified cut-norm (see \eqref{quadrat_mult})
and applying Corollary \ref{cor_cut_norm chaos}, we obtain the following generalization of the above 
Erd\"os--Spencer result \cite{ErSp} (see also Section \ref{Intro}) to edge-weighted hypergraphs.

\begin{theorem}\label{theor_weighted_hypergraph} 
Let $n,d\in\mathbb{N}$, $2\le d\le n$. With some constants $c_d'$ and $C_d'$ independent of $n$ and the set of weights $W$, we have
\begin{eqnarray*}
c_d' \cdot \sum_{v\in V}\Bigl(\sum_{e\in E:\,v\in e}w(e)^2\Bigr)^{1/2}&\leqslant& \mathrm{disc}(H_{n,d}(W))\\&\leqslant& \mathsf{E}_\theta  \max_{V'\subset V}\Bigl|\sum_{e\in E,\,e\subset V'}\theta(e)w(e)\Bigr|\\
&\leqslant & C_d' \cdot\sum_{v\in V}\Bigl(\sum_{e\in E:\,v\in e}w(e)^2\Bigr)^{1/2}.
\end{eqnarray*}
%где математическое ожидание $\mathsf{E}$ берется по всем функциям $f\in F$, которые считаются равновероятными.
\end{theorem}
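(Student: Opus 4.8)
The plan is to reduce the statement, exactly as in parts (a) and (b) of this section, to Corollary \ref{cor_cut_norm chaos} by means of a purely combinatorial dictionary between the hypergraph language and the modified multidimensional cut-norm \eqref{quadrat_mult}. First I would identify the edge set $E$ of $H_{n,d}$ with $\Delta_n^d$: an edge $e=\{i_1<i_2<\dots<i_d\}\subset V=[n]$ corresponds to the multi-index $\vec{j}=(i_1,\dots,i_d)\in\Delta_n^d$, so that the weight $w(e)$ becomes $a_{\vec{j}}$. Given a coloring $\theta:E\to\{-1,1\}$, writing $\theta_{\vec{j}}:=\theta(e)$ and noting that a subset $V'\subset V$ is merely a subset $I\subset[n]$, one sees that $\max_{V'\subset V}\bigl|\sum_{e\subset V'}\theta(e)w(e)\bigr|$ is exactly $\|(\theta_{\vec{j}}a_{\vec{j}})\|_{cut}^*$. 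Hence $\mathrm{disc}(H_{n,d}(W))=\min_{\theta_{\vec{j}}=\pm1}\|(\theta_{\vec{j}}a_{\vec{j}})\|_{cut}^*$ and $\mathsf{E}_\theta\max_{V'}|\cdots|=\mathsf{E}_\theta\|(\theta_{\vec{j}}a_{\vec{j}})\|_{cut}^*$, while the middle inequality of the theorem is trivial, a minimum never exceeding an average.

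Next I would apply Corollary \ref{cor_cut_norm chaos}, which yields, with constants depending only on $d$,
$$
\mathsf{E}_\theta\|(\theta_{\vec{j}}a_{\vec{j}})\|_{cut}^*\asymp\min_{\theta_{\vec{j}}=\pm1}\|(\theta_{\vec{j}}a_{\vec{j}})\|_{cut}^*\asymp\max_{1\le k\le d}\sum_{l=1}^n\Bigl(\sum_{\vec{j}\in\Delta_n^d\cap\mathbb{N}_n^d(k,l)}a_{\vec{j}}^2\Bigr)^{1/2}.
$$
It then remains to prove that this last maximum is equivalent, up to constants depending only on $d$, to $\sum_{v\in V}\bigl(\sum_{e\in E:\,v\in e}w(e)^2\bigr)^{1/2}$. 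The key point is that, since the coordinates of an index $\vec{j}\in\Delta_n^d$ are strictly increasing, a fixed $l\in[n]$ occurs in at most one coordinate of $\vec{j}$; hence the sets $\Delta_n^d\cap\mathbb{N}_n^d(k,l)$, $k=1,\dots,d$, are pairwise disjoint, and their union is precisely the set of edges containing the vertex $v_l$. This gives the identity $\sum_{e\in E:\,v_l\in e}w(e)^2=\sum_{k=1}^d\sum_{\vec{j}\in\Delta_n^d\cap\mathbb{N}_n^d(k,l)}a_{\vec{j}}^2$ for every $l$.

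From this identity both comparisons are elementary: for each fixed $k$ one has $\sum_{\vec{j}\in\Delta_n^d\cap\mathbb{N}_n^d(k,l)}a_{\vec{j}}^2\le\sum_{e:\,v_l\in e}w(e)^2$, so $\max_k\sum_l(\cdots)^{1/2}\le\sum_{v}(\cdots)^{1/2}$; conversely, using $\sqrt{x_1+\dots+x_d}\le\sqrt{x_1}+\dots+\sqrt{x_d}$ and interchanging the sums over $k$ and $l$ gives $\sum_v(\cdots)^{1/2}\le\sum_{k=1}^d\sum_l(\cdots)^{1/2}\le d\cdot\max_k\sum_l(\cdots)^{1/2}$. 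Chaining these two equivalences with the displayed one produces the theorem, the constants $c_d'$ and $C_d'$ absorbing those of Corollary \ref{cor_cut_norm chaos} together with the factor $d$ (the hypothesis $2\le d\le n$ being used to guarantee $\Delta_n^d\neq\emptyset$ and the applicability of that corollary); as in Theorem \ref{theor_weighted_graph}, allowing some weights to vanish then covers arbitrary $d$-homogeneous hypergraphs.

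I do not anticipate a real obstacle here: all the analytic depth — the decoupling machinery and the RUC property of the order-$d$ Rademacher chaos in $L_\infty$ — has already been packaged into Corollary \ref{cor_cut_norm chaos}, and what remains is the bookkeeping above. The only step requiring slight care is tracking the disjointness of the sets $\Delta_n^d\cap\mathbb{N}_n^d(k,l)$ in $k$, which is precisely what turns the first relation into an equality rather than a one-sided estimate.
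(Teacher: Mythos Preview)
Your proposal is correct and follows exactly the paper's own route: the theorem is stated immediately after observing that the discrepancy is the modified cut-norm \eqref{quadrat_mult} and invoking Corollary \ref{cor_cut_norm chaos}, with the combinatorial equivalence between $\max_{k}\sum_l(\cdots)^{1/2}$ and $\sum_{v\in V}\bigl(\sum_{e\ni v}w(e)^2\bigr)^{1/2}$ left implicit (it is spelled out only in the case $d=2$ before Theorem \ref{theor_weighted_graph}). Your disjointness argument for the sets $\Delta_n^d\cap\mathbb{N}_n^d(k,l)$ in $k$ is precisely the missing detail and is correct.
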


Observe that in the case when $w(e)\equiv 1$ we have
$$
\sum_{v\in V}\Bigl(\sum_{e\in E:\,v\in e}w(e)^2\Bigr)^{1/2}=n\cdot \sqrt{C_{n-1}^{d-1}}\asymp n^{\frac{d+1}{2}},
$$
with some constants depending only on $d$. Hence, by Theorem \ref{theor_weighted_hypergraph}, we obtain in particular estimates \eqref{Erd-Sp intr}, proved by Erd\"os and Spencer.

As above, this result can be extended immediately to arbitrary (not necessarily complete) homogeneous edge-weighted hypergraphs.


\begin{thebibliography}{199}

\Large

\bibitem{APSS} R.~Adamczak, J.~Prochno, M.~Strzelecka, M.~Strzelecki, {\it Norms of structured random matrices}, Math. Ann. \textbf{388} (2024), 3463--3527. 

\bibitem{AK}
F.~Albiac, N.~J.~Kalton, {\it Topics in Banach space theory}, Springer, New York, 2006.

\bibitem{Alon-15} N.~Alon, {\it Approximating sparse binary matrices in the cut-norm}, Linear Algebra and its Appl. \textbf{486} (2005), 409--418.

\bibitem{Alon-Naor} N.~Alon and A.~Naor, {\it Approximating the cut-norm via Grothendieck's inequality}, SIAM J. on Computing, \textbf{35}:4 (2006), 787--803.

\bibitem{AlonSpencer} N.~Alon and J.~H.~Spencer, {\it The probabilistic method}, 
3rd ed.  John Wiley and Sons, Inc., 2008.

\bibitem {AVKK} N.~Alon, W.~F. de la Vega, R.~Kannan, M.~Karpinski, {\it Random sampling and approximation of MAX-CSP problems}, J. Comput. System Sci. \textbf{67} (2003) 212--243.


\bibitem{ABC} M.~Anthony, E.~Boros, Y.~Crama et al. {\it Quadratic reformulations of nonlinear binary optimization problems}, Math. Program. 162, 115--144 (2017)

\bibitem{AsBook} S.~V.~Astashkin, {\it The Rademacher system in function spaces}, Birkh\"{a}user, Switzerland, 2020.

\bibitem{As1998} S.~V.~Astashkin, {\it Rademacher chaos in symmetric spaces}, East J. Approx., \textbf{4}:3 (1998), 311--336.

\bibitem{As2000} S.~V.~Astashkin, {\it Rademacher chaos in symmetric spaces, II}, East J. Approx., \textbf{6}:1 (2000), 71--86.

\bibitem{astashkin-curbera-tikhomirov}
S.~V.~Astashkin, G.~P.~Curbera, K.~E.~Tikhomirov, {\it On the existence of \textit{RUC} systems in rearrangement invariant spaces}, Math. Nachr. \textbf{289} (2016), 175--186.

\bibitem{astashkin-curbera}
S.~V.~Astashkin, G.~P.~Curbera, {\it Random unconditional convergence and divergence in Banach spaces close to $L^1$}, Rev. Mat. Complut., \textbf{31}:2 (2018), 351--377.

\bibitem{AL_AA} S.~V.~Astashkin and K.~V.~Lykov, {\it Sparse Rademacher chaos in symmetric spaces}, St. Petersburg Math. J. \textbf{28}:1 (2017), 1--20.

\bibitem{AL_AA-24} S.~V.~Astashkin, K.~V. Lykov, {\it On unconditionality of
fractional Rademacher chaos in symmetric spaces}, Izvestiya: Math. \textbf{88}:1 (2024), 1--17.

\bibitem{Bennett} G.~Bennett, {\it Schur multipliers}, Duke Math. J. \textbf{44}:3 (1977), 603--639.

\bibitem{BGN-75} G.~Bennett, V.~Goodman, and C.~M.~Newman, {\it Norms of random matrices}, Pacific J. Math. \textbf{59}:2 (1975), 359--365.

\bibitem {BKPS} P.~Billard, S.~Kwapie\'n, A.~Pe{\l}czy\'nski,  Ch.~Samuel, {\it Biorthogonal systems of random unconditional convergence in Banach spaces}, Longhorn Notes. Texas Funct. Anal. Seminar, 1985--1986, Univ. Texas, Austin, TX, 1986, 13--35.

\bibitem{Blei} R.~Blei, {\it Analysis in Integer and Fractional Dimensions}, Cambridge Studies in Advanced Mathematics 71, Cambridge Univ. Press, Cambridge, UK, 2001.

\bibitem{Bonami70} A.~Bonami, {\it \'{E}tude des coefficients de Fourier des fonctions de $L^p(G)$}, Ann. Inst. Fourier (Grenoble) 20 (1970), 335--402.

\bibitem{Bra} M.~Sh.~Braverman,  {\it Independent Random Variables and Rearrangement Invariant Spaces}, Cambridge: Cambridge University Press, 1994.

\bibitem {DG} V.~H.~De la Pe\~{n}a and E.~Gin\'{e}, {\it Decoupling: from dependence to independence}, Prob. Its Appl. Springer-Verlag, New York, Berlin, Heidelberg, 1999.

\bibitem{dodds-semenov-sukochev}
P.~G.~Dodds, E.~M.~Semenov, F.~A.~Sukochev, {\it RUC systems in rearrangement invariant spaces}, Studia Math., \textbf{151} (2002), 161--173.

\bibitem{ErSp} P.~Erd\"{o}s, J.~Spencer, {\it Imbalances in $k$-colorations}, 
{\it Networks,} {\bf 1} (1971/1972), 379--385.

\bibitem{garling-tomczak-jaegermann}
D.~J.~H.~Garling, N.~Tomczak-Jaegermann, {\it RUC-systems and Besselian systems in Banach spaces},, Math. Proc. Cambridge Philos. Soc., \textbf{106} (1989), 163--168.

\bibitem{GKH} F.~Glover, G.~Kochenberger, R.~Hennig et al. {\it Quantum bridge analytics I: a tutorial on formulating and using QUBO models}, Ann. Oper. Res. \textbf{314} (2022), 141--183. 

\bibitem{Haag} P.~Haagerup, {\it The best constants in the Khintchine inequality}, Studia Math., \textbf{70}:3 (1981), 231--283.

\bibitem{Karp} R.~M.~Karp, {\it Reducibility among Combinatorial Problems}, in Book: {Complexity of computer computations}, eds. Miller R.E., Thatcher J.W., Springer Verlag, Plenum Press, 1972, pp. 85--103. 

\bibitem{KashinSaakyan} B.~S.~Kashin and A.~A.~Saakyan, {\it Orthogonal series}, Amer. Math, Soc. Providence, RI, 1989.

\bibitem{khint} A.~Khintchine, {\it {U}ber dyadische Br\"{u}che},  Math. Zeitschrift, \textbf{18} (1923), 109--116.

\bibitem{KHG} G.~Kochenberger, J.-K.~Hao, F.~Glover et al. {\it The unconstrained binary quadratic programming problem: a survey}, J. Comb. Optim. \textbf{28} (2014), 58--81.

\bibitem{KPS} S.~G.~Krein, Yu.~I.~Petunin and E.~M.~Semenov, {\it Interpolation of Linear Operators}, Am. Math. Soc. Providence 1982.

\bibitem {KwW} S.~Kwapie\'n and W.~A.~Woyczy\'nski, {\it Random Series and Stochastic Integrals. Single and Multiple}, Birkh\"{a}user, Boston, 1992.

\bibitem{LedTal} M.~Ledoux, M.~Talagrand, {\it Probability in Banach spaces}, Springer-Verlag Berlin Heidelberg, 1991.

%\bibitem{LT1} J.~Lindenstrauss, L.~Tzafriri, {\it Classical Banach spaces I, Sequence spaces}, Berlin; Heidelberg; New York: Springer-Verlag, 1977.

\bibitem{LT} J.~Lindenstrauss, L.~Tzafriri, {\it Classical Banach spaces II, Function spaces}, Berlin; Heidelberg; New York: Springer-Verlag, 1979.

\bibitem {Litt1} J.~E.~Littlewood, {\it On bounded bilinear forms in an infinite number of variables}, Quart. J. Math. Oxford, {\bf 1} (1930), 164--174.

%\bibitem{PeshShir} G. Peskir and A. N. Shiryaev, {\it The Khintchine inequalities and martingale expanding sphere of their action}, Russian Math. Surveys \textbf{50}:5 (1995), 849--904.



%\bibitem{BleiJanson} R.~Blei, S.~Janson, "Rademacher chaos: tail estimates versus limit theorem"\,, Ark. Mat., \textbf{42}:1 (2004), 13--29.


\bibitem{LAT} J.~Lopez-Abad, P.~Tradacete, {\it Bases of random unconditional convergence in Banach spaces}, Trans. Amer. Math. Soc., \textbf{368}:12 (2016), 9001--9032.

\bibitem{Lorentz} G.~G.~Lorentz, {\it Direct theorems on methods of summability II}, Canadian J. Math. \textbf{3} (1951), 236--256.

%\bibitem{Shi} А.Н.~Ширяев, {\it Вероятность, кн.~1}, М.: МЦНМО, 2004.

%\bibitem{Zyg} А.~Зигмунд, {\it Тригонометрические ряды, т.~1}, М.: Мир, 1965.

%\bibitem{MLP} Б.М.~Макаров, М.Г.~Голузина, А.А.~Лодкин, А.Н.~Подкорытов,  {\it Избранные задачи по вещественному анализу},  Санкт-Петербург: Невский Диалект, БХВ-Петербург, 2004.

%\bibitem{KW} S.~Kwapie\'n and W.~A.~Woyczy\'nski, {\it Random Series and Stochastic Integrals. Single and Multiple}, Birkh\"{a}user, Boston, 1992.

%\bibitem{Raigor} А.М. Райгородский, Математика раскрасок, Матем. просв., сер. 3, вып. 27, 2021, 99-127. 

%\bibitem{ErdeshSpencer} P. Erd\"os, J. Spencer, Imbalances in $k$-Colorations, Networks, 1:4 (1971), 379--385.

%\bibitem{MatVon} J. Matou\v{s}ek, J. Vondr\'{a}k, The Probabilistic Method, Lecture Notes, 2008.

%\bibitem{Nag} С. В. Нагаев, Некоторые предельные теоремы для больших
%уклонений, Теория вероятн. и ее примен., 1965, том 10, no. 2, 231--254.


\bibitem{RCh} A.~M.~Raigorodskii, D.~D.~Cherkashin, {\it Extremal problems in hypergraph colourings}, Russ. Math. Surveys, {\bf 75}:1 (2020), 89--146.


%\bibitem{Che} J. Balogh, D. Cherkashin, S. Kiselev, {\it European Journal of Combinatorics}, {\bf 79} (2019), 228--236.

%\bibitem{Bonami68} A. Bonami, {\it Ensembles $\Lambda(p)$ dans de dual de $D^\infty$}, Ann. Inst. Fourier (Grenoble), 18:2 (1968), 193–204


%\bibitem{Spencer85} Spencer J.H., {\it Six standard deviations suffice}, Trans. Am. Math. Soc. 289(2), 679–706 (1985).



\bibitem{RS} V.~A.~Rodin, E.~M.~Semyonov, {\it Rademacher series in symmetric spaces}, Anal. Math., \textbf{1}:3 (1975), 207--222.

\bibitem{Szarek} S.~J.~Szarek, {\it On the best constant in the Khinchin inequality}, Studia Math. \textbf{58}:2 (1976), 197--208.

\bibitem{Talagrand} M.~Talagrand,{\it Mean field models for spin glasses}, Springer, Berlin, Heidelberg, 2011.

\bibitem{Talagrand2} M.~Talagrand, {\it Spin Glasses: A Challenge for Mathematicians. Cavity and Mean Field Models}, Springer, Berlin, Heidelberg, 2003.

\bibitem{VMKG} D.~Venturelli, S.~Mandr\`{a}, S.~Knysh,  B.~O$'$Gorman, R.~Biswas, V.~Smelyanskiy, {\it Quantum Optimization of Fully Connected Spin Glasses}, Phys. Rev. X 5, 031040 (2015)  

\bibitem{wojtaszczyk86}
P.~Wojtaszczyk, {\it Every separable Banach space containing $c_0$ has an \textit{RUC} system}, Longhorn Notes. Texas Funct. Anal. Seminar, 1985--1986, Univ. Texas, Austin, TX, 1986, 37--39.

\bibitem{ZBDE} S.~Zbinden, A.~B\"{a}rtschi, H.~Djidjev, S.~Eidenbenz, {\it Embedding Algorithms for Quantum Annealers with Chimera and Pegasus Connection Topologies}, High Performance Computing, Lecture Notes in Computer Science. Vol. 12151, pp. 187--206. 

\end{thebibliography}
\end{document}